\numberwithin{equation}{section}
\numberwithin{figure}{section}
\theoremstyle{plain}
\newtheorem{thm}{\protect\theoremname}[section]
  \theoremstyle{plain}
  \newtheorem{prop}[thm]{\protect\propositionname}
  \theoremstyle{plain}
  \newtheorem{lem}[thm]{\protect\lemmaname}
  \theoremstyle{remark}
  \newtheorem{rem}[thm]{\protect\remarkname}
  \theoremstyle{plain}
  \newtheorem{cor}[thm]{\protect\corollaryname}
  \theoremstyle{definition}
  \newtheorem{example}[thm]{\protect\examplename}
  \theoremstyle{definition}
  \newtheorem{problem}[thm]{\protect\problemname}
  \theoremstyle{plain}
  \newtheorem*{thm*}{\protect\theoremname}
\renewcommand{\section}{%
\@startsection{section}{1}%
  \z@{.7\linespacing\@plus\linespacing}{.5\linespacing}%
  {\normalfont\scshape\centering\bfseries}}
\renewcommand{\subsection}{%
\@startsection{subsection}{2}%
  \z@{.5\linespacing\@plus.7\linespacing}{.5\linespacing}%
  {\normalfont\bfseries}}
  \providecommand{\corollaryname}{Corollary}
  \providecommand{\examplename}{Example}
  \providecommand{\lemmaname}{Lemma}
  \providecommand{\problemname}{Problem}
  \providecommand{\propositionname}{Proposition}
  \providecommand{\remarkname}{Remark}
  \providecommand{\theoremname}{Theorem}
\providecommand{\theoremname}{Theorem}
\begin{document}
\subjclass[2010]{47L60, 47A25, 47B25, 35F15, 42C10. }

\title{Momentum Operators in the Unit Square}

\author{Steen Pedersen and Feng Tian}

\address{(Steen Pedersen) Department of Mathematics, Wright State University,
Dayton, OH 45435, U.S.A. }

\email{steen@math.wright.edu }

\urladdr{http://www.wright.edu/\textasciitilde{}steen.pedersen/}

\address{(Feng Tian) Department of Mathematics, Wright State University, Dayton,
OH 45435, U.S.A. }

\email{feng.tian@wright.edu }

\urladdr{http://www.wright.edu/\textasciitilde{}feng.tian/}
\begin{abstract}
We investigate the skew-adjoint extensions of a partial derivative
operator acting in the direction of one of the sides a unit square.
We investigate the unitary equivalence of such extensions and the
spectra of such extensions. It follows from our results, that such
extensions need not have discete spectrum. We apply our techniques
to the problem of finding commuting skew-adjoint extensions of the
partial derivative operators acting in the directions of the sides
of the unit square. 

While our results are most easily stated for the unit square, they
are established for a larger class of domains, including certain fractal
domains. 
\end{abstract}

\keywords{Unbounded operators, selfadjoint extensions, deficiency indices,
boundary values for linear first-order PDE, Fourier series, momentum
operator, Fourier transform, Fourier multiplier operator, spectral
sets, spectral pairs, tilings, square.}

\maketitle
\tableofcontents{}

\section{Introduction\label{sec:Introduction}}

Consider $P_{\min}:=\tfrac{1}{i2\pi}\tfrac{d}{dx}$ acting in $C_{c}^{\infty}\left(\left[0,1\right]\right).$
This operator is symmetric and its selfadjoint extensions are in one-to-one
correspondence with the complex numbers $e\left(\theta\right):=e^{i2\pi\theta},$
$0\leq\theta<1.$ The selfadjoint extension $P_{\theta}$ corresponding
to $e\left(\theta\right)$ has domain
\[
\mathrm{dom}\left(P_{\theta}\right):=\left\{ f\in L^{2}\left(\left[0,1\right]\right)\mid f'\in L^{2}\left(\left[0,1\right]\right),f(1)=e(\theta)f(0)\right\} 
\]
and $P_{\theta}f=\tfrac{1}{i2\pi}f',$ for $f$ in $\mathrm{dom}\left(P_{\theta}\right),$
the derivative is in the distribution sense. The spectrum of $P_{\theta}$
is the set $\theta+\mathbb{Z}:=\left\{ \theta+m\mid m\in\mathbb{Z}\right\} .$
See, for example, \cite{ReSi75}. In particular, $P_{\theta}$ is
unitary equivalent to $P_{\theta}+1$ and $P_{\theta}$ is not unitary
equivalent to $P_{\theta'}$ unless $\theta=\theta'.$ In this paper
we extend this analysis to $\tfrac{1}{i2\pi}\tfrac{\partial}{\partial x}$
acting in the unit square $\left[0,1\right]^{2}.$ We apply our techniques
to investigate the characterization of commuting selfadjoint realizations
(extensions) of $\tfrac{1}{i2\pi}\tfrac{\partial}{\partial x}$ and
$\tfrac{1}{i2\pi}\tfrac{\partial}{\partial y}$ in the infinite strip
$\left[0,1\right]\times\mathbb{R},$ the square $\left[0,1\right]^{2},$
and in $\left[0,1\right]\times C,$ for certain fractal sets $C$. 

More precisely, we consider operators in the Hilbert space $L^{2}\left([0,1]^{2}\right)$
of square integrable functions $\left[0,1\right]^{2}\to\mathbb{C}$
equipped with the inner product 
\[
\left\langle f,g\right\rangle =\int_{0}^{1}\int_{0}^{1}\overline{f(x,y)}g(x,y)dxdy.
\]
The operator $P_{\min}=\frac{1}{i2\pi}\partial_{x}$ with domain $\mathrm{dom}\left(P_{\min}\right)=C_{c}^{\infty}\left([0,1]^{2}\right)$
is \emph{symmetric}, that is 
\[
\left\langle P_{\min}f,g\right\rangle =\left\langle f,P_{\min}g\right\rangle 
\]
for all $f,g$ in $C_{c}^{\infty}\left([0,1]^{2}\right).$ The adjoint
of $P_{\min}$ is $P_{\min}^{*}=P_{\mathrm{max}}=\frac{1}{i2\pi}\partial_{x},$
acting in the distribution sense, with domain
\[
\mathrm{dom}\left(P_{\mathrm{max}}\right)=\left\{ f\in L^{2}\left(\left[0,1\right]^{2}\right)\mid\partial_{x}f\in L^{2}\left(\left[0,1\right]^{2}\right)\right\} .
\]
Hence, for $k=0,1,$ the map $f\to f(k,\cdot)$ maps $\mathrm{dom}\left(P_{\mathrm{max}}\right)$
onto $L^{2}\left([0,1]\right)$ and 
\[
\left\langle P_{\mathrm{max}}f,g\right\rangle -\left\langle f,P_{\mathrm{max}}g\right\rangle =i2\pi\left(\int_{0}^{1}\overline{f(1,y)}g(1,y)dy-\int_{0}^{1}\overline{f(0,y)}g(0,y)dy\right),
\]
for all $f$ and $g$ in the domain of $P_{\mathrm{max}}.$ The selfadjoint
extensions of $P_{\min}$ are in one-to-one correspondence with the
unitary operators $V$ acting in $L^{2}([0,1]):$ The selfadjoint
extension $P_{V}$ of $P_{\min}$ corresponding to the unitary $V$
is the restriction of $P_{\mathrm{max}}$ whose domain $\mathrm{dom}\left(P_{V}\right)$
is the functions $f$ in $\mathrm{dom}\left(P_{\mathrm{max}}\right)$
that satisfies the boundary condition 
\begin{equation}
f(1,\cdot)=Vf(0,\cdot).\label{sec-2-eq:x-boundary-condition}
\end{equation}
We will call $V$ a \emph{boundary} unitary. 

In the interval case $P_{\theta}$ and $P_{\theta'}$ are unitary
equivalent if and only if $\theta=\theta'.$ The analogue for the
unit square is: 
\begin{thm}
\label{thm:1.1}Let $U,V:L^{2}([0,1])\rightarrow L^{2}([0,1])$ be
two boundary unitary operators. Then the corresponding selfadjoint
extensions $P_{U}$ and $P_{V}$ of $P_{\min}$ are unitary equivalent
if and only if $U$ and $V$ are unitary equivalent. 
\end{thm}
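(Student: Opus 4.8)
The proof separates into an easy ``if'' direction and a substantive ``only if'' direction; the latter amounts to showing that the complete unitary (Hahn--Hellinger) invariant of the selfadjoint operator $P_{V}$ determines, and is determined by, that of the boundary unitary $V$. For the easy implication, suppose $U=W^{*}VW$ for a unitary $W$ on $L^{2}([0,1])$, and put $\Phi:=I\otimes W$ on $L^{2}([0,1]^{2})=L^{2}([0,1]_{x})\otimes L^{2}([0,1]_{y})$, with $W$ acting in the $y$ variable. Since $W$ commutes with $\partial_{x}$ and with the boundary traces $f\mapsto f(k,\cdot)$, the map $\Phi$ preserves $\mathrm{dom}(P_{\max})$ and sends the condition $f(1,\cdot)=Uf(0,\cdot)$ to $g(1,\cdot)=WUW^{*}g(0,\cdot)=Vg(0,\cdot)$; hence $\Phi\,\mathrm{dom}(P_{U})=\mathrm{dom}(P_{V})$ and $\Phi P_{U}\Phi^{*}=P_{V}$.

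Before the converse, I record a clean but \emph{insufficient} invariant. The unitary group generated by $P_{V}$ is translation in $x$ with the twist $f(x+1,\cdot)=Vf(x,\cdot)$ dictated by the boundary condition; evaluating at time one yields the identity $e^{i2\pi P_{V}}=I\otimes V$. Thus if $P_{U}\cong P_{V}$, then by the functional calculus $I\otimes U\cong I\otimes V$. However, amplifying by the infinite-dimensional identity $I=I_{L^{2}([0,1]_{x})}$ collapses all multiplicities, so this recovers only that $U$ and $V$ have the same spectral measure class, not the same multiplicity function. To capture multiplicity one must pass to a finer decomposition.

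For the ``only if'' direction, use the spectral theorem: identifying the circle with $\{e(\theta):\theta\in[0,1)\}$, choose a unitary $W:L^{2}([0,1]_{y})\to\int_{[0,1)}^{\oplus}K_{\theta}\,d\mu(\theta)$ carrying $V$ to multiplication by $e(\theta)$, where the measure class $[\mu]$ and the multiplicity function $\theta\mapsto\dim K_{\theta}$ form the complete invariant of $V$. Applying $I\otimes W$ and fiberizing the boundary condition to $\tilde f(1,\theta)=e(\theta)\tilde f(0,\theta)$ for $\mu$-a.e.\ $\theta$, one obtains
\[
P_{V}\;\cong\;\int_{[0,1)}^{\oplus}\bigl(P_{\theta}\otimes I_{K_{\theta}}\bigr)\,d\mu(\theta).
\]
Since each $P_{\theta}$ has the simple spectrum $\theta+\mathbb{Z}$ with eigenfunctions $e^{i2\pi(\theta+m)x}$, the fiber $P_{\theta}\otimes I_{K_{\theta}}$ has eigenvalues $\theta+m$ ($m\in\mathbb{Z}$), each of multiplicity $\dim K_{\theta}$. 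Hence the complete invariant of $P_{V}$ is the $\mathbb{Z}$-periodization $\nu$ of $\mu$ on $\mathbb{R}$ together with the multiplicity function $M_{P_{V}}(\lambda)=\dim K_{\{\lambda\}}$, with $\{\lambda\}\in[0,1)$ the fractional part. Restricting $([\nu],M_{P_{V}})$ to the fundamental domain $[0,1)$ of $\mathbb{R}/\mathbb{Z}$ returns exactly $([\mu],\dim K_{\bullet})$. Therefore $P_{U}\cong P_{V}$ forces $U$ and $V$ to have equal measure class and multiplicity function, whence $U\cong V$ by the Hahn--Hellinger theorem for unitaries.

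The one genuinely technical point is justifying the direct-integral decomposition of the \emph{unbounded} operator $P_{V}$ over the (possibly continuous, variable-multiplicity) spectrum of $V$: one must assemble the eigenfunctions $e^{i2\pi(\theta+m)x}$ into a measurable field and verify that not merely the action $\tfrac{1}{i2\pi}\partial_{x}$ but also the \emph{domain} of $P_{V}$ decomposes fiberwise. Granting this, the spectral bookkeeping and the two appeals to Hahn--Hellinger multiplicity theory are routine.
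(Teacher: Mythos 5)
Your proof is correct, but it takes a genuinely different route from the paper's. The paper proves Theorem \ref{thm:1.1} via Theorem \ref{Sec-2-thm:Unitary-equivalence} by working directly with the intertwiner: given $P_{U}=W^{*}P_{V}W$, the identity $e\left(\left(\beta-\alpha\right)P_{V}\right)=I\otimes V$ (Lemma \ref{sec-2-lem:expH} --- exactly the invariant you correctly flag as insufficient on its own) is supplemented by the explicit group formula $e\left(aP_{V}\right)=T_{a}\otimes V^{\left\lfloor x+a\right\rfloor }$ of Proposition \ref{sec-2-prop:formula-unitary-group}, which forces $W$ to commute with the translations $T_{a}\otimes I_{H}$; hence $W$ sends each $e_{m}\otimes h$ to $e_{m}\otimes h_{m}$, commutes with the projection onto the $x$-independent functions, and its compression $i^{*}Wi$ to that subspace is a unitary conjugating $U$ to $V$ --- no direct integrals, no multiplicity theory, and no decomposition of the unbounded operator at all. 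Your route instead computes the complete Hahn--Hellinger invariant of $P_{V}$ (periodized measure class plus periodized multiplicity function), which is essentially the content the paper develops only later, in Theorem \ref{Sec-4-thm:Spectral-thm-for-P} and Theorem \ref{Sec-4-thm:Sum-decomp-of-P}; what this buys is that Theorems \ref{thm:1.2} and \ref{thm:1.3} fall out of the same decomposition for free, and your remark that $I\otimes U\cong I\otimes V$ alone loses multiplicity is a genuinely useful observation that the paper's argument implicitly circumvents by exploiting the translation symmetry of the specific intertwiner. The technical point you flag --- fiberizing the \emph{domain} of $P_{V}$ --- is a real obligation, but it is fillable without touching domains: conjugating the unitary group, $\left(I\otimes W\right)e\left(aP_{V}\right)\left(I\otimes W\right)^{*}$ acts fiberwise as $T_{a}$ twisted by $e\left(\theta\right)^{\left\lfloor x+a\right\rfloor }$, which is visibly the unitary group of the direct integral of the $P_{\theta}\otimes I_{K_{\theta}}$, and Stone's theorem then identifies the generators; alternatively one can match resolvents using the Green's function of Proposition \ref{Sec-4-prop:green}. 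Your periodization bookkeeping is sound: distinct translates $\theta+m$ land in disjoint intervals $[m,m+1)$, so restricting the invariant of $P_{V}$ to the fundamental domain $[0,1)$ recovers that of $V$ exactly as you claim.
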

In the case of the interval the spectrum $P_{\theta}$ is discrete,
in fact equal to $\theta+\mathbb{Z}.$ But for the square the spectrum
of $P_{V}$ has a much richer structure. Theorem \ref{Sec-4-thm:Spectral-thm-for-P}
gives a description of the spectral measure associated with $P_{V}$
in terms of the spectral measure associated with $V.$ Theorems \ref{thm:1.2}
and \ref{thm:1.3} are consequences of Theorem \ref{Sec-4-thm:Spectral-thm-for-P}. 
\begin{thm}
\label{thm:1.2}Let $P_{V}$ be a selfadjoint extension of $P_{\min}$
associated with the boundary unitary operator $V:L^{2}([0,1])\rightarrow L^{2}([0,1])$.
Then $P_{V}+1$ is unitary equivalent to $P_{V}$.
\end{thm}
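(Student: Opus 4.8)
The plan is to exhibit an explicit unitary on $L^{2}([0,1]^{2})$ that intertwines $P_{V}$ and $P_{V}+1$, mimicking the interval case in which conjugation by $x\mapsto e^{i2\pi x}$ shifts the spectrum by one. Let $M$ denote the operator of multiplication by the function $(x,y)\mapsto e^{i2\pi x}$. Since this function has modulus one, $M$ is a unitary operator on $L^{2}([0,1]^{2})$, with inverse $M^{-1}$ given by multiplication by $e^{-i2\pi x}$. The whole proof will consist in checking that $M$ preserves $\mathrm{dom}\left(P_{V}\right)$ and that conjugation by $M$ adds $1$.

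First I would verify that $M$ maps $\mathrm{dom}\left(P_{V}\right)$ onto itself. For $f\in\mathrm{dom}\left(P_{\mathrm{max}}\right)$ the product rule (in the distribution sense) gives $\partial_{x}\bigl(e^{i2\pi x}f\bigr)=i2\pi e^{i2\pi x}f+e^{i2\pi x}\partial_{x}f\in L^{2}([0,1]^{2})$, so $Mf\in\mathrm{dom}\left(P_{\mathrm{max}}\right)$; the same computation applied to $M^{-1}$ shows that $M$ restricts to a bijection of $\mathrm{dom}\left(P_{\mathrm{max}}\right)$. The crucial point is that $e^{i2\pi x}$ is $1$-periodic in $x$, so $e^{i2\pi\cdot1}=e^{i2\pi\cdot0}=1$, whence $(Mf)(1,\cdot)=f(1,\cdot)$ and $(Mf)(0,\cdot)=f(0,\cdot)$. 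Therefore the boundary condition \eqref{sec-2-eq:x-boundary-condition}, namely $f(1,\cdot)=Vf(0,\cdot)$, is preserved verbatim under $M$, and $M$ maps $\mathrm{dom}\left(P_{V}\right)$ bijectively onto $\mathrm{dom}\left(P_{V}\right)$.

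Next I would compute the conjugation. For $f\in\mathrm{dom}\left(P_{V}\right)$,
\[
M^{-1}P_{V}Mf=e^{-i2\pi x}\tfrac{1}{i2\pi}\partial_{x}\bigl(e^{i2\pi x}f\bigr)=e^{-i2\pi x}\bigl(e^{i2\pi x}f+e^{i2\pi x}P_{V}f\bigr)=\left(P_{V}+1\right)f,
\]
the derivative again taken in the distribution sense. Combined with the domain identity from the previous step, this gives $M^{-1}P_{V}M=P_{V}+1$ as an identity of selfadjoint operators, which is exactly the asserted unitary equivalence.

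I do not expect a serious obstacle here; the only point requiring care is the interaction of $M$ with the boundary condition, and this is precisely where the periodicity $e^{i2\pi}=1$ does the work, just as in the interval case. (Alternatively, the statement follows from the description of the spectral measure of $P_{V}$ in Theorem \ref{Sec-4-thm:Spectral-thm-for-P}, since that measure is manifestly invariant under translation by $1$; but the direct conjugation argument above avoids the spectral machinery entirely and makes transparent why the deformation is harmless at the boundary.)
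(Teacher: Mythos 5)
Your argument is correct, and it takes a genuinely different route from the paper. The paper deduces Theorem \ref{thm:1.2} (in the general form $P_{V}+1/(\beta-\alpha)\simeq P_{V}$ on $L^{2}([\alpha,\beta])\otimes H$) from its spectral theorem, Theorem \ref{Sec-4-thm:Spectral-thm-for-P}: it defines the intertwining unitary on the spectral side, sending the component of $f$ at frequency $\tfrac{\lambda+m}{\beta-\alpha}$ to the same component at $\tfrac{\lambda+m+1}{\beta-\alpha}$, verifies the intertwining first at the level of the unitary groups $e\left(sP_{V}\right)$, and then differentiates at $s=0$. Amusingly, since $e_{\tfrac{\lambda+m+1}{\beta-\alpha}}(x)=e\bigl(\tfrac{x}{\beta-\alpha}\bigr)e_{\tfrac{\lambda+m}{\beta-\alpha}}(x)$, the paper's unitary \emph{is} your multiplication operator $M$ (with $e^{i2\pi x}$ replaced by $e\bigl(\tfrac{x}{\beta-\alpha}\bigr)$ in the general case); the difference lies entirely in how the conjugation identity is verified. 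Your verification is more elementary and self-contained: it needs no Stone formula or spectral resolution, only the product rule for distributional derivatives, the bijectivity of $M$ on $\mathrm{dom}\left(P_{\mathrm{max}}\right)$, and the $1$-periodicity of $e^{i2\pi x}$, which leaves the boundary condition \eqref{sec-2-eq:x-boundary-condition} untouched. The one point you pass over lightly is that the boundary traces $f(0,\cdot)$, $f(1,\cdot)$ are well-defined for $f\in\mathrm{dom}\left(P_{\mathrm{max}}\right)$ and that they transform as claimed under multiplication by a continuous function; this is supplied by the representation $f(x)=h+\int_{0}^{x}P_{\mathrm{max}}f(t)\,dt$ established in Appendix \ref{sec-A-:Selfadjoint-Extensions}, so it is a citation rather than a gap. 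What the paper's longer route buys is integration into its broader program: the same spectral machinery simultaneously yields Theorem \ref{thm:1.3} and the direct-sum decomposition of Theorem \ref{Sec-4-thm:Sum-decomp-of-P}, of which the shift-by-one equivalence is then a natural byproduct, whereas your gauge-transformation proof, while cleaner for this single statement, gives no information about the spectral measure itself.
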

A consequence of the following result is that, in contrast to the
interval case, the spectrum of $P_{V}$ need not be discrete. 
\begin{thm}
\label{thm:1.3}The spectrum of $P_{V}$ equals the set of $\lambda$
for which $e\left(\lambda\right)$ is in the spectrum of $V.$ 
\end{thm}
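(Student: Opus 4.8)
The plan is to bypass the spectral theorem and characterize the resolvent set of $P_{V}$ directly, by solving the first-order ODE in the $x$-variable hidden inside the equation $\left(P_{V}-\lambda\right)f=g$ and thereby reducing the invertibility of $P_{V}-\lambda$ to the invertibility of the bounded operator $V-e\left(\lambda\right)$ on $L^{2}\left([0,1]\right)$. Since $P_{V}$ is selfadjoint its spectrum is real, so it suffices to treat real $\lambda$. First I would fix $g\in L^{2}\left([0,1]^{2}\right)$ and, treating $y$ as a parameter, solve $\tfrac{1}{i2\pi}\partial_{x}f-\lambda f=g$ by variation of parameters, obtaining
\[
f(x,\cdot)=e\left(\lambda x\right)\Big(f(0,\cdot)+i2\pi\int_{0}^{x}e\left(-\lambda t\right)g(t,\cdot)\,dt\Big).
\]

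Evaluating this at $x=1$ and imposing the boundary condition \eqref{sec-2-eq:x-boundary-condition}, namely $f(1,\cdot)=Vf(0,\cdot)$, collapses the problem to the single operator equation
\[
\big(V-e\left(\lambda\right)\big)f(0,\cdot)=e\left(\lambda\right)G_{\lambda},\qquad G_{\lambda}:=i2\pi\int_{0}^{1}e\left(-\lambda t\right)g(t,\cdot)\,dt,
\]
for the unknown trace $f(0,\cdot)\in L^{2}\left([0,1]\right)$. From here the forward inclusion follows: if $e\left(\lambda\right)\notin\sp\left(V\right)$, then $V-e\left(\lambda\right)$ is boundedly invertible, so $f(0,\cdot)=e\left(\lambda\right)\big(V-e\left(\lambda\right)\big)^{-1}G_{\lambda}$ is determined and the displayed formula produces a unique $f$. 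I would then check that this $f$ lies in $\mathrm{dom}\left(P_{V}\right)$ — using $\partial_{x}f=i2\pi\left(\lambda f+g\right)\in L^{2}$ and that the traces match the boundary condition by construction — and that $g\mapsto f$ is bounded on $L^{2}\left([0,1]^{2}\right)$, so that $\lambda$ lies in the resolvent set.

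For the converse, suppose $e\left(\lambda\right)\in\sp\left(V\right)$. Since $V$ is unitary, $e\left(\lambda\right)$ is in its approximate point spectrum, so there are unit vectors $\phi_{n}\in L^{2}\left([0,1]\right)$ with $\left\Vert \left(V-e\left(\lambda\right)\right)\phi_{n}\right\Vert \to0$. Setting
\[
f_{n}(x,\cdot)=e\left(\lambda x\right)\big(\phi_{n}+x\,\psi_{n}\big),\qquad\psi_{n}:=e\left(-\lambda\right)\big(V-e\left(\lambda\right)\big)\phi_{n},
\]
the linear term $x\,\psi_{n}$ corrects the trace so that $f_{n}(1,\cdot)=Vf_{n}(0,\cdot)$, placing $f_{n}\in\mathrm{dom}\left(P_{V}\right)$. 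A direct computation gives $\left(P_{V}-\lambda\right)f_{n}=\tfrac{1}{i2\pi}e\left(\lambda x\right)\psi_{n}$, so $\left\Vert \left(P_{V}-\lambda\right)f_{n}\right\Vert =\tfrac{1}{2\pi}\left\Vert \psi_{n}\right\Vert \to0$, while $\left\Vert f_{n}\right\Vert \geq1-\tfrac{1}{\sqrt{3}}\left\Vert \psi_{n}\right\Vert $ stays bounded below. Hence $\lambda$ is in the approximate point spectrum of $P_{V}$, and combining the two directions yields $\sp\left(P_{V}\right)=\left\{ \lambda\mid e\left(\lambda\right)\in\sp\left(V\right)\right\} $.

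The main obstacle is the bookkeeping in the forward direction: one must confirm rigorously that the $f$ built from the variation-of-parameters formula genuinely belongs to $\mathrm{dom}\left(P_{\mathrm{max}}\right)$ with the correct trace identities, and that $g\mapsto f$ is bounded uniformly through the parameter $y$; the estimates follow from Cauchy--Schwarz applied to $G_{\lambda}$ together with boundedness of $\big(V-e\left(\lambda\right)\big)^{-1}$, but the measurability and regularity in this operator-valued setting require care. I note that the statement also follows from Theorem \ref{Sec-4-thm:Spectral-thm-for-P}: under the direct-integral diagonalization of $P_{V}$ as a field of interval operators $P_{\theta}$ over $\sp\left(V\right)$ with $e\left(\theta\right)=z$, each fiber has spectrum $e^{-1}\left(z\right)=\theta+\mathbb{Z}$, so the spectrum of the direct integral is exactly $e^{-1}\left(\sp\left(V\right)\right)$, recovering the claim.
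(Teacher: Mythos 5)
Your argument is correct, but it reaches the conclusion by a different route than the paper does. The paper obtains this statement as Corollary \ref{Sec-4-cor:The-spectrum-of-P_V} of the full spectral-resolution formula, Theorem \ref{Sec-4-thm:Spectral-thm-for-P}, which is itself proved by feeding the resolvent of Proposition \ref{Sec-4-prop:green} into Stone's formula and carrying out a Poisson-kernel approximate-identity limit. Your forward direction essentially reproduces the Green's-function computation of Proposition \ref{Sec-4-prop:green} (the paper even notes there that $g(\alpha)$ is uniquely solvable precisely when $e_{\beta-\alpha}(z)\notin\sp(V)$), but you run it at real $\lambda$ with $e(\lambda)\notin\sp(V)$ rather than at nonreal $z$; note that the uniqueness of your variation-of-parameters representation rests on elements of $\mathrm{dom}\left(P_{\mathrm{max}}\right)$ being absolutely continuous $H$-valued functions, which is Lemma \ref{sec-A-lem:the-adjoint-of-P}, and that once bijectivity of $P_{V}-\lambda$ is shown, boundedness of the inverse is automatic from closedness of $P_{V}$, so your uniform-boundedness worry dissolves. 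Your converse is the genuinely new ingredient relative to the paper: since $V$ is unitary its spectrum is approximate point spectrum, and your explicit Weyl sequence $f_{n}(x,\cdot)=e\left(\lambda x\right)\left(\phi_{n}+x\,\psi_{n}\right)$ checks out — the linear correction does give $f_{n}(1,\cdot)=Vf_{n}(0,\cdot)$, one computes $\left(P_{V}-\lambda\right)f_{n}=\tfrac{1}{i2\pi}e\left(\lambda x\right)\psi_{n}$, and the bounds $\left\Vert \left(P_{V}-\lambda\right)f_{n}\right\Vert =\tfrac{1}{2\pi}\left\Vert \psi_{n}\right\Vert \to0$ and $\left\Vert f_{n}\right\Vert \geq1-\tfrac{1}{\sqrt{3}}\left\Vert \psi_{n}\right\Vert$ are as you state. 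The trade-off: your proof is more elementary and self-contained, identifying the spectrum as a set without any spectral-measure machinery; the paper's heavier route buys the entire projection-valued measure of $P_{V}$, which is what powers Theorem \ref{thm:1.2}, Theorem \ref{Sec-4-thm:Sum-decomp-of-P}, and the spectral-pair analysis of Section \ref{Sec-5-sec:Spectral-Pairs}, none of which follow from the spectrum alone.
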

A pair $\left(\mu,\nu\right)$ of measures on $\mathbb{R}^{d}$ is
called a \emph{spectral pair}, if $F:f\to\widehat{f}\left(\lambda\right):=\int f(x)e\left(-\lambda x\right)d\mu(x)$
determines a unitary $F:L^{2}\left(\mu\right)\to L^{2}\left(\nu\right).$
In this form the notion was introduced in \cite{JP99}. The case where
$\mu$ is a the restriction of Lebesgue measure to a measurable set
$\Omega$ was studied in \cite{Ped87}, in this case the set $\Omega$
is called a \emph{spectral set}, provided $\left(\mu,\nu\right)$
is a spectral pair for some measure $\nu.$ The notion was introduced
in \cite{Fu74} in the case where $\Omega$ has finite Lebesgue measure.
A connected open set in $\mathbb{R}^{d}$ is a spectral set if and
only if there are commuting selfadjoint extensions of the partial
derivatives $\tfrac{1}{i2\pi}\partial_{x_{k}}\Big|_{C_{c}^{\infty}\left(\Omega\right)},$
$k=1,\ldots,d,$ in $L^{2}\left(\Omega\right).$ In the affirmative
case, the support of $\nu$ is the joint spectrum of the commuting
selfadjoint extensions. See, \cite{Fu74}, \cite{Jo82}, and \cite{Ped87}
for proofs of these claims. We use Theorem \ref{Sec-4-thm:Spectral-thm-for-P}
to characterize the boundary unitary operators that lead to commuting
extensions of the partial derivatives and we calculate the joint spectra
for the infinite strip, the unit square, and a fractal domain. This
was previously done for the unit square, by a different method, in
\cite{JP00}. 

While our primary interest is in the unit square, we find it convenient
to establish many of our results in a more abstract setting. This
also allows us to apply our techniques to the study of spectral sets.
Using 
\[
L^{2}\left(\left[0,1\right]^{2}\right)=L^{2}\left(\left[0,1\right]\right)\otimes L^{2}\left(\left[0,1\right]\right),
\]
we replace the second $L^{2}-$space by a generic Hilbert space and
we replace the interval $[0,1]$ by the generic interval $[\alpha,\beta].$ 

Fix real numbers $\alpha<\beta$ and a Hilbert space $H.$ Consider
the Hilbert space 
\[
\mathscr{H}:=L^{2}\left([\alpha,\beta],H\right)=L^{2}\left(\left[\alpha,\beta\right]\right)\otimes H
\]
of $L^{2}-$functions $\left[\alpha,\beta\right]\to H$ equipped with
the inner product 
\[
\left\langle f\mid g\right\rangle :=\int_{\alpha}^{\beta}\left\langle f(x)\mid g(x)\right\rangle dx,
\]
where $\left\langle f(x)\mid g(x)\right\rangle $ is the inner product
in $H.$ We will consider selfadjoint extensions of the operator $P_{0}$
determined by
\[
P_{0}f:=\tfrac{1}{i2\pi}f'
\]
with the domain
\[
\mathrm{dom}\left(P_{0}\right):=\left\{ f\in L^{2}\left([\alpha,\beta],H\right)\mid f'\in L^{2}\left([\alpha,\beta],H\right),f(\alpha)=f(\beta)=0\right\} .
\]
The selfadjoint extensions of $P_{0}$ are determined by boundary
conditions, more precisely, they are parametrized by the unitary operators
$U:H\to H.$ The selfadjoint extension $P_{U}$ corresponding to the
unitary $U$ is 
\begin{equation}
P_{U}f=\tfrac{1}{i2\pi}f'\label{eq:P_U}
\end{equation}
for 
\begin{equation}
f\in\mathrm{dom}\left(P_{U}\right):=\left\{ f\in L^{2}\left(\left[\alpha,\beta\right],H\right)\mid f'\in L^{2}\left(\left[\alpha,\beta\right],H\right),f(\beta)=Uf(\alpha)\right\} .\label{eq:dom-P_U}
\end{equation}
For more details on this correspondence, see Appendix \ref{sec-A-:Selfadjoint-Extensions}. 

Section \ref{sec-2-:Unitary Group} contains a formula for the unitary
group $e\left(aP_{U}\right).$ This formula is used to prove Theorem
\ref{thm:1.1}. In Section \ref{sec-3:Eigenvalues} we discuss eigenvalues
and eigenvectors of $P_{U}$ and we present some natural examples
where $P_{U}$ has a complete set of eigenvectors. In Section \ref{sec-4:A-Spectral-Theorem}
we establish the connection, Theorem \ref{Sec-4-thm:Spectral-thm-for-P},
between the projection-valued measures of $P_{U}$ and of $U.$ We
use this connection to establish Theorems \ref{thm:1.2} and \ref{thm:1.3}.
Section \ref{Sec-5-sec:Spectral-Pairs} includes the analysis of spectral
pairs discussed above. Appendix \ref{sec-A-:Selfadjoint-Extensions}
contains the details needed to establish that the collection $P_{U},$
$U$ unitary in $H$ is the collection of all self adjoint extensions
of $P_{0}.$ Appendix \ref{sec-B:Questions} contains some open problems. 

The papers \cite{Hir00} and \cite{Ree88} contains discussions of
momentum operators in the complements of simple compact sets. In fact,
any paper discussing the canonical commutation relations in proper
subsets of $\mathbb{R}^{d},$ for $d>1$ contains, at least implicitly,
material related to the present paper. For other recent work on momentum
operators we refer to \cite{Car99}, \cite{ES10}, \cite{Ex12}, \cite{FKW07},
\cite{JPT11-1}, \cite{JPT11-2},\cite{JPT12}, and \cite{JPT12a}. 

This paper is based on standard operator theory, the needed background
can be found in \cite{ReSi81,ReSi75}. Some recent text books containing
most, but not all, of what we need are \cite{dO09} and \cite{Gru09}.

\section{The Unitary Group\label{sec-2-:Unitary Group}}

Fix $\alpha<\beta.$ For a real number $r,$ let $\alpha\leq\left\langle r\right\rangle <\beta$
and $\left\lfloor r\right\rfloor \in\mathbb{Z}$ be such that $r=\left\langle r\right\rangle +\left\lfloor r\right\rfloor (\beta-\alpha).$
Note, $\left\langle r\right\rangle $ and $\left\lfloor r\right\rfloor $
are uniquely determined by these conditions. For a fixed real number
$a$, the transformation $\tau_{a}:x\to\left\langle x+a\right\rangle $
is a measure preserving transformation of $[\alpha,\beta],$ hence
$T_{a}f:=f\circ\tau_{a}$ is a unitary in $L^{2}([\alpha,\beta]).$ 

Corresponding to any selfadjoint extension $P_{V},$ there is strongly
continuous unitary one-parameter group $a\to e\left(aP_{V}\right),$
where
\[
e(r):=e^{i2\pi r}.
\]
The unitary group can, for example, be determined from $P_{V}$ by
an application of the spectral theorem. The result below establishes
an explicit formula for the action of $e\left(aP_{V}\right).$ Conversely,
$P_{V}$ can be obtained from $e\left(aP_{V}\right)$ by differentiating
$a\to e\left(aP_{V}\right)$ at $a=0.$ 
\begin{prop}
\label{sec-2-prop:formula-unitary-group} Let $V:H\to H$ be a unitary
and let $P_{V}$ be the corresponding selfadjoint extension of $P_{0}$
determined by (\ref{eq:P_U}) and (\ref{eq:dom-P_U}). The unitary
group $a\to e\left(aP_{V}\right)$ satisfies
\begin{equation}
e\left(aP_{V}\right)f(x)=\left(T_{a}\otimes V^{\left\lfloor x+a\right\rfloor }\right)f(x)=V^{\left\lfloor x+a\right\rfloor }f(\left\langle x+a\right\rangle )\label{sec-2-eq:unitary-group-formula}
\end{equation}
 for all $f$ in $L^{2}\left(\left[\alpha,\beta\right]\right)\otimes H,$
a.e. $x$ in $\left[\alpha,\beta\right],$ and all $a$ in $\mathbb{R}.$ \end{prop}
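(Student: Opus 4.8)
The plan is to define the operator $U(a)$ to be the right-hand side of \eqref{sec-2-eq:unitary-group-formula}, namely $U(a)f(x):=V^{\lfloor x+a\rfloor}f(\langle x+a\rangle)$, and then to prove that $\{U(a)\}_{a\in\mathbb{R}}$ is a strongly continuous one-parameter group of unitaries whose infinitesimal generator is $P_V$. By the uniqueness part of Stone's theorem this forces $U(a)=e(aP_V)$ for all $a$, which is the assertion. Throughout I write $L:=\beta-\alpha$ for the period.

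First I would check that each $U(a)$ is an isometry: since $\tau_a$ is measure preserving on $[\alpha,\beta]$ and each $V^{\lfloor x+a\rfloor}$ is a unitary of $H$, we get $\|U(a)f\|^2=\int_\alpha^\beta\|V^{\lfloor x+a\rfloor}f(\langle x+a\rangle)\|_H^2\,dx=\int_\alpha^\beta\|f(x)\|_H^2\,dx=\|f\|^2$; invertibility will come from the group law with inverse $U(-a)$. The key algebraic step is that group law, $U(a)U(b)=U(a+b)$. Expanding gives $U(a)U(b)f(x)=V^{\lfloor x+a\rfloor}V^{\lfloor\langle x+a\rangle+b\rfloor}f(\langle\langle x+a\rangle+b\rangle)$, and comparison with $U(a+b)f(x)=V^{\lfloor x+a+b\rfloor}f(\langle x+a+b\rangle)$ reduces the claim to the two cocycle relations $\langle\langle x+a\rangle+b\rangle=\langle x+a+b\rangle$ and $\lfloor x+a\rfloor+\lfloor\langle x+a\rangle+b\rfloor=\lfloor x+a+b\rfloor$. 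Both follow by writing $x+a=\langle x+a\rangle+\lfloor x+a\rfloor L$, so that $\langle x+a\rangle+b=(x+a+b)-\lfloor x+a\rfloor L$ differs from $x+a+b$ by the integer multiple $\lfloor x+a\rfloor L$ of the period; hence the two numbers have equal fractional parts and floors differing by exactly $\lfloor x+a\rfloor$.

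Strong continuity is routine: the $U(a)$ are uniformly bounded (isometries), and on the dense set of continuous $H$-valued functions the translate $\tau_a$ varies continuously while the factor $V^{\lfloor x+a\rfloor}$ differs from the identity only on a set of measure $|a|$ near an endpoint, contributing $O(\sqrt{|a|})$ to the $L^2$ norm. An $\varepsilon/3$ argument then yields $\|U(a)f-f\|\to0$ for every $f$.

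The main obstacle is the precise identification of the generator, where the boundary condition $f(\beta)=Vf(\alpha)$ must emerge. For small $a>0$ and $f\in C^1([\alpha,\beta],H)$ one has $U(a)f(x)=f(x+a)$ for $x\in[\alpha,\beta-a)$ and $U(a)f(x)=Vf(x+a-L)$ for $x\in[\beta-a,\beta)$. On the large interval the difference quotient $\tfrac1a(U(a)f-f)$ converges in $L^2$ to $f'$; on the short interval it equals $\tfrac1a(Vf(x+a-L)-f(x))$, which near $x=\beta$ behaves like $\tfrac1a(Vf(\alpha)-f(\beta))+O(1)$. Integrated over a set of measure $a$, its $L^2$ contribution tends to $0$ exactly when $Vf(\alpha)=f(\beta)$ and otherwise blows up like $|Vf(\alpha)-f(\beta)|/\sqrt{a}$. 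Hence $\lim_{a\to0}\tfrac{1}{i2\pi a}(U(a)f-f)=\tfrac{1}{i2\pi}f'=P_Vf$ in $L^2$ for every $C^1$ function satisfying the boundary condition, so the self-adjoint generator $A$ of $U$ (normalized by $U(a)=e(aA)$) extends $P_V$ on this set. Since such functions form a core for $P_V$ and $A$ is self-adjoint, two self-adjoint operators one of which extends the other must coincide, giving $A=P_V$ and completing the proof.
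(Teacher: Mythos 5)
Your proof is correct, and its skeleton coincides with the paper's: both define the candidate family $U(a)=T_{a}\otimes V^{\left\lfloor x+a\right\rfloor }$, verify unitarity and the group law via the same two cocycle identities $\left\langle \left\langle x+a\right\rangle +b\right\rangle =\left\langle x+a+b\right\rangle $ and $\left\lfloor \left\langle x+a\right\rangle +b\right\rfloor +\left\lfloor x+a\right\rfloor =\left\lfloor x+a+b\right\rfloor $, establish strong continuity by a density argument, and then invoke Stone's theorem together with the fact that a selfadjoint operator has no proper symmetric extension. Where you genuinely diverge is in identifying the generator $A$. The paper argues from above: from $P_{0}\subset Q$ it takes adjoints to get $Q\subset P_{0}^{*}=P_{\max}$ (Lemma \ref{sec-A-lem:the-adjoint-of-P}), then extracts $f(\beta)=Vf(\alpha)$ as a \emph{necessary} condition on every $f\in\mathrm{dom}(Q)$ by letting $x\to\beta$ in the difference quotient, so that $Q\subseteq P_{V}$ and maximality gives equality. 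You argue from below: you check that the difference quotient converges in $L^{2}$ for $C^{1}$ functions satisfying the boundary condition, so $A$ extends $P_{V}$ on that set, and conclude $A\supseteq P_{V}$ by closedness, hence $A=P_{V}$. Your short-interval computation on $[\beta-a,\beta)$ is right (and note the $\left\Vert Vf(\alpha)-f(\beta)\right\Vert /\sqrt{a}$ blow-up, while correct, is not needed in your direction of the argument). The one assertion you leave unproved is that the $C^{1}$ functions obeying $f(\beta)=Vf(\alpha)$ form a core for $P_{V}$; this is true but deserves a line within the paper's framework: given $f\in\mathrm{dom}\left(P_{V}\right)$, subtract $g(x)=\tfrac{\beta-x}{\beta-\alpha}f(\alpha)+\tfrac{x-\alpha}{\beta-\alpha}Vf(\alpha)$, which is smooth and satisfies the boundary condition, so $f-g\in\mathrm{dom}\left(P_{0}\right)$; by Lemma \ref{Sec-A-lem:P-is-closed} ($\overline{P_{\min}}=P_{0}$) approximate $f-g$ in graph norm from $C_{c}^{\infty}\left([\alpha,\beta]\right)\otimes H$ and add $g$ back. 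The trade-off is clear: the paper's route avoids the core claim but leans on the adjoint computation of Lemma \ref{sec-A-lem:the-adjoint-of-P}, while yours avoids the adjoint argument but needs the core. Finally, you treat only $a>0$ in the difference quotient; this is harmless, since for a unitary group $\tfrac{1}{-a}\left(U(-a)f-f\right)=U(-a)\tfrac{1}{a}\left(U(a)f-f\right)$, so existence of the one-sided limit implies the two-sided one.
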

\begin{proof}
This is well know, we include a proof for completeness. Let $U_{a}:=T_{a}\otimes V^{\left\lfloor x+a\right\rfloor }.$
We must show that $e(aP_{V})=U_{a}.$ We begin by checking that $U_{a}$
is a strongly continuous unitary one-parameter group. 

Let $I$ denote the identity in $L^{2}\left(\left[\alpha,\beta\right]\right).$
Then $I\otimes V^{\left\lfloor x+a\right\rfloor }$ is unitary because
\[
I\otimes V^{\left\lfloor x+a\right\rfloor }=\left(\begin{array}{cc}
I\otimes V^{\left\lfloor a\right\rfloor } & 0\\
0 & I\otimes V^{1+\left\lfloor a\right\rfloor }
\end{array}\right)
\]
with respect to the decomposition 
\[
L^{2}\left([0,1]\right)\otimes H=\left(L^{2}\left([\alpha,\beta-\left\langle a\right\rangle \right)\otimes H\right)\oplus\left(L^{2}\left([\beta-\left\langle a\right\rangle ,\beta\right)\otimes H\right)
\]
of $L^{2}\left([0,1]\right)\otimes H.$ Hence $U_{a}$ is unitary,
since $T_{a}$ is unitary. 

Next, we check that $U_{a},$ $a\in\mathbb{R}$ is a group action,
that is that $U_{a}U_{b}=U_{a+b},$ for all $a$ and $b$ in $\mathbb{R}.$
Consider $f(x)=g(x)h,$ where $g\in L^{2}\left(\left[\alpha,\beta\right]\right)$
and $h\in H,$ then 
\[
I\otimes V^{\left\lfloor x+a\right\rfloor }f(x)=g(x)V^{\left\lfloor x+a\right\rfloor }h.
\]
Hence, 
\[
U_{a}f(x)=g\left(\left\langle x+a\right\rangle \right)V^{\left\lfloor x+a\right\rfloor }h
\]
so 
\begin{eqnarray*}
U_{a}\left(U_{b}f\right)(x) & = & U_{b}\left(g\left(\left\langle x+a\right\rangle \right)\left(V^{\left\lfloor x+a\right\rfloor }h\right)\right)\\
 & = & g\left(\left\langle \left\langle x+a\right\rangle +b\right\rangle \right)V^{\left\lfloor \left\langle x+a\right\rangle +b\right\rfloor }\left(V^{\left\lfloor x+a\right\rfloor }h\right).
\end{eqnarray*}
Now $\left\langle \left\langle x+a\right\rangle +b\right\rangle =\left\langle x+a+b\right\rangle $
and $\left\lfloor \left\langle x+a\right\rangle +b\right\rfloor +\left\lfloor x+a\right\rfloor =\left\lfloor x+a+b\right\rfloor $,
hence $U_{a}U_{b}=U_{a+b}$ for all simple tensors $f(x)=g(x)h$ and
therefore for all $f$ in $L^{2}\left(\left[\alpha,\beta\right]\right)\otimes H.$ 

To see that $a\to U_{a}f$ is continuous. Consider $f\in L^{2}\left([\alpha,\beta-b)\right)\otimes H$
for some $0<b<\beta-\alpha.$ Then 
\[
U_{a}f(x)=f(x+a)
\]
for all $a<b.$ Hence $U_{a}f\to f$ as $a\to0.$ Since, $\bigcup_{0<b<\beta-\alpha}L^{2}\left([\alpha,\beta-b)\right)\otimes H$
is dense in $L^{2}\left(\left[\alpha,\beta\right]\right)\otimes H,$
we conclude that $U_{a}$ is strongly continuous. 

Since $U_{a}$ is a strongly continuous unitary group on $L^{2}\left(\left[\alpha,\beta\right]\right)\otimes H,$
there is a selfadjoint operator $Q$ on $L^{2}\left(\left[\alpha,\beta\right]\right)\otimes H$
such that $e(aQ)=U_{a}$ for all $a\in\mathbb{R}.$ Now 
\[
\tfrac{1}{i2\pi}Qf=\lim_{a\to0}\tfrac{1}{i2\pi a}\left(U_{a}f-f\right)
\]
and the domain of $Q$ is the set of all $f$ for which the limit
exists. If $f:[\alpha,\beta]\to H$ is compactly supported in $(\alpha,\beta)$,
then for sufficiently small $a$ we have 
\[
\tfrac{1}{i2\pi a}\left(U_{a}f-f\right)(x)=\tfrac{1}{i2\pi a}\left(f(x+a)-f(x)\right),
\]
by definition of $U_{a}.$ Consequently, $P_{0}\subset Q,$ meaning
that $P_{0}$ is a restriction of $Q.$ Taking the adjoint gives $Q\subset P_{0}^{*}.$
Hence, if $f\in\mathrm{dom}(Q),$ then $f\in\mathrm{dom}\left(P_{0}^{*}\right),$
hence, by Lemma \ref{sec-A-lem:the-adjoint-of-P}, $f'(x)$ exists
and $Qf(x)=\tfrac{1}{i2\pi}f'(x)$ for a.e. $x$ in $(\alpha,\beta).$
Let $0<a<\beta-\alpha.$ then we have 
\begin{equation}
\tfrac{1}{i2\pi a}\left(U_{a}f-f\right)(x)=\tfrac{1}{i2\pi a}\left(Vf(\left\langle x+a\right\rangle )-f(x)\right)\label{sec-2-eq:QP}
\end{equation}
when $\beta-a<x<\beta,$ by definition of $U_{a}.$ As $a\to0,$ we
have $x\to\beta$, $\left\langle x+a\right\rangle \to\alpha,$ so
$f(x)\to f(\beta),$ and $Vf(\left\langle x+a\right\rangle )\to Vf(\alpha).$
Thus, (\ref{sec-2-eq:QP}) implies $f(\beta)=Vf(\alpha).$ It follows
that $\mathrm{dom}(Q)\subseteq\mathrm{dom}\left(P_{V}\right).$ Consequently,
$Q=P_{V}$ as we needed to show. \end{proof}
\begin{lem}
\label{sec-2-lem:expH}The unitary group $e\left(aP_{V}\right)$ in
$L^{2}\left(\left[\alpha,\beta\right]\right)\otimes H$ and the boundary
unitary $V$ in $H$ are related by 
\begin{equation}
e\left(\left(\beta-\alpha\right)P_{V}\right)=I\otimes V,\label{sec-2-eq:vb}
\end{equation}
where $I$ is the identity operator in $L^{2}\left(\left[\alpha,\beta\right]\right).$\end{lem}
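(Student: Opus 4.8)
The plan is to obtain this as an immediate corollary of the explicit formula \eqref{sec-2-eq:unitary-group-formula} from Proposition \ref{sec-2-prop:formula-unitary-group} by evaluating at the single value $a=\beta-\alpha$. The only real content is the arithmetic of the integer/fractional part decomposition $r=\left\langle r\right\rangle +\left\lfloor r\right\rfloor (\beta-\alpha)$ at this shift.

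First I would recall that, by the proposition,
\[
e\left(aP_{V}\right)f(x)=V^{\left\lfloor x+a\right\rfloor }f\left(\left\langle x+a\right\rangle \right)
\]
for a.e. $x\in[\alpha,\beta]$ and all $a\in\mathbb{R}$. The key step is then to compute $\left\langle x+a\right\rangle $ and $\left\lfloor x+a\right\rfloor $ when $a=\beta-\alpha$ and $x\in[\alpha,\beta)$. In that case $x+(\beta-\alpha)$ lies in $[\beta,\beta+(\beta-\alpha))$, and the defining relation $x+(\beta-\alpha)=\left\langle x+(\beta-\alpha)\right\rangle +\left\lfloor x+(\beta-\alpha)\right\rfloor (\beta-\alpha)$ with $\alpha\le\left\langle \cdot\right\rangle <\beta$ is solved uniquely by $\left\lfloor x+(\beta-\alpha)\right\rfloor =1$ and $\left\langle x+(\beta-\alpha)\right\rangle =x$. (Uniqueness here is guaranteed by the remark following the definition of $\langle\cdot\rangle$ and $\lfloor\cdot\rfloor$ in Section \ref{sec-2-:Unitary Group}.)

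Substituting these two values into the formula gives
\[
e\left((\beta-\alpha)P_{V}\right)f(x)=V^{1}f(x)=Vf(x)=\left(I\otimes V\right)f(x)
\]
for a.e. $x\in[\alpha,\beta]$, which is exactly \eqref{sec-2-eq:vb}; the last equality is just the definition of $I\otimes V$ acting on $L^{2}\left([\alpha,\beta]\right)\otimes H$, checked first on simple tensors $f=g\otimes h$ and extended by linearity and density. There is essentially no obstacle to overcome: once Proposition \ref{sec-2-prop:formula-unitary-group} is in hand, the lemma is a one-line specialization, the only point requiring any care being the verification that the floor is $1$ (rather than $0$) on the full interval $[\alpha,\beta)$, i.e. that the shift by exactly one period advances $\left\lfloor \cdot\right\rfloor $ by one while returning the fractional part to $x$.
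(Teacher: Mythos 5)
Your proof is correct and follows exactly the paper's own argument: the published proof is the one-liner ``set $a=\beta-\alpha$ in (\ref{sec-2-eq:unitary-group-formula}) and use that $T_{\beta-\alpha}=I$,'' and your verification that $\left\langle x+(\beta-\alpha)\right\rangle =x$ and $\left\lfloor x+(\beta-\alpha)\right\rfloor =1$ is precisely the arithmetic hidden in that statement. Nothing further is needed.
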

\begin{proof}
Set $a=\beta-\alpha$ in (\ref{sec-2-eq:unitary-group-formula}) and
use that $T_{\beta-\alpha}=I.$ 
\end{proof}
Theorem \ref{thm:1.1} is a consequence of 
\begin{thm}
\label{Sec-2-thm:Unitary-equivalence}Let $U,V:H\rightarrow H$ be
two boundary unitary operators. The corresponding selfadjoint extensions
$P_{U}$ and $P_{V}$ of $P_{0},$ in $L^{2}\left(\left[\alpha,\beta\right]\right)\otimes H,$
determined by (\ref{eq:P_U}) and (\ref{eq:dom-P_U}) are unitary
equivalent if and only if $U$ and $V$ are unitary equivalent. \end{thm}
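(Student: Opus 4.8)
The plan is to prove the two implications separately, using the unitary group formula of Proposition~\ref{sec-2-prop:formula-unitary-group} together with Lemma~\ref{sec-2-lem:expH}.

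For the easy implication, suppose $U$ and $V$ are unitarily equivalent, say $V=WUW^{*}$ for a unitary $W:H\to H$. I would take $\Phi:=I\otimes W$, acting pointwise by $(\Phi f)(x)=Wf(x)$ on $L^{2}([\alpha,\beta])\otimes H$. Since $W$ touches only the $H$-variable it commutes with $\tfrac{1}{i2\pi}\partial_{x}$, and the boundary condition transforms correctly: $(\Phi f)(\beta)=Wf(\beta)=WUf(\alpha)=VWf(\alpha)=V(\Phi f)(\alpha)$. Thus $\Phi$ carries $\mathrm{dom}(P_{U})$ onto $\mathrm{dom}(P_{V})$ and $\Phi P_{U}\Phi^{*}=P_{V}$, so $P_{U}$ and $P_{V}$ are unitarily equivalent.

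For the converse, suppose $\Phi P_{U}\Phi^{*}=P_{V}$ for some unitary $\Phi$. By the functional calculus this intertwines the unitary groups, $\Phi\,e(aP_{U})\,\Phi^{*}=e(aP_{V})$ for every $a\in\mathbb{R}$, and evaluating at $a=\beta-\alpha$ with Lemma~\ref{sec-2-lem:expH} gives $\Phi\,(I\otimes U)\,\Phi^{*}=I\otimes V$. It is tempting to read off $U\sim V$ here, and this is exactly the delicate point: amplification by the infinite-dimensional factor $L^{2}([\alpha,\beta])$ erases multiplicity, so $I\otimes U\sim I\otimes V$ forces only that $U$ and $V$ have mutually absolutely continuous spectral measures, not genuine unitary equivalence. \emph{Recovering the multiplicity function is the main obstacle}, and the information that supplies it is the full shape of the group in (\ref{sec-2-eq:unitary-group-formula}), not just its value at one time.

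To use it, I would diagonalise $V$ by the spectral theorem, realising $H$ as a direct integral $\int^{\oplus}H_{\theta}\,d\rho(\theta)$ over the circle (coordinate $\theta$, point $e(\theta)$) on which $V$ is multiplication by $e(\theta)$; the pair consisting of the measure class of $\rho$ and the multiplicity function $\theta\mapsto\dim H_{\theta}$ is a complete unitary invariant of $V$. Substituting this into (\ref{sec-2-eq:unitary-group-formula}) and decomposing each fibre along the eigenfunctions of the scalar operator $P_{e(\theta)}$, whose eigenvalues run through $\tfrac{1}{\beta-\alpha}(\theta+\mathbb{Z})$, produces a unitary carrying $P_{V}$ to multiplication by $\lambda$ on a direct integral over $\lambda\in\mathbb{R}$ whose fibre at $\lambda$ is the spectral fibre of $V$ at the circle point $e((\beta-\alpha)\lambda)$, taken against the periodisation $\tilde\rho$ of $\rho$. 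This yields the correspondence I actually need: the measure class of $P_{V}$ is the periodisation of that of $V$, and the multiplicity of $P_{V}$ at $\lambda$ equals the multiplicity of $V$ at $e((\beta-\alpha)\lambda)$. Since periodisation is injective on measure classes and the multiplicity function transfers point-by-point, the complete invariant of $P_{V}$ determines, and is determined by, that of $V$ (this is the spectral description recorded later in Theorem~\ref{Sec-4-thm:Spectral-thm-for-P}). Hence $P_{U}\sim P_{V}$ forces $U$ and $V$ to share measure class and multiplicity function, i.e.\ $U\sim V$, completing the harder direction.
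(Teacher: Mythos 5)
Your proof is correct, but your hard direction follows a genuinely different route from the paper's. The paper never touches spectral multiplicity theory: from $We\left(aP_{U}\right)=e\left(aP_{V}\right)W$ for all $a$, it combines the value at $a=\beta-\alpha$ (Lemma~\ref{sec-2-lem:expH}) with the full group formula (Proposition~\ref{sec-2-prop:formula-unitary-group}) to deduce that $W$ commutes with $T_{a}\otimes I_{H}$ for every $a$, hence that the projection $P$ onto the $x$-independent functions reduces $W$; compressing by the isometry $i:H\to L^{2}\left(\left[\alpha,\beta\right]\right)\otimes H$, $\left(ih\right)(x)=\left(\beta-\alpha\right)^{-1/2}h$, it then checks directly that $i^{*}Wi$ is a unitary satisfying $\left(i^{*}Wi\right)U=V\left(i^{*}Wi\right)$. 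That argument is elementary and valid for an arbitrary Hilbert space $H$. You instead diagonalise $V$ as a direct integral and read off the complete Hahn--Hellinger invariant of $P_{V}$ (periodised measure class, with multiplicity transported along $\lambda\mapsto e\left(\left(\beta-\alpha\right)\lambda\right)$), in effect proving the content of Theorem~\ref{Sec-4-thm:Spectral-thm-for-P} and Theorem~\ref{Sec-4-thm:Sum-decomp-of-P} first and then invoking the classification of unitaries; this buys more, since Theorems \ref{thm:1.2} and \ref{thm:1.3} drop out along the way, and your observation that stopping at $a=\beta-\alpha$ yields only $I\otimes U\sim I\otimes V$, i.e.\ mutual absolute continuity of $E_{U}$ and $E_{V}$ with multiplicity erased by the infinite amplification, is exactly right and identifies a pitfall the paper circumvents silently by exploiting the whole group. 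Two points need patching in your version: the direct-integral multiplicity machinery as you invoke it presupposes $H$ separable, whereas the theorem is stated for arbitrary $H$ (for the non-separable case you would need cardinal-valued multiplicity functions, or the paper's approach); and the fibrewise decomposition $P_{V}=\int^{\oplus}P_{e\left(\theta\right)}\otimes I_{H_{\theta}}\,d\rho\left(\theta\right)$ deserves a sentence of justification, most easily via the group itself --- in the fibre over $\theta$, formula (\ref{sec-2-eq:unitary-group-formula}) acts as $T_{a}$ times the scalar $e\left(\theta\left\lfloor x+a\right\rfloor \right)$, which is $e\left(aP_{e\left(\theta\right)}\right)$, so the generators decompose as claimed. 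With those two points made explicit, your argument is a complete and more informative, if heavier, proof.
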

\begin{proof}
Suppose $P_{U}$ and $P_{V}$ are unitary equivalent. Let $W$ be
a unitary such that $P_{U}=W^{*}P_{V}W$, then it follows from the
spectral theorem that
\begin{equation}
We\left(aH_{U}\right)=e\left(aH_{V}\right)W\label{sec-2-eq:W-e}
\end{equation}
for all real numbers $a$. Setting $a=\beta-\alpha$ in (\ref{Sec-2-eq:W})
and using Lemma \ref{sec-2-lem:expH} leads to
\begin{equation}
W\left(I\otimes U\right)=\left(I\otimes V\right)W.\label{eq:W-tensor}
\end{equation}
Where $I$ is the identity operator acting in $L^{2}\left(\left[\alpha,\beta\right]\right).$
By Proposition \ref{sec-2-prop:formula-unitary-group}, equation (\ref{sec-2-eq:W-e})
takes the form 
\begin{align}
W\left(T_{a}\otimes U^{\left\lfloor x+a\right\rfloor }\right) & =\left(T_{a}\otimes V^{\left\lfloor x+a\right\rfloor }\right)W\label{Sec-2-eq:W}
\end{align}
for all real numbers $a.$ Combining (\ref{Sec-2-eq:W}) and (\ref{eq:W-tensor})
we have 
\begin{align*}
\left(I\otimes V^{\left\lfloor x+a\right\rfloor }\right)W\left(T_{a}\otimes I_{H}\right) & =\left(I\otimes V^{\left\lfloor x+a\right\rfloor }\right)\left(T_{a}\otimes I_{H}\right)W,
\end{align*}
where $I_{H}$ is the identity in $H.$ Therefore 
\begin{align}
W\left(T_{a}\otimes I_{H}\right) & =\left(T_{a}\otimes I_{H}\right)W.\label{eq:W-T-tensor}
\end{align}
Let $e_{m}(x)=e(mx)$ for $x,m\in\mathbb{R}.$ Applying (\ref{eq:W-T-tensor})
to $f=e_{m}\otimes h,$ $m\in\mathbb{R},$ $h\in H$, we get 
\[
e(ma)W\left(e_{m}\otimes h\right)(x)=\left(W\left(e_{m}\otimes h\right)\right)(\left\langle x+a\right\rangle )
\]
for all $a.$ Consequently, there are $h_{m}$ in $H$ such that 
\begin{equation}
W\left(e_{m}\otimes h\right)=e_{m}\otimes h_{m}\label{eq:WWWW}
\end{equation}
for all $m.$ Let $P$ be the projection in $L^{2}\left([\alpha,\beta]\right)\otimes H$
onto the functions that are independent of $x$. Setting $m=0$ in
(\ref{eq:WWWW}) shows that the range of $P$ is invariant under $W.$
Hence,
\begin{eqnarray}
WP & = & PWP.\label{Sec-2-eq:WP=00003DPWP}
\end{eqnarray}
Taking the adjoint of (\ref{eq:W-T-tensor}) and repeating this argument
shows that 
\begin{equation}
W^{*}P=PW^{*}P.\label{eq:W*P=00003DPW*P}
\end{equation}
Combining (\ref{Sec-2-eq:WP=00003DPWP}) and (\ref{eq:W*P=00003DPW*P})
we get 
\begin{equation}
WP=PW\quad\text{and}\quad W^{*}P=PW^{*}.\label{eq:WP=00003DPW}
\end{equation}
Let $i:H\rightarrow L^{2}\left([\alpha,\beta]\right)\otimes H$ be
the isometric embedding determined by 
\[
\left(ig\right)\left(x\right)=\left(\beta-\alpha\right)^{-1/2}g,
\]
for all $x$ in $[\alpha,\beta].$ Then
\[
i^{*}f=\left(\beta-\alpha\right)^{-1/2}\int_{\alpha}^{\beta}f(x)\, dx
\]
for $f\in L^{2}\left([\alpha,\beta]\right)\otimes H.$ And, if $B:H\to H$
is a bounded operator, then 
\begin{equation}
\left(I\otimes B\right)i=iB.\label{eq:1}
\end{equation}
Replacing $B$ by $B^{*}$ in (\ref{eq:1}) and taking the adjoint
yields
\begin{equation}
i^{*}\left(I\otimes B\right)=Bi^{*}.\label{Sec-2-eq:2}
\end{equation}
Consequently, 
\begin{eqnarray*}
\left(i^{*}Wi\right)U & = & i^{*}W\left(iU\right)\\
 & = & i^{*}W\left(I\otimes U\right)i\,\,\,\,\,\,\,\,\,\,\,\,\,\,\,(\mbox{by }(\ref{eq:1}))\\
 & = & i^{*}\left(I\otimes V\right)Wi\,\,\,\,\,\,\,\,\,\,\,\,\,\,\,(\mbox{by }(\ref{eq:W-tensor}))\\
 & = & V\left(i^{*}Wi\right).\,\,\,\,\,\,\,\,\,\,\,\,\,\,\,\,\,\,\,\,\,\,\,\,(\mbox{by }(\ref{Sec-2-eq:2}))
\end{eqnarray*}
It remains to show $i^{*}Wi$ is unitary. It is easy to see that
$P=ii^{*}$ and $i^{*}i=I_{H}.$ Recall, $I_{H}$ is the identity
in $H.$ 

Using (\ref{eq:WP=00003DPW}), $i^{*}i=P,$ and $Pi=i$ simple calculations
show that 
\[
\left(i^{*}Wi\right)^{*}\left(i^{*}Wi\right)=I_{H}
\]
and 
\[
\left(i^{*}Wi\right)\left(i^{*}Wi\right)^{*}=I_{H}.
\]
Therefore, $i^{*}Wi$ is unitary, and $U$ is unitary equivalent to
$V$.

Conversely, suppose $U$ is unitary equivalent to $V.$ Let $W$ be
a unitary in $H$ such that $WU=VW.$ Then, by Proposition \ref{sec-2-prop:formula-unitary-group}
\begin{eqnarray*}
\left(I\otimes W\right)e\left(aP_{U}\right)f(x) & = & \left(I\otimes W\right)\left(T_{a}\otimes U^{\left\lfloor x+a\right\rfloor }\right)f(x)\\
 & = & \left(T_{a}\otimes V^{\left\lfloor x+a\right\rfloor }\right)\left(I\otimes W\right)f(x)\\
 & = & e\left(aP_{V}\right)\left(I\otimes W\right)f(x),
\end{eqnarray*}
for all $a\in\mathbb{R}.$ Consequently, $\left(I\otimes W\right)P_{U}=P_{V}\left(I\otimes W\right).$ 
\end{proof}

\section{Eigenvalues\label{sec-3:Eigenvalues}}

The first result in this section establishes a relationship between
the eigenvalues and eigenvectors of $V$ and the eigenvalues and eigenvectors
of $P_{V}$. We extend this result to include continuous spectrum
in Section \ref{sec-4:A-Spectral-Theorem}, see Theorem \ref{Sec-4-thm:Spectral-thm-for-P}.
\begin{prop}
\label{Sec-3-prop:Eigenvalues-P_V}Let $V:H\to H$ be a unitary and
let $P_{V}$ be the corresponding selfadjoint extension of $P_{0}$,
in $L^{2}\left([\alpha,\beta],H\right)$, determined by (\ref{eq:P_U})
and (\ref{eq:dom-P_U}). Then $\lambda$ is an eigenvalues of $P_{V}$
if and only if $e\left(\left(\beta-\alpha\right)\lambda\right)$ is
an eigenvalue of $V.$ In particular, if $\lambda$ is an eigenvalue
for $P_{V},$ so is $\lambda+\tfrac{m}{\beta-\alpha},$ for any integer
$m.$ Furthermore, $h_{j},$ $1\leq j<n+1$ is an orthogonal basis
for the eigenspace of $V$ corresponding to the eigenvalue $e\left(\left(\beta-\alpha\right)\lambda\right)$
if and only if $f_{j}(x)=e(\lambda x)h_{j}$ is an orthogonal basis
for the eigenspace of $P_{V}$ corresponding to the eigenvalue $\lambda.$ \end{prop}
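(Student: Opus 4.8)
The plan is to solve the eigenvalue equation $P_V f = \lambda f$ explicitly as a first-order linear ODE valued in $H$, read off the boundary condition to obtain the eigenvalue equation for $V$, and then verify that the assignment $h\mapsto f$ with $f(x)=e(\lambda x)h$ is an orthogonality-preserving linear bijection between the two eigenspaces.

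First I would analyze the equation $P_V f=\lambda f$. By (\ref{eq:P_U}) this reads $\tfrac{1}{i2\pi}f'=\lambda f$, i.e. $f'=i2\pi\lambda f$ in the distribution sense for $f\in\mathrm{dom}(P_V)$. Since $f\in\mathrm{dom}(P_V)\subseteq\mathrm{dom}(P_0^*)$, Lemma \ref{sec-A-lem:the-adjoint-of-P} guarantees that $f'$ exists a.e., so I would compute $\tfrac{d}{dx}\bigl(e(-\lambda x)f(x)\bigr)=0$ and conclude that $e(-\lambda x)f(x)$ equals a constant vector $h\in H$ a.e.; hence $f(x)=e(\lambda x)h$. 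The one genuinely technical point is that an $H$-valued function with vanishing derivative is constant, but this is standard. Because $P_V$ is selfadjoint, $\lambda$ is real, so the boundary condition $f(\beta)=Vf(\alpha)$ from (\ref{eq:dom-P_U}) becomes $e(\lambda\beta)h=e(\lambda\alpha)Vh$, that is $Vh=e\bigl((\beta-\alpha)\lambda\bigr)h$. This shows $f=e(\lambda\,\cdot)h$ is a nonzero eigenfunction of $P_V$ for $\lambda$ precisely when $h$ is a nonzero eigenvector of $V$ for $e\bigl((\beta-\alpha)\lambda\bigr)$, which settles both implications of the first assertion. For the converse direction concretely: given that $e\bigl((\beta-\alpha)\lambda\bigr)$ is an eigenvalue of $V$ with eigenvector $h$, the function $f(x)=e(\lambda x)h$ manifestly lies in $\mathrm{dom}(P_V)$ and satisfies $P_V f=\lambda f$.

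For the ``in particular'' statement I would simply use periodicity of $e(\cdot)$: for any integer $m$ one has $e\bigl((\beta-\alpha)(\lambda+\tfrac{m}{\beta-\alpha})\bigr)=e\bigl((\beta-\alpha)\lambda+m\bigr)=e\bigl((\beta-\alpha)\lambda\bigr)$ since $e(m)=1$. Thus $\lambda$ and $\lambda+\tfrac{m}{\beta-\alpha}$ correspond to the same eigenvalue of $V$, so one is an eigenvalue of $P_V$ if and only if the other is.

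Finally, for the eigenbasis claim, the map $h\mapsto f$ with $f(x)=e(\lambda x)h$ is linear and, by the first part, a bijection between the $e\bigl((\beta-\alpha)\lambda\bigr)$-eigenspace of $V$ and the $\lambda$-eigenspace of $P_V$. It remains only to check that it preserves orthogonality, which a direct computation supplies: for eigenvectors $h_i,h_j$ one has $\langle f_i\mid f_j\rangle=\int_\alpha^\beta\overline{e(\lambda x)}\,e(\lambda x)\,\langle h_i\mid h_j\rangle\,dx=(\beta-\alpha)\langle h_i\mid h_j\rangle$, using $|e(\lambda x)|=1$ (again because $\lambda$ is real). Hence $\{f_j\}$ is orthogonal exactly when $\{h_j\}$ is, and since the map is a linear bijection of the eigenspaces it carries an orthogonal basis of one onto an orthogonal basis of the other. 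I expect the only real obstacle to be the vector-valued ``zero derivative implies constant'' step; all the remaining computations are direct substitutions.
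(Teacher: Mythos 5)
Your proposal is correct and follows essentially the same route as the paper's proof: solve the first-order ODE $f'=i2\pi\lambda f$ to get $f(x)=e(\lambda x)h$, then translate the boundary condition $f(\beta)=Vf(\alpha)$ into $Vh=e\left(\left(\beta-\alpha\right)\lambda\right)h$, with the forward direction a direct verification. You also supply the details the paper explicitly leaves to the reader (the orthogonality computation $\left\langle f_{i}\mid f_{j}\right\rangle =\left(\beta-\alpha\right)\left\langle h_{i}\mid h_{j}\right\rangle$, the reality of $\lambda$, and the appeal to Lemma \ref{sec-A-lem:the-adjoint-of-P} for the a.e.\ derivative), which is a sound way to close those gaps.
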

\begin{proof}
Suppose $e\left(\left(\beta-\alpha\right)\lambda\right)$ is an eigenvalue
of $V$ and $h\in H$ is a corresponding eigenvector. Let 
\[
f(x):=e(\lambda x)h.
\]
Then, $Vf(\alpha)=e(\lambda\alpha)Vh=e(\lambda\beta)h=f(\beta),$
hence $f$ is in the domain of $P_{V}.$ Since 
\[
P_{V}f(x)=\frac{1}{i2\pi}\partial_{x}f(x)=\frac{1}{i2\pi}\partial_{x}e(\lambda x)h=\lambda e(\lambda x)h=\lambda f(x)
\]
we conclude that $\lambda$ is an eigenvalue of $P_{V}$

Conversely, suppose $\lambda$ is an eigenvalue for $P_{V}$ and $f$
is a corresponding eigenvector. Then $P_{V}f=\lambda f$ implies $\partial_{x}f=2\pi i\lambda f.$
Solving this differential equation gives $f(x)=e(\lambda x)h$ for
some $h\in H.$ Since $f$ is in the domain of $P_{V},$ it follows
from (\ref{eq:dom-P_U}) that $Vh=e\left(\left(\beta-\alpha\right)\lambda\right)h$.
Consequently, $e\left(\left(\beta-\alpha\right)\lambda\right)$ is
an eigenvalue for $V.$ 

We leave the details of the eigenvector claims to the reader. 
\end{proof}
In the remainder of this section we consider the case where $\alpha=0,$
$\beta=1,$ and $H=L^{2}\left(\left[0,1\right]\right).$ Hence $\mathscr{H}=L^{2}\left(\left[0,1\right]^{2}\right).$
For a measure preserving transformation $v:[0,1]\to[0,1]$ and a measurable
function $\theta:[0,1]\to\mathbb{R}$, let $V=V_{v,\theta}$ be the
unitary on $L^{2}\left([0,1]\right)$ determined by
\begin{equation}
Vg(y)=e\left(\theta(y)\right)g\left(v(y)\right).\label{Sec-3-eq:general-geometric-boundary}
\end{equation}
Let $v^{0}(y)=y$ and inductively $v^{n}=v\circ v^{n-1}$ for $n>0.$
If 
\[
\phi_{a}(x,y)=\left(\left\langle a+x\right\rangle ,v^{\left\lfloor x+a\right\rfloor }y\right),
\]
then it follows from (\ref{sec-2-eq:unitary-group-formula}) that

\begin{eqnarray*}
e\left(aP_{V}\right)f(x,y) & = & e\left(\left\lfloor x+a\right\rfloor \theta(y)\right)f\left(\left\langle a+x\right\rangle ,v^{\left\lfloor x+a\right\rfloor }(y)\right)\\
 & = & e\left(\left\lfloor x+a\right\rfloor \theta(y)\right)f\circ\phi_{a}(x,y)
\end{eqnarray*}
for $f$ in $L^{2}\left([0,1]^{2}\right).$ 
\begin{rem}[Geometric Boundary Conditions]
\label{Sec-3-remark:Geometric-Boundary-Condition}In the case of
the unit interval $[0,1],$ the selfadjoint momentum operators are
determined by the boundary condition $f(1)=e(\theta)f(0).$ Geometrically,
we can think of this as identifying the endpoints up to a phase shift.
A natural analogue of this for the unit square is the special case
$Vg(y)=e(\theta)g\left(\left\langle y+r\right\rangle \right),$ $r,\theta\in\mathbb{R},$
of (\ref{Sec-3-eq:general-geometric-boundary}), in this case the
spectrum of $V$ is well understood, see Example \ref{Sec-3-Example:rotations}.
A more general analogue of the interval case is $Vg(y)=e\left(\theta(y)\right)g\left(\left\langle y+r\right\rangle \right),$
for some measurable $\theta:[0,1]\to\mathbb{R}.$ In this case, the
spectral type of $V$ is pure and the multiplicity is uniform, see
\cite{Hel86}. The exact spectral type depends on the function $\theta,$
see, for example, \cite{ILM99} and the references therein. 
\end{rem}
We have the following corollary to Proposition \ref{Sec-3-prop:Eigenvalues-P_V}.
\begin{cor}
\label{Sec-3-cor:ergodic}$v$ is an ergodic transformation on $[0,1]$
if and only if $\phi_{a}$ is an ergodic action of $\mathbb{R}$ on
$[0,1]^{2}.$\end{cor}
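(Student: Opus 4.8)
The plan is to deduce this directly from Proposition \ref{Sec-3-prop:Eigenvalues-P_V} by recognizing $\phi_a$ as the underlying point transformation of the Koopman group of a momentum operator with $\theta\equiv0$. First I would set $\theta\equiv0$ and let $V_{0}:=V_{v,0}$, so that $V_{0}g=g\circ v$ is the Koopman unitary of $v$ (it is unitary precisely because $v$ is an invertible measure-preserving transformation). Substituting $\theta\equiv0$ into the displayed formula for $e(aP_{V})$ makes the phase $e\left(\left\lfloor x+a\right\rfloor \theta(y)\right)$ equal to $1$, so that $e(aP_{V_{0}})f=f\circ\phi_{a}$; that is, the strongly continuous unitary group $e(aP_{V_{0}})$ furnished by Proposition \ref{sec-2-prop:formula-unitary-group} is exactly the Koopman representation of the flow $\phi_{a}$. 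Since each $\phi_{a}$ preserves Lebesgue measure on $[0,1]^{2}$ (the first coordinate map preserves Lebesgue measure on $[0,1]$ and, for $x$ fixed, the second is an iterate of $v$; conclude by Fubini), $\phi_{a}$ is a genuine measure-preserving $\mathbb{R}$-action, and ergodicity of $\phi_{a}$ is equivalent to the statement that the only $F\in L^{2}([0,1]^{2})$ fixed by every $e(aP_{V_{0}})$ are the constants.

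Next I would identify this fixed space. By the spectral theorem, $e(aP_{V_{0}})F=F$ for all $a\in\mathbb{R}$ holds if and only if $F$ lies in the eigenspace of $P_{V_{0}}$ at eigenvalue $0$. Applying Proposition \ref{Sec-3-prop:Eigenvalues-P_V} with $\lambda=0$ and $\beta-\alpha=1$, this eigenspace consists exactly of the functions $F(x,y)=e(0\cdot x)h(y)=h(y)$ with $h$ in the eigenspace of $V_{0}$ at $e(0)=1$. In other words, the $\phi_{a}$-invariant $L^{2}$ functions are precisely the functions independent of $x$ whose $y$-profile $h$ satisfies $V_{0}h=h$, i.e. $h\circ v=h$.

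Finally I would chain the equivalences: $v$ is ergodic if and only if the only $v$-invariant $L^{2}$ functions are constant, which by the previous step holds if and only if the fixed space of $e(aP_{V_{0}})$ is the constants, which holds if and only if $\phi_{a}$ is ergodic. The easy inclusion — a nonconstant $v$-invariant $h$ yields the nonconstant $\phi_{a}$-invariant $F(x,y)=h(y)$, since $F\circ\phi_{a}(x,y)=h(v^{\left\lfloor x+a\right\rfloor }y)=h(y)$ — gives the converse direction without any of the operator machinery and is worth recording separately as a sanity check. The only genuine obstacle is bookkeeping rather than mathematics: one must match the measure-theoretic definition of ergodicity of the flow $\phi_{a}$ with the fixed-point (equivalently, $L^{2}$) formulation, and one must verify that passing to $\theta\equiv0$ legitimately replaces the twisted unitary $e(aP_{V})$ by the honest Koopman operator $f\mapsto f\circ\phi_{a}$ — after which Proposition \ref{Sec-3-prop:Eigenvalues-P_V} does all the work.
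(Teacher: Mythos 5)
Your proposal is correct and takes essentially the same approach as the paper: the paper's own proof likewise sets $\theta\equiv0$ and applies Proposition \ref{Sec-3-prop:Eigenvalues-P_V} to equate ergodicity of $v$ (the eigenvalue $1$ of $V$ having multiplicity one) with ergodicity of $\phi_{a}$ (the joint fixed space of the Koopman group $e\left(aP_{V}\right)f=f\circ\phi_{a}$ consisting of the constants). You merely spell out the bookkeeping the paper leaves implicit --- measure preservation of $\phi_{a}$ and the identification of the joint fixed space with the eigenspace of $P_{V}$ at $\lambda=0$.
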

\begin{proof}
Let $\theta(y)=0$ for all $y.$ It follows from Proposition \ref{Sec-3-prop:Eigenvalues-P_V}
that $1$ is an eigenvalue for $V$ with multiplicity one if and only
if $1$ is an eigenvalue for $e\left(aP_{V}\right)$ with multiplicity
one.
\end{proof}
A special case of Proposition \ref{Sec-3-prop:Eigenvalues-P_V} is:
\begin{cor}
\label{Sec-3-cor:Diagonal-V}Fix $r_{n}$ in $[0,1[.$ Let $V$ be
determined by $Ve_{n}=e(r_{n})e_{n},$ then the set 
\[
\bigcup_{n\in\mathbb{Z}}\left(r_{n}+\mathbb{Z}\right)
\]
equals the set of eigenvalues for $P_{V}.$ 
\end{cor}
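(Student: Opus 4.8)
The plan is to apply Proposition \ref{Sec-3-prop:Eigenvalues-P_V} directly in the present special case $\alpha=0$, $\beta=1$, where $\beta-\alpha=1$ and $H=L^{2}([0,1])$. In this case the eigenvalue criterion of that proposition reads simply: $\lambda$ is an eigenvalue of $P_{V}$ if and only if $e(\lambda)$ is an eigenvalue of $V$. Thus the entire problem reduces to identifying the point spectrum of the diagonal unitary $V$ and then unwinding the periodicity of $e(\cdot)$.

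First I would determine the eigenvalues of $V$. Since $\{e_{n}\}_{n\in\mathbb{Z}}$ is an orthonormal basis of $H$ and $Ve_{n}=e(r_{n})e_{n}$, each $e(r_{n})$ is evidently an eigenvalue of $V$ with eigenvector $e_{n}$. To check there are no others, I would suppose $Vh=\mu h$ with $h=\sum_{n}c_{n}e_{n}\neq 0$; comparing coefficients in the basis gives $c_{n}\left(e(r_{n})-\mu\right)=0$ for every $n$, and since $h\neq 0$ forces some $c_{n}\neq 0$, one concludes $\mu=e(r_{n})$ for that index. Hence the set of eigenvalues of $V$ is exactly $\{e(r_{n})\mid n\in\mathbb{Z}\}$.

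Next I would translate the condition $e(\lambda)=e(r_{n})$ into a statement about $\lambda$ itself. Because $e(s)=e(t)$ holds precisely when $s-t\in\mathbb{Z}$, the equality $e(\lambda)=e(r_{n})$ is equivalent to $\lambda\in r_{n}+\mathbb{Z}$. Combining this with the preceding step, $\lambda$ is an eigenvalue of $P_{V}$ if and only if $\lambda\in r_{n}+\mathbb{Z}$ for some $n$, that is, if and only if $\lambda\in\bigcup_{n\in\mathbb{Z}}\left(r_{n}+\mathbb{Z}\right)$, which is exactly the asserted set.

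I do not expect a genuine obstacle here, since the statement is a direct specialization of Proposition \ref{Sec-3-prop:Eigenvalues-P_V}. The only point requiring a moment's care is confirming that the diagonal operator $V$ has no eigenvalues beyond its diagonal entries $e(r_{n})$; this is settled by the coefficient-comparison argument above. Everything else is a substitution into the proposition together with the elementary fact that $e(\cdot)$ has period one.
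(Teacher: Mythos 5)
Your proposal is correct and follows exactly the route the paper intends: the paper states this corollary as an immediate special case of Proposition \ref{Sec-3-prop:Eigenvalues-P_V} (with $\alpha=0$, $\beta=1$) and gives no further proof, while you supply the only routine details left implicit, namely that a diagonal unitary has point spectrum exactly $\left\{ e(r_{n})\mid n\in\mathbb{Z}\right\}$ (by coefficient comparison in the orthonormal basis) and that $e(\lambda)=e(r_{n})$ if and only if $\lambda\in r_{n}+\mathbb{Z}$. No gaps; this is the same argument, written out.
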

Rotations provide a natural class of examples for the Corollary \ref{Sec-3-cor:Diagonal-V}
and Corollary \ref{Sec-3-cor:ergodic}:
\begin{example}[Rotations]
\label{Sec-3-Example:rotations} Let $0\leq r<1$ be a real number.
Consider 
\[
\left(Vg\right)(y)=g\left(v(y)\right)
\]
where $v(y)=\left\langle y+r\right\rangle $ is the fractional part
of $y+r.$ Using Fourier series, 
\[
g(y)=\sum_{n\in\mathbb{Z}}\widehat{g}(n)e(ny)
\]
where $\widehat{g}(n)=\int_{0}^{1}g(y)e(-ny)dy,$ it follows that
\[
V\sum_{n\in\mathbb{Z}}\widehat{g}(n)e(ny)=\sum_{n\in\mathbb{Z}}\widehat{g}(n)e(nr)e(ny).
\]
In particular, $e_{n}(y)=e(ny)$ is an eigenfunction for $V$ corresponding
to the eigenvalue $e(nr).$ So, by Proposition \ref{Sec-3-prop:Eigenvalues-P_V},
the set of eigenvalues for $P_{V}$ is the set $r\mathbb{Z}+\mathbb{Z}=\left\{ ra+b\mid a,b\in\mathbb{Z}\right\} .$
Compared to the previous example we have $r_{n}=\left\langle nr\right\rangle ,$
the fractional part of $nr.$ This is used in Remark \ref{Sec-5:Remark-Square-geometric}.

If $r$ is irrational, then $v$ is ergodic. Clearly, $r_{m}\neq r_{n}$
for all $m\neq n,$ so each eigenvalue for $P_{V}$ has multiplicity
one. Furthermore, it is well known that the sequence $r_{n}$ is uniformly
distributed in the interval $[0,1].$ See e.g., \cite{KN74}.

If $r$ is rational, the set $\left\{ r_{n}\mid n\in\mathbb{Z}\right\} $
is finite and each eigenvalue of $P_{V}$ has infinite multiplicity. 
\begin{example}
If $V_{k}g=g\circ v_{k},$ where $v_{1}(y)=1-y$ and $v_{2}(y)=\tau_{1/2}(y)=\left\langle y+1/2\right\rangle $
is the fractional part of $y+1/2,$ then $V_{1}$ and $V_{2}$ are
unitary equivalent. Hence, $P_{V_{1}}$ and $P_{V_{2}}$ are unitary
equivalent, by Theorem \ref{Sec-2-thm:Unitary-equivalence}. However,
dynamically $v_{1}$ is a reflection and $v_{2}$ is a translation.
\textcolor{red}{}
\end{example}
\end{example}

\section{A Spectral Theorem\label{sec-4:A-Spectral-Theorem}}

In this section we obtain a formula for the spectral resolution $P_{V}$
in terms of the the spectral resolution of the boundary unitary $V.$
This is essentially contained in Section \ref{sec-3:Eigenvalues}
for the set of eigenvalues. We begin working toward the spectral representation
of $P_{V}$, when there is continuous spectrum, by finding the Green's
function and using it to find a formula for the resolvent of $P_{V}.$ 
\begin{prop}
\label{Sec-4-prop:green}Consider a boundary unitary $V:H\rightarrow H$
and the corresponding selfadjoint extension $P_{V}$ of $P_{0}$,
in $L^{2}\left(\left[\alpha,\beta\right]\right)\otimes H,$ determined
by (\ref{eq:P_U}) and (\ref{eq:dom-P_U}). For all $f\in L^{2}\left(\left[\alpha,\beta\right],H\right)$,
and $z\in\mathbb{C}\backslash\mathbb{R}$, 
\begin{equation}
\left(z-P_{V}\right)^{-1}f\left(x,\cdot\right)=\int_{\alpha}^{\beta}G\left(x,s,z\right)f\left(s,\cdot\right)ds\label{Sec-4-eq:resv}
\end{equation}
where the Green's function $G\left(x,s,z\right)$, is given by 
\begin{equation}
G\left(x,s,z\right)=\begin{cases}
i2\pi\left(\left(1-Ve_{\beta-\alpha}\left(-z\right)\right)^{-1}-1\right)e\left(z\left(x-s\right)\right) & \alpha\leq s<x\leq\beta\\
i2\pi\left(1-Ve_{\beta-\alpha}\left(-z\right)\right)^{-1}e\left(z\left(x-s\right)\right) & \alpha\leq x<s\leq\beta
\end{cases}\label{eq:gr}
\end{equation}
for all $z\in\mathbb{C}\backslash\mathbb{R}$. Where $e_{\lambda}(z):=e(\lambda z)=e^{i2\pi\lambda z}.$ \end{prop}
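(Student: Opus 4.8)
The plan is to solve the resolvent equation $\left(z-P_{V}\right)u=f$ as a first-order linear ODE in the $H$-valued function $u$, subject to the operator boundary condition, and to read off the Green's function. Fix $z\in\mathbb{C}\backslash\mathbb{R}$ and $f\in L^{2}\left(\left[\alpha,\beta\right],H\right)$. Since $P_{V}$ is selfadjoint, $z-P_{V}$ is a bijection of $\mathrm{dom}\left(P_{V}\right)$ onto $\mathscr{H}$, so there is a \emph{unique} $u\in\mathrm{dom}\left(P_{V}\right)$ with $\left(z-P_{V}\right)u=f$; by (\ref{eq:P_U}) this means $u'=i2\pi\left(zu-f\right)$ in the distribution sense, and by (\ref{eq:dom-P_U}) it means $u(\beta)=Vu(\alpha)$. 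Thus I only have to produce this $u$ explicitly.

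First I would solve the inhomogeneous equation by variation of parameters. The homogeneous solutions are $x\mapsto e(zx)c$, $c\in H$, so the ansatz $u(x)=e(zx)c(x)$ reduces the equation to $c'(x)=-i2\pi\, e(-zx)f(x)$, giving
\[
u(x)=e(zx)c-i2\pi\int_{\alpha}^{x}e\left(z(x-s)\right)f(s)\,ds,
\]
with $c\in H$ still free and the integral understood as a Bochner integral in $H$. By Lemma \ref{sec-A-lem:the-adjoint-of-P} this $u$ has $u'\in L^{2}$ and satisfies $\left(z-P_{V}\right)u=f$ for every choice of $c$; it remains only to fix $c$ from the boundary condition.

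Next I would impose $u(\beta)=Vu(\alpha)$. Since $u(\alpha)=e(z\alpha)c$ and $u(\beta)=e(z\beta)c-i2\pi\int_{\alpha}^{\beta}e\left(z(\beta-s)\right)f(s)\,ds$, the boundary condition becomes
\[
e(z\alpha)\bigl(e\left((\beta-\alpha)z\right)I-V\bigr)c=i2\pi\int_{\alpha}^{\beta}e\left(z(\beta-s)\right)f(s)\,ds.
\]
The key structural point is the factorization $e\left((\beta-\alpha)z\right)I-V=e\left((\beta-\alpha)z\right)\bigl(I-Ve_{\beta-\alpha}(-z)\bigr)$, together with its invertibility: because $V$ is unitary and $Ve_{\beta-\alpha}(-z)=e_{\beta-\alpha}(-z)V$, the spectrum of $Ve_{\beta-\alpha}(-z)$ lies on the circle of radius $\left|e_{\beta-\alpha}(-z)\right|=e^{2\pi(\beta-\alpha)\,\mathrm{Im}\,z}$, which differs from $1$ exactly when $z\notin\mathbb{R}$; hence $1$ is not in that spectrum and $\bigl(I-Ve_{\beta-\alpha}(-z)\bigr)^{-1}$ exists as a bounded operator on $H$. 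Solving for $c$ and collecting the scalar phases (the factors $e\left(-(\beta-\alpha)z\right)$, $e(-z\alpha)$, $e\left(z(\beta-s)\right)$ combine to $e(-zs)$) yields
\[
c=i2\pi\,\bigl(I-Ve_{\beta-\alpha}(-z)\bigr)^{-1}\int_{\alpha}^{\beta}e(-zs)f(s)\,ds.
\]

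Finally I would substitute this $c$ back into the formula for $u$, write $e(zx)c=i2\pi\bigl(I-Ve_{\beta-\alpha}(-z)\bigr)^{-1}\int_{\alpha}^{\beta}e\left(z(x-s)\right)f(s)\,ds$, split $\int_{\alpha}^{\beta}=\int_{\alpha}^{x}+\int_{x}^{\beta}$, and combine with the term $-i2\pi\int_{\alpha}^{x}$. On $\alpha\le s<x$ the two contributions merge into the coefficient $i2\pi\bigl(\left(I-Ve_{\beta-\alpha}(-z)\right)^{-1}-I\bigr)e\left(z(x-s)\right)$, while on $x<s\le\beta$ only the first term survives, giving $i2\pi\left(I-Ve_{\beta-\alpha}(-z)\right)^{-1}e\left(z(x-s)\right)$; these are precisely the two branches of $G(x,s,z)$ in (\ref{eq:gr}), which establishes (\ref{Sec-4-eq:resv}). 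I expect the only genuine obstacle to be the invertibility and boundedness of $I-Ve_{\beta-\alpha}(-z)$ for nonreal $z$, which is handled by the spectral argument above; the variation-of-parameters solution, the verification that the candidate lies in $\mathrm{dom}\left(P_{V}\right)$, and the bookkeeping of the exponential phases are all routine.
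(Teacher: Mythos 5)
Your proposal is correct and follows essentially the same route as the paper's proof: solve the resolvent equation $\left(z-P_{V}\right)u=f$ by the integrating factor $e\left(-zx\right)$, determine the free constant from the boundary condition $u(\beta)=Vu(\alpha)$, substitute back, and split $\int_{\alpha}^{\beta}=\int_{\alpha}^{x}+\int_{x}^{\beta}$ to read off the two branches of $G$ in (\ref{eq:gr}). The only difference is cosmetic: you spell out the invertibility of $I-Ve_{\beta-\alpha}\left(-z\right)$ via $\left|e_{\beta-\alpha}\left(-z\right)\right|=e^{2\pi\left(\beta-\alpha\right)\,\mathrm{Im}\,z}\neq1$ for $z\notin\mathbb{R}$, a point the paper leaves implicit when it notes that $e_{\beta-\alpha}\left(z\right)\notin\mathrm{sp}\left(V\right)$.
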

\begin{proof}
Let $f\in L^{2}\left(\left[\alpha,\beta\right],H\right)$, $z\in\mathbb{C}\backslash\mathbb{R}$,
and let 
\[
g:=\left(z-P_{V}\right){}^{-1}f\in\mathrm{dom}\left(P_{V}\right).
\]
That is, $g$ is the unique solution to the differential equation
\begin{equation}
zg\left(x\right)-\frac{1}{2\pi i}g'\left(x\right)=f\left(x\right)\label{eq:ode}
\end{equation}
satisfying the boundary condition 
\begin{equation}
Vg\left(\alpha\right)=g\left(\beta\right).\label{Sec-4-eq:bd1}
\end{equation}
Multiply both sides of (\ref{eq:ode}) by the integrating factor $e\left(-zx\right)$,
we get 
\[
\frac{d}{dx}\left(e\left(-zx\right)g\left(x\right)\right)=-i2\pi\, e\left(-zx\right)f\left(x\right)
\]
so that 
\begin{equation}
g\left(x\right)=e\left(z\left(x-\alpha\right)\right)g\left(\alpha\right)-i2\pi\int_{\alpha}^{x}e\left(z\left(x-s\right)\right)f\left(s\right)ds.\label{eq:rf}
\end{equation}
By the boundary condition (\ref{Sec-4-eq:bd1}), 
\[
\left(V-e_{\beta-\alpha}\left(z\right)\right)g\left(\alpha\right)=-i2\pi\int_{\alpha}^{\beta}e\left(z\left(\beta-s\right)\right)f\left(s\right)ds.
\]
Note that $g\left(\alpha\right)$ has a unique solution if and only
if $e_{\beta-\alpha}\left(z\right)\notin sp\left(V\right)$. In that
case, 
\begin{equation}
g\left(\alpha\right)=i2\pi\left(e_{\beta-\alpha}\left(z\right)-V\right)^{-1}\int_{\alpha}^{\beta}e\left(z\left(\beta-s\right)\right)f\left(s\right)ds.\label{Sec-4-eq:g-of-alpha}
\end{equation}
Substitute $g\left(\alpha\right)$ into (\ref{eq:rf}), and it follows
that 
\begin{eqnarray*}
\left(z-P_{V}\right)^{-1}f\left(x\right) & = & \int_{\alpha}^{x}i2\pi\left(\left(1-Ve_{\beta-\alpha}\left(-z\right)\right)^{-1}-1\right)e\left(z\left(x-s\right)\right)f\left(s\right)ds\\
 & + & \int_{x}^{\beta}i2\pi\left(1-Ve_{\beta-\alpha}\left(-z\right)\right)^{-1}e\left(z\left(x-s\right)\right)f\left(s\right)ds.
\end{eqnarray*}
Eq. (\ref{eq:gr}) follows from this.\end{proof}
\begin{rem}[Distribution Theory]
Fix $z\in\mathbb{C}\backslash\mathbb{R}$, and $s\in\left(\alpha,\beta\right)$.
The Green's function $G\left(\cdot,s,z\right)\in L^{2}\left(\left[\alpha,\beta\right],H\right)$
is the unique solution to the differential equation
\begin{equation}
zf-\frac{1}{i2\pi}f'=\delta_{s}.\label{Sec-4-eq:grd}
\end{equation}
Here, $\delta_{s}$ is the Dirac measure supported at $s\in\left(\alpha,\beta\right)$. 

For $x\neq s$, $G$ is the homogeneous solution, and so
\begin{equation}
G\left(x,s,z\right)=\begin{cases}
c_{1}e\left(z\left(x-s\right)\right) & \alpha\leq x<s\leq\beta\\
c_{2}e\left(z\left(x-s\right)\right) & \alpha\leq s<x\leq\beta
\end{cases}\label{eq:tmp1}
\end{equation}
where $c_{1}$ and $c_{2}$ are independent of $x$. 

Moreover, from the theory of distributions, (\ref{Sec-4-eq:grd})
implies that 
\begin{equation}
G\left(s-,s,z\right)-G\left(s+,s,z\right)=i2\pi;\label{Sec-4-eq:jmp}
\end{equation}
and by the boundary condition (\ref{Sec-4-eq:bd1}), 
\begin{equation}
G\left(\beta,s,z\right)=VG\left(\alpha,s,z\right).\label{eq:bd}
\end{equation}
Combining (\ref{eq:tmp1}), (\ref{Sec-4-eq:jmp}) and (\ref{eq:bd}),
we get 
\begin{eqnarray}
c_{1} & = & i2\pi\left(1-Ve_{\beta-\alpha}\left(-z\right)\right)^{-1}\label{eq:c1}\\
c_{2} & = & i2\pi\left(\left(1-Ve_{\beta-\alpha}\left(-z\right)\right)^{-1}-1\right)\label{eq:c2}
\end{eqnarray}
which in turn yields (\ref{eq:gr}).
\end{rem}
From the Green's function, one may reconstruct the projection-valued
measure associated with $P_{V}$. Our formula for this measure involves
the spectral resolution $E_{V}$ in $H$ of the boundary unitary,
written in the form 
\begin{equation}
V=\int_{[0,1)}e(\lambda)E_{V}(d\lambda)\label{Sec-4-eq:spV}
\end{equation}
and the projections 
\begin{equation}
E_{\tfrac{\lambda+m}{\beta-\alpha}}f:=\left(\beta-\alpha\right)^{-1}\left(\int_{\alpha}^{\beta}f(t)e\left(\tfrac{\lambda+m}{\beta-\alpha}t\right)dt\right)e_{\tfrac{\lambda+m}{\beta-\alpha}}\label{Sec-4-eq:E_T}
\end{equation}
in $L^{2}\left(\left[\alpha,\beta\right]\right),$ $m\in\mathbb{Z},$
onto the subspaces spanned by the unit vectors 
\[
f_{\lambda+m}(x):=\left(\beta-\alpha\right)^{-1/2}e\left(\tfrac{\lambda+m}{\beta-\alpha}x\right).
\]
We use $[0,1)$ in the spectral resolution of $V$, to indicate that,
if $1$ is an eigenvalue of $V,$ then the corresponding atom of $E_{V}$
is located at $0$ and not at $1.$ For any $\lambda\in\mathbb{R},$
the functions $f_{\lambda+m},m\in\mathbb{Z}$ form an orthonormal
basis for $L^{2}\left(\left[\alpha,\beta\right]\right)$ and $T_{a}f_{n}=e\left(\tfrac{na}{\beta-\alpha}\right)f_{n}$
for all $a\in\mathbb{R}$ and all $n\in\mathbb{Z}.$ 
\begin{thm}
\label{Sec-4-thm:Spectral-thm-for-P} Let $P_{V}$ be the selfadjoint
extension of $P_{0}$, in $L^{2}\left(\left[\alpha,\beta\right]\right)\otimes H,$
associated with the boundary unitary operator $V:H\rightarrow H$
by (\ref{eq:P_U}) and (\ref{eq:dom-P_U}). Suppose 
\begin{eqnarray*}
P_{V} & = & \int_{\mathbb{R}}\lambda F\left(d\lambda\right)
\end{eqnarray*}
where $F\left(d\lambda\right)$ is the projection-valued measure of
$P_{V}$. Then, for all $-\infty<\mu<\nu<+\infty$, 
\begin{equation}
\tfrac{1}{2}\left(F\left(\left(\mu,\nu\right)\right)+F\left(\left[\mu,\nu\right]\right)\right)=\int_{\left[0,1\right)}\sum_{m\in\mathbb{Z}}\chi_{\left(\mu,\nu\right)}\left(\tfrac{\lambda+m}{\beta-\alpha}\right)E_{\tfrac{\lambda+m}{\beta-\alpha}}\otimes E_{V}\left(d\lambda\right),\label{Sec-4-eq:E_P}
\end{equation}
where $E_{V}\left(d\lambda\right)$ is as in (\ref{Sec-4-eq:spV})
and $E_{\tfrac{\lambda+m}{\beta-\alpha}}$ is as in (\ref{Sec-4-eq:E_T}). \end{thm}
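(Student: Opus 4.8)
\emph{The plan} is to recover the projection-valued measure $F$ of $P_{V}$ from the resolvent of Proposition \ref{Sec-4-prop:green} by Stone's formula, and to read off the right-hand side of (\ref{Sec-4-eq:E_P}) from the poles of the Green's function. Write $b:=\beta-\alpha$ and $\mathcal{R}(z):=\left(z-P_{V}\right)^{-1}$. In the present sign convention Stone's formula reads
\[
\tfrac{1}{2}\bigl(F((\mu,\nu))+F([\mu,\nu])\bigr)=\lim_{\varepsilon\downarrow0}\frac{1}{2\pi i}\int_{\mu}^{\nu}\bigl(\mathcal{R}(t-i\varepsilon)-\mathcal{R}(t+i\varepsilon)\bigr)\,dt,
\]
the limit taken in the strong operator topology; the symmetrized left-hand side is precisely what Stone's formula produces, which is why it appears in the statement. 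So it suffices to compute the boundary values of $G(x,s,z)$ across the real axis and integrate.

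First I would isolate the only part of $G$ that is singular as $z\to\mathbb{R}$. The factor $e(z(x-s))$ is entire, and the summand $-1$ on the branch $\{s<x\}$ in (\ref{eq:gr}) contributes the entire term $-i2\pi\,e(z(x-s))$, whose boundary values from the two half-planes coincide and hence cancel in $\mathcal{R}(t-i\varepsilon)-\mathcal{R}(t+i\varepsilon)$; it may be discarded. All spectral content thus sits in $\left(1-Ve_{b}(-z)\right)^{-1}$, which, using the resolution (\ref{Sec-4-eq:spV}) and $e(\lambda)e_{b}(-z)=e(\lambda-bz)$, I would diagonalize as $\int_{[0,1)}\bigl(1-e(\lambda-bz)\bigr)^{-1}E_{V}(d\lambda)$, valid whenever $e_{b}(z)\notin\mathrm{sp}(V)$.

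Next comes the heart of the matter, the scalar boundary analysis. For fixed $\lambda\in[0,1)$ the function $z\mapsto\bigl(1-e(\lambda-bz)\bigr)^{-1}$ has simple poles exactly at the real points $z=t_{m}:=\tfrac{\lambda+m}{b}$, $m\in\mathbb{Z}$ (precisely the spectrum of Theorem \ref{thm:1.3}), with principal part $(i2\pi b)^{-1}(z-t_{m})^{-1}$ there. Multiplying by the factor $i2\pi\,e(z(x-s))$ from $G$ and letting $z\to t_{m}$, the residue produces the kernel $b^{-1}e\left(t_{m}(x-s)\right)=f_{\lambda+m}(x)\overline{f_{\lambda+m}(s)}$, which is exactly the integral kernel of the projection $E_{\frac{\lambda+m}{b}}$ of (\ref{Sec-4-eq:E_T}). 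Hence near each $t_{m}$ the difference $\mathcal{R}(t-i\varepsilon)-\mathcal{R}(t+i\varepsilon)$ behaves like $b^{-1}e\left(t_{m}(x-s)\right)$ times $2i\varepsilon/\bigl((t-t_{m})^{2}+\varepsilon^{2}\bigr)$; together with the prefactor $(2\pi i)^{-1}$ this is the kernel $b^{-1}e\left(t_{m}(x-s)\right)$ times the normalized Poisson kernel $\pi^{-1}\varepsilon/\bigl((t-t_{m})^{2}+\varepsilon^{2}\bigr)$, whose integral over $(\mu,\nu)$ tends to $1$ at an interior $t_{m}$ and to $\tfrac{1}{2}$ at an endpoint. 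Summing over $m$ gives, for each $\lambda$, the sum $\sum_{m}\chi_{(\mu,\nu)}\bigl(\tfrac{\lambda+m}{b}\bigr)E_{\frac{\lambda+m}{b}}$, the endpoint half-weights being absorbed by the symmetrization on the left; integrating against $E_{V}(d\lambda)$ yields (\ref{Sec-4-eq:E_P}).

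The main obstacle is rigor in the two interchanges in the final step: passing $\varepsilon\downarrow0$ through the operator-valued integral $\int_{[0,1)}\!\cdots E_{V}(d\lambda)$ and through the sum over $m$. I would handle this by testing against vectors in the dense set of finite sums $\sum e_{n}\otimes h$, reducing every expression to absolutely convergent scalar series and scalar Stone-formula limits, controlling the tails of the $m$-sum by the $L^{2}$-decay of Fourier coefficients and justifying the exchange by dominated convergence against the finite measure $\langle g\mid E_{V}(\cdot)g\rangle$; the error from replacing $\bigl(1-e(\lambda-bz)\bigr)^{-1}$ by its principal part near each pole is bounded uniformly in $\lambda$ and $\varepsilon$ and vanishes in the limit. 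An alternative that sidesteps the limiting analysis is to \emph{define} the right-hand side of (\ref{Sec-4-eq:E_P}) as a projection-valued measure $\widetilde{F}$ — legitimate because for each fixed $\lambda$ the $f_{\lambda+m}$, $m\in\mathbb{Z}$, form an orthonormal basis of $L^{2}([\alpha,\beta])$, so the $E_{\frac{\lambda+m}{b}}$ are mutually orthogonal and sum to $I$, while $E_{V}$ is a projection-valued measure — and then verify $\int_{\mathbb{R}}(z-t)^{-1}\widetilde{F}(dt)=\left(z-P_{V}\right)^{-1}$ by matching the Fourier series $b^{-1}\sum_{m}\bigl(z-\tfrac{\lambda+m}{b}\bigr)^{-1}e\bigl(\tfrac{\lambda+m}{b}(x-s)\bigr)$ to the Green's function (\ref{eq:gr}) through the classical Mittag--Leffler (cotangent) summation, whence $F=\widetilde{F}$ by uniqueness of the spectral resolution.
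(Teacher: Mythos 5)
Your proposal is correct and follows essentially the same route as the paper: Stone's formula applied to the explicit resolvent of Proposition \ref{Sec-4-prop:green}, discarding the entire part of the Green's function, diagonalizing $\left(1-Ve_{\beta-\alpha}\left(-z\right)\right)^{-1}$ through the spectral resolution $E_{V}$, and concluding by a Poisson-kernel approximate-identity limit that produces the projections $E_{\frac{\lambda+m}{\beta-\alpha}}$. The only differences are cosmetic: you localize at the poles via principal parts and use the half-plane Poisson kernel where the paper computes $M_{\overline{z}}-M_{z}$ exactly as the circle Poisson kernel, and your sketched alternative (defining the right-hand side as a projection-valued measure and matching resolvents via the Mittag--Leffler expansion) is a legitimate variant the paper does not pursue.
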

\begin{proof}
Let $-\infty<\mu<\nu<\infty.$ By Stone's formula \cite[pages 237 and 264]{ReSi81}
we have 
\begin{equation}
\tfrac{1}{2}\left(F\left(\left(\mu,\nu\right)\right)+F\left(\left[\mu,\nu\right]\right)\right)=\lim_{b\searrow0}\int_{\mathbb{R}}\chi_{(\mu,\nu)}(a)\left(\left(\overline{z}-P_{V}\right)^{-1}-\left(z-P_{V}\right)^{-1}\right)da,\label{Sec-4-eq:Stone-formula}
\end{equation}
where $z=z(a,b):=a+ib.$ It follows from (\ref{eq:rf}) and (\ref{Sec-4-eq:g-of-alpha})
that for $w\in\mathbb{C}\setminus\mathbb{R}$ and $f\in L^{2}\left(\left[\alpha,\beta\right],H\right)$
we have 
\begin{eqnarray*}
\left(w-P_{V}\right)^{-1}f(x) & = & i2\pi M_{w}\int_{\alpha}^{\beta}e\left(w\left(x-s\right)\right)f(s)ds\\
 &  & -i2\pi\int_{\alpha}^{x}e\left(w\left(x-s\right)\right)f(s)ds
\end{eqnarray*}
where $x\in[\alpha,\beta],$ 
\[
M_{w}:=\left(1-e\left(-Lw\right)V\right)^{-1},
\]
and $L:=\beta-\alpha.$ Hence, 
\[
\left(\left(\overline{z}-P_{V}\right)^{-1}-\left(z-P_{V}\right)^{-1}\right)f(x)=A+B
\]
where 
\begin{align*}
A & :=i2\pi M_{\overline{z}}\int_{\alpha}^{\beta}e\left(\overline{z}\left(x-s\right)\right)f(s)ds-i2\pi M_{z}\int_{\alpha}^{\beta}e\left(z\left(x-s\right)\right)f(s)ds\\
\end{align*}
and 
\[
B:=-i2\pi\int_{\alpha}^{x}\left(e\left(\overline{z}\left(x-s\right)\right)-e\left(z\left(x-s\right)\right)\right)f(s)ds.
\]
Now $\left|e\left(\overline{z}\left(x-s\right)\right)-e\left(z\left(x-s\right)\right)\right|=2\left|\sin\left(b\left(x-s\right)\right)\right|\leq2bL,$
so $B\to0$ as $b\to0$ uniformly in $a.$ Consequently, when calculating
\[
\tfrac{1}{2}\left(F[\mu,\nu]+F(\mu,\nu)\right)f(x)=\lim_{b\searrow0}\int_{\mathbb{R}}\chi_{(\mu,\nu)}(a)Ada.
\]
We may consider 
\[
i2\pi\left(M_{\overline{z}}-M_{z}\right)\int_{\alpha}^{\beta}e\left(z\left(x-s\right)\right)f(s)ds
\]
in place of $A$ because the norm of 
\[
\int_{\mu}^{\nu}i2\pi M_{z}\left(\int_{\alpha}^{\beta}e\left(z\left(x-s\right)\right)f(s)ds-\int_{\alpha}^{\beta}e\left(\overline{z}\left(x-s\right)\right)f(s)ds\right)da
\]
is bounded above by 
\[
\int_{\mathbb{R}}\chi_{(\mu,\nu)}(a)\left\Vert M_{z}\right\Vert 2bLda=\int_{\mathbb{R}}\chi_{(\mu,\nu)}(a)\left\Vert \left(1-e\left(-Lz\right)V\right)^{-1}\right\Vert 2bLda\underset{b\searrow0}{\longrightarrow}0.
\]
The limit $=0$ because 
\begin{eqnarray*}
\frac{b}{1-e\left(\lambda-L\left(a+ib\right)\right)} & = & \frac{b}{1-e^{2\pi b}e\left(\lambda-La\right)}\\
 & = & -\frac{be^{-2\pi b}e\left(La-\lambda\right)}{1-e^{-2\pi b}e\left(La-\lambda\right)}\\
 & = & -b\sum_{m=1}^{\infty}e^{-2\pi mb}e_{m}\left(La-\lambda\right)
\end{eqnarray*}
 and 
\begin{align*}
 & \left|\int_{\mu}^{\nu}-b\sum_{m=1}^{\infty}e^{-2\pi mb}e^{i2\pi(la-\lambda)m}da\right|\\
 & =\left|-b\sum_{m=1}^{\infty}e^{-2\pi mb}e^{-i2\pi\lambda m}\tfrac{1}{i2\pi Lm}\left(e^{i2\pi mL\nu}-e^{i2\pi mL\nu}\right)\right|\\
 & \leq\tfrac{b}{\pi L}\sum_{m=1}^{\infty}e^{-2\pi mb}\tfrac{1}{m}\\
 & =\tfrac{b}{\pi L}\log\left(1-e^{-2\pi b}\right)
\end{align*}
which $\to0$ (uniformly in ) as $b\searrow0.$ Hence, 
\begin{eqnarray}
 &  & \tfrac{1}{2}\left(F\left(\left(\mu,\nu\right)\right)+F\left(\left[\mu,\nu\right]\right)\right)f(x)\label{Sec-4-eq:intermediate}\\
 &  & =\lim_{b\searrow0}\int_{\mathbb{R}}i2\pi\chi_{(\mu,\nu)}(a)\left(M_{\overline{z}}-M_{z}\right)\int_{\alpha}^{\beta}e\left(z\left(x-s\right)\right)f(s)dsda\nonumber 
\end{eqnarray}
Using the spectral resolution (\ref{Sec-4-eq:spV}) of $V$ we have 

\begin{align*}
M_{\overline{z}}-M_{z} & =\left(1-e\left(-L\overline{z}\right)V\right)^{-1}-\left(1-e\left(-Lz\right)V\right)^{-1}\\
 & =\int_{[0,1)}\frac{1}{1-e(\lambda-La)e(iLb)}-\frac{1}{1-e(\lambda-La)e(-iLb)}E_{V}(d\lambda)\\
 & =\int_{[0,1)}Q\left(r_{b},\lambda-La\right)E_{V}(d\lambda),
\end{align*}
where $r_{b}:=e(-iLb)=e^{2\pi Lb}$ and 
\[
Q(r,\theta):=\frac{1-r^{2}}{1-2r\cos\left(2\pi\theta\right)+r^{2}}
\]
is the Poisson kernel for the unit circle. Consequently, 
\begin{align*}
 & \tfrac{1}{2}\left(F\left(\left(\mu,\nu\right)\right)+F\left(\left[\mu,\nu\right]\right)\right)f(x)\\
 & =\lim_{b\searrow0}\int_{\mathbb{R}}i2\pi\chi_{(\mu,\nu)}(a)\left(\int_{[0,1)}Q\left(r_{b},\lambda-La\right)E_{V}(d\lambda)\right)\int_{\alpha}^{\beta}e\left(z\left(x-s\right)\right)f(s)dsda\\
 & =\lim_{b\searrow0}\int_{\left[0,1\right)}\int_{0}^{\frac{1}{L}}\left(\sum_{m\in\mathbb{Z}}\chi_{\left(\mu,\nu\right)}\left(\tfrac{m}{L}+a\right)\right)Q\left(r_{b},\lambda-La\right)E_{V}\left(d\lambda\right)e(zx)\widehat{f}(z)da\\
 & =\int_{\left[0,1\right)}\left(\tfrac{1}{L}\sum_{m\in\mathbb{Z}}\chi_{\left(\mu,\nu\right)}\left(\tfrac{m+\lambda}{L}\right)\right)E_{V}\left(d\lambda\right)e\left(\tfrac{\lambda}{L}x\right)\widehat{f}\left(\tfrac{\lambda}{L}\right).
\end{align*}
Where $\widehat{f}(z):=\int_{\alpha}^{\beta}e\left(-zs\right)f(s)ds.$
The last equality follows from the Poisson  kernel $Q(r,\theta)$
being an approximate identity as $r\nearrow1$. 
\end{proof}

Theorem \ref{thm:1.3} is a special case of the following corollary
to Theorem \ref{Sec-4-thm:Spectral-thm-for-P}:
\begin{cor}
\label{Sec-4-cor:The-spectrum-of-P_V}The spectrum of $P_{V}$ equals
the set of $\lambda\in\mathbb{R}$ for which $e\left(\left(\beta-\alpha\right)\lambda\right)$
is in the spectrum of $V.$ \end{cor}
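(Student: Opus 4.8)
The plan is to read the spectrum of $P_{V}$ directly off the projection-valued measure $F$ furnished by Theorem \ref{Sec-4-thm:Spectral-thm-for-P}; write $L:=\beta-\alpha$. Recall that $\lambda_{0}\in\sp\left(P_{V}\right)$ exactly when $F\left(J\right)\neq0$ for every open interval $J\ni\lambda_{0}$. Abbreviating the left-hand side of (\ref{Sec-4-eq:E_P}) by $G\left(\mu,\nu\right):=\tfrac{1}{2}\bigl(F\left(\left(\mu,\nu\right)\right)+F\left(\left[\mu,\nu\right]\right)\bigr)$, positivity of $F$ together with $F\left(\left(\mu,\nu\right)\right)\leq F\left(\left[\mu,\nu\right]\right)$ gives $G\left(\mu,\nu\right)=0\Leftrightarrow F\left(\left[\mu,\nu\right]\right)=0$, and a short squeezing argument then shows that $\lambda_{0}\in\sp\left(P_{V}\right)$ if and only if $G\left(\mu,\nu\right)\neq0$ for all $\mu<\lambda_{0}<\nu$. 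So it is enough to decide when $G\left(\mu,\nu\right)$ vanishes.

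First I would extract the vanishing of $G\left(\mu,\nu\right)$ from the right-hand side of (\ref{Sec-4-eq:E_P}). The operators $E_{\left(\lambda+m\right)/L}$ are the rank-one projections of $L^{2}\left(\left[\alpha,\beta\right]\right)$ onto the $f_{\lambda+m}$, and in particular are never $0$; moreover the $f_{\lambda+m}$, $m\in\mathbb{Z}$, are mutually orthogonal, so for each fixed $\lambda$ the inner bracket is a positive operator that vanishes precisely when no integer $m$ satisfies $\tfrac{\lambda+m}{L}\in\left(\mu,\nu\right)$. Since $E_{V}$ is a projection-valued measure, the operator integral in (\ref{Sec-4-eq:E_P}) is then $0$ if and only if $E_{V}\left(S_{\left(\mu,\nu\right)}\right)=0$, where
\[
S_{\left(\mu,\nu\right)}:=\left\{ \lambda\in\left[0,1\right)\mid\tfrac{\lambda+m}{L}\in\left(\mu,\nu\right)\text{ for some }m\in\mathbb{Z}\right\}
\]
is the reduction modulo $1$ of the open interval $\left(L\mu,L\nu\right)$.

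Next I would identify $S_{\left(\mu,\nu\right)}$ geometrically. Let $\lambda_{*}\in\left[0,1\right)$ be the reduction of $L\lambda_{0}$ modulo $1$. Once $L\left(\nu-\mu\right)<1$, the set $S_{\left(\mu,\nu\right)}$ is an open arc of length $L\left(\nu-\mu\right)$ of the circle $\left[0,1\right)$ (with $0$ and $1$ identified) containing $\lambda_{*}$, and as $\left(\mu,\nu\right)$ shrinks to $\lambda_{0}$ these arcs form a neighborhood basis of $\lambda_{*}$ in the circle. Combining this with the previous step, $\lambda_{0}\in\sp\left(P_{V}\right)$ if and only if $E_{V}$ charges every arc-neighborhood of $\lambda_{*}$; by the standard description of the spectrum of a unitary in terms of its spectral resolution (\ref{Sec-4-eq:spV}) this says precisely that $e\left(L\lambda_{0}\right)=e\left(\lambda_{*}\right)$ lies in $\sp\left(V\right)$, which is the claim of the corollary.

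The one genuinely substantive point, and where I would spend the most care, is the passage in the second paragraph from ``$G\left(\mu,\nu\right)=0$'' to the scalar condition ``$E_{V}\left(S_{\left(\mu,\nu\right)}\right)=0$'': one must use that the $L^{2}\left(\left[\alpha,\beta\right]\right)$-factors are nonzero positive operators to conclude that the positive tensor integrand can only be annihilated through its $H$-component $E_{V}$, so that the integral vanishes iff the integrand vanishes $E_{V}$-almost everywhere. Everything else --- the squeezing at the endpoints $\mu,\nu$ and the reduction modulo $1$ (including the wrap-around at $0\sim1$) --- is routine bookkeeping.
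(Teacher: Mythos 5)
Your proposal is correct and is in essence the paper's own argument: the paper proves this corollary in two lines by observing that the support of $E_{V}$ is $\left\{ \lambda\in[0,1]\mid e\left(\lambda\right)\in\sp\left(V\right)\right\} $ and that the claim then follows from (\ref{Sec-4-eq:E_P}), i.e., by reading the spectrum of $P_{V}$ directly off the projection-valued measure of Theorem \ref{Sec-4-thm:Spectral-thm-for-P}, exactly as you do. What you add --- the endpoint squeezing in Stone's formula, the fact that the tensor integral vanishes if and only if $E_{V}$ annihilates the mod-$1$ reduction of $\left(\left(\beta-\alpha\right)\mu,\left(\beta-\alpha\right)\nu\right)$, and the wrap-around at $0\sim1$ --- is precisely the routine detail the paper leaves to the reader, and your handling of it is sound.
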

\begin{proof}
By (\ref{Sec-4-eq:spV}) the support of $E_{V}$ is the set $\left\{ \lambda\in[0,1]\mid e(\lambda)\in\mathrm{spectrum}(V)\right\} .$
Hence the result follows from (\ref{Sec-4-eq:E_P}).\end{proof}
\begin{example}
If the spectrum of $V$ equals the unit circle, then the spectrum
of $P_{V}$ equals the real line. In particular, the spectrum of $P_{V}$
need not be discrete. \end{example}
\begin{thm}
Let $P_{V}$ be the selfadjoint extension of $P_{0},$ in $\mathscr{H}:=L^{2}\left(\left[\alpha,\beta\right]\right)\otimes H,$
associated with the boundary unitary operator $V:H\rightarrow H$
by (\ref{eq:P_U}) and (\ref{eq:dom-P_U}). Then $P_{V}+1/\left(\beta-\alpha\right)$
is unitarily equivalent to $P_{V}$.\end{thm}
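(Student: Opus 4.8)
The plan is to prove the unitary equivalence directly, by exhibiting an explicit intertwining unitary rather than by invoking the spectral theorem; this is a gauge transformation by a unimodular exponential. Write $L:=\beta-\alpha$ and let $W$ be the multiplication operator on $\mathscr{H}=L^{2}\left(\left[\alpha,\beta\right]\right)\otimes H$ given by
\begin{equation*}
\left(Wf\right)(x):=e_{1/L}(x)\,f(x)=e\left(x/L\right)f(x),\qquad x\in[\alpha,\beta].
\end{equation*}
Since $\left|e\left(x/L\right)\right|=1$, the operator $W$ is unitary and $W^{*}$ is multiplication by $e_{-1/L}(x)=e\left(-x/L\right)$. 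Everything rests on the product rule
\begin{equation*}
\tfrac{1}{i2\pi}\left(e_{1/L}f\right)'(x)=\tfrac{1}{L}\,e_{1/L}(x)f(x)+e_{1/L}(x)\,\tfrac{1}{i2\pi}f'(x),
\end{equation*}
valid in the distribution sense whenever $f'\in L^{2}$, which says precisely that $P_{V}$ and $P_{V}+1/L$ are related by conjugation by $W$ at the level of the formal derivative.

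The first and only delicate step is to verify that $W$ carries $\mathrm{dom}\left(P_{V}\right)$ onto itself. By the product rule above, the condition $f'\in L^{2}$ is preserved by multiplication by the smooth unimodular factor $e_{1/L}$, and the same argument applied to $W^{*}$ shows $W$ is a bijection of $\left\{ f:f,f'\in L^{2}\right\}$. The boundary condition in (\ref{eq:dom-P_U}) is the point to check: because $e_{1/L}(\beta)/e_{1/L}(\alpha)=e\left((\beta-\alpha)/L\right)=e(1)=1$, the value $c:=e_{1/L}(\alpha)=e_{1/L}(\beta)$ is common to both endpoints, so $\left(Wf\right)(\beta)=c\,f(\beta)$ while $V\left(Wf\right)(\alpha)=c\,Vf(\alpha)$. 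Hence $f(\beta)=Vf(\alpha)$ holds if and only if $\left(Wf\right)(\beta)=V\left(Wf\right)(\alpha)$, so $W\left(\mathrm{dom}\left(P_{V}\right)\right)=\mathrm{dom}\left(P_{V}\right)$. With this settled, the intertwining is immediate: for $f\in\mathrm{dom}\left(P_{V}\right)=\mathrm{dom}\left(P_{V}+1/L\right)$ the product rule gives $P_{V}Wf=W\left(P_{V}+1/L\right)f$, so that $W^{*}P_{V}W=P_{V}+1/L=P_{V}+1/(\beta-\alpha)$, which is the claimed unitary equivalence.

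The computation is elementary once the domain has been handled, so the only place demanding care is the claim that $e_{1/L}$ \emph{exactly} preserves the boundary identification; this is where the normalization $1/(\beta-\alpha)$ is forced. Indeed, conjugating instead by $e_{t}$ intertwines $P_{V}$ with $P_{V}+t$ for every real $t$, but $e_{t}$ is constant on $\left\{ \alpha,\beta\right\}$, and hence preserves the condition $f(\beta)=Vf(\alpha)$, precisely when $e\left(t(\beta-\alpha)\right)=1$, that is when $t\in\tfrac{1}{\beta-\alpha}\mathbb{Z}$; the generator $t=1/(\beta-\alpha)$ is the statement at hand. I would also remark that this is consistent with Theorem \ref{Sec-4-thm:Spectral-thm-for-P}: conjugation by $W$ sends each basis vector $f_{\lambda+m}$ to $f_{\lambda+m+1}$, which is exactly the reindexing $m\mapsto m+1$ that leaves the right-hand side of (\ref{Sec-4-eq:E_P}) invariant under the shift $(\mu,\nu)\mapsto(\mu-1/L,\nu-1/L)$, giving an alternative spectral-theoretic derivation; the gauge argument above is however shorter and self-contained.
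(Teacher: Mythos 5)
Your proof is correct, and it takes a genuinely different route from the paper. The paper deduces the theorem from its spectral representation (Theorem \ref{Sec-4-thm:Spectral-thm-for-P}): it expands $f$ in the vectors $e_{\frac{\lambda+m}{\beta-\alpha}}\otimes E_{V}(d\lambda)$, defines the intertwiner as the shift $m\mapsto m+1$ on these spectral components, verifies the intertwining at the level of the unitary groups $e\left(sP_{V}\right)$, and differentiates at $s=0$. You instead exhibit the intertwiner directly as the gauge factor $W=M_{e_{1/L}}\otimes I_{H}$, $L=\beta-\alpha$, and verify everything by hand: the Leibniz rule with the smooth unimodular factor handles the formal intertwining, and the key point --- that $e_{1/L}(\alpha)=e_{1/L}(\beta)$, so the common scalar passes through $V$ and the boundary condition $f(\beta)=Vf(\alpha)$ of (\ref{eq:dom-P_U}) is exactly preserved --- is correctly identified and checked (boundary values make sense here since functions in the maximal domain are continuous, as established in the paper's Appendix \ref{sec-A-:Selfadjoint-Extensions}). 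In fact your $W$ and the paper's $U$ are literally the same operator, since multiplication by $e_{1/L}$ sends $f_{\lambda+m}$ to $f_{\lambda+m+1}$, as you note at the end; only the verification differs. What each approach buys: yours is shorter, self-contained, and independent of the machinery of Section \ref{sec-4:A-Spectral-Theorem} (Green's function, Stone's formula), and your closing observation that conjugation by $e_{t}$ respects the boundary condition precisely when $t\in\frac{1}{\beta-\alpha}\mathbb{Z}$ explains conceptually why the shift $1/(\beta-\alpha)$ is the right normalization; the paper's argument, by contrast, fits into the spectral-theoretic framework it has already built and flows naturally into the direct-sum decomposition of Theorem \ref{Sec-4-thm:Sum-decomp-of-P}, which your method does not by itself provide.
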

\begin{proof}
For all $f\in\mathscr{H}$, define 
\[
\hat{f}\left(\lambda+m,y\right):=\frac{1}{\beta-\alpha}\int_{\alpha}^{\beta}\overline{e\left(\tfrac{\lambda+m}{\beta-\alpha}t\right)}f(t,y)dt;
\]
See (\ref{Sec-4-eq:E_T}). By Theorem \ref{Sec-4-thm:Spectral-thm-for-P},
we have 
\[
f\left(x,y\right)=\int_{\left[0,1\right)}\sum_{m\in\mathbb{Z}}e_{\tfrac{\lambda+m}{\beta-\alpha}}\left(x\right)\otimes E_{V}\left(d\lambda\right)\hat{f}\left(\lambda+m,y\right).
\]
Moreover, for all $s\in\mathbb{R}$, 
\[
e\left(sP_{V}\right)f\left(x,y\right)=\int_{\left[0,1\right)}\sum_{m\in\mathbb{Z}}e\left(\tfrac{\lambda+m}{\beta-\alpha}s\right)e_{\tfrac{\lambda+m}{\beta-\alpha}}\left(x\right)\otimes E_{V}\left(d\lambda\right)\hat{f}\left(\lambda+m,y\right).
\]
Let $U$ be the unitary determined by 
\[
Uf\left(x,y\right)=\int_{\left[0,1\right)}\sum_{m\in\mathbb{Z}}e_{\tfrac{\lambda+m+1}{\beta-\alpha}}\left(x\right)\otimes E_{V}\left(d\lambda\right)\hat{f}\left(\lambda+m,y\right).
\]
A direct computation shows that
\[
e\left(sP_{V}\right)Uf=Ue\left(s\left(P_{V}+\frac{1}{\beta-\alpha}\right)\right)f.
\]
If, in addition, $f\in\mbox{dom}\left(P_{V}\right)$, then differentiating
the last equation at $s=0$ yields
\[
P_{V}Uf=U\left(P_{V}+\frac{1}{\beta-\alpha}\right)f.
\]
That is, $P_{V}U=U\left(P_{V}+1/\left(\beta-\alpha\right)\right)$.
This proves the theorem.
\end{proof}
For simplicity suppose $\beta-\alpha=1.$ The previous two results
suggests that $P_{V}$ is unitary equivalent to $\bigoplus_{k\in\mathbb{Z}}\left(L+k\right)$
for some bounded selfadjoint operator $0\leq L\leq1.$ Establishing
this as a consequence of Theorem \ref{Sec-4-thm:Spectral-thm-for-P}
provides an alternative proof of Theorem \ref{thm:1.2} and of Theorem
\ref{thm:1.3}. 
\begin{thm}
\label{Sec-4-thm:Sum-decomp-of-P}Suppose $\alpha=0$ and $\beta=1,$
then $\mathscr{H}=L^{2}\left[0,1\right]\otimes L^{2}\left[0,1\right].$
Let $P_{V}$ be the selfadjoint extension of $P_{0},$ in $\mathscr{H},$
associated with the boundary unitary operator $V:H\rightarrow H$
by (\ref{eq:P_U}) and (\ref{eq:dom-P_U}), 
\[
L_{k}:=P_{V}E_{P_{V}}\left(\left[k,k+1\right)\right),
\]
and $H_{k}:=E_{P_{V}}\left(\left[k,k+1\right)\right)\mathscr{H}.$
Clearly, $L_{k}$ is a selfadjoint operator acting in $H_{k},$ with
spectrum contained in $[k,k+1]$ and $k+1$ is not an eigenvalue of
$L_{k}.$ Furthermore, $P_{V}$ is unitary equivalent to $\bigoplus_{k\in\mathbb{Z}}\left(L_{0}+k\right).$ \end{thm}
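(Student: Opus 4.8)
The plan is to combine two ingredients. Since the half-open intervals $[k,k+1)$, $k\in\mathbb{Z}$, tile $\mathbb{R}$, the operator $P_V$ splits as an orthogonal direct sum over the corresponding spectral subspaces $H_k$, with $k$-th summand $L_k$. The preceding theorem (in the case $\beta-\alpha=1$) provides a unitary $U$ on $\mathscr{H}$ that intertwines $P_V$ with $P_V+1$; I would use $U$ to transport each block $H_k$ unitarily onto $H_{k+1}$ in a way that turns $L_k+1$ into $L_{k+1}$, so that $L_k$ becomes unitarily equivalent to $L_0+k$.

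First I would record the decomposition. Because $\{[k,k+1)\}_{k\in\mathbb{Z}}$ is a Borel partition of $\mathbb{R}$, the spectral projections $E_{P_V}([k,k+1))$ are mutually orthogonal and sum strongly to the identity; thus $\mathscr{H}=\bigoplus_{k\in\mathbb{Z}}H_k$, this decomposition reduces $P_V$, and $P_V=\bigoplus_{k\in\mathbb{Z}}L_k$. The asserted properties of $L_k$ (selfadjoint on $H_k$, spectrum contained in $[k,k+1]$, with $k+1$ not an eigenvalue) are immediate from the spectral calculus applied to the half-open interval $[k,k+1)$.

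Next, by the preceding theorem there is a unitary $U$ with $P_VU=U(P_V+1)$, equivalently $U^*P_VU=P_V+1$. Because unitarily equivalent selfadjoint operators have corresponding spectral measures, this gives $U^*E_{P_V}(B)U=E_{P_V+1}(B)=E_{P_V}(B-1)$ for every Borel set $B\subseteq\mathbb{R}$. Choosing $B=[k+1,k+2)$ yields $E_{P_V}([k+1,k+2))\,U=U\,E_{P_V}([k,k+1))$, so $U$ restricts to a unitary $U_k:=U|_{H_k}\colon H_k\to H_{k+1}$. Since $H_k$ and $H_{k+1}$ reduce $P_V$, restricting the intertwining relation to $H_k$ gives $L_{k+1}U_k=U_k(L_k+1)$, that is, $L_{k+1}$ is unitarily equivalent to $L_k+1$. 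Iterating this (and using $U^*$ for negative indices) produces unitaries $W_k\colon H_0\to H_k$ with $W_k(L_0+k)W_k^*=L_k$ for every $k\in\mathbb{Z}$, where $W_0=I$ and $W_{k+1}=U_kW_k$.

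Finally, the block-diagonal unitary $W:=\bigoplus_{k\in\mathbb{Z}}W_k$, regarded as a map from $\bigoplus_{k}H_0$ onto $\bigoplus_{k}H_k=\mathscr{H}$, intertwines $\bigoplus_{k}(L_0+k)$ with $P_V=\bigoplus_k L_k$, which is the assertion. The only point demanding care is the passage from the operator intertwining to the spectral-subspace shift and its restriction to the reducing subspaces $H_k$: one must check that $U$ preserves $\mathrm{dom}(P_V)$ (which it does, since $\mathrm{dom}(P_V+1)=\mathrm{dom}(P_V)$), so that the identity $L_{k+1}U_k=U_k(L_k+1)$ holds on the appropriate domain in $H_k$. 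Once this bookkeeping is in place the argument is routine.
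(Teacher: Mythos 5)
Your proposal is correct, but it takes a genuinely different route from the paper. The paper proves the theorem directly from Theorem \ref{Sec-4-thm:Spectral-thm-for-P}: it writes $E_{P_{V}}\left(\left[k,k+1\right)\right)f$ explicitly as $\int_{\left[0,1\right)}e_{\lambda+k}\left(x\right)E_{V}\left(d\lambda\right)\hat{f}\left(\lambda+k,y\right)$ and defines each unitary $U_{k}:H_{k}\rightarrow H_{0}$ concretely by replacing $e_{\lambda+k}$ with $e_{\lambda}$ under the $E_{V}$-integral, then verifies $\left(L_{0}+k\right)U_{k}=U_{k}L_{k}$ by direct computation. You instead bootstrap abstractly from the preceding theorem: from a unitary $U$ with $U^{*}P_{V}U=P_{V}+1$ you deduce the spectral-measure shift $E_{P_{V}}\left(B\right)U=UE_{P_{V}}\left(B-1\right)$, restrict to get $L_{k+1}U_{k}=U_{k}\left(L_{k}+1\right)$, and iterate. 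This is valid and not circular within the paper, since the $P_{V}\cong P_{V}+1$ theorem was proved beforehand from Theorem \ref{Sec-4-thm:Spectral-thm-for-P}; your domain worries are in fact vacuous here because each $H_{k}$ lies in $\mathrm{dom}\left(P_{V}\right)$ (bounded spectral interval), so $L_{k}$ is bounded. Your argument is shorter and more general --- it shows that \emph{any} selfadjoint operator unitarily equivalent to its translate by $1$ decomposes as $\bigoplus_{k}\left(L_{0}+k\right)$, with no use of the specific structure of $P_{V}$. What it gives up is the purpose the paper assigns to this theorem: the remark preceding it states that deriving the decomposition as a consequence of Theorem \ref{Sec-4-thm:Spectral-thm-for-P} alone provides an \emph{alternative} proof of Theorems \ref{thm:1.2} and \ref{thm:1.3}, whereas your derivation consumes Theorem \ref{thm:1.2} as an input and so cannot independently re-prove it. The paper's explicit construction also hands you the intertwining unitaries in closed form, which your abstract iteration does not.
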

\begin{proof}
Clearly, $P_{V}=\bigoplus_{k\in\mathbb{Z}}L_{k}$. For all $f\in\mathscr{H}$,
and $k\in\mathbb{Z}$, define
\[
\hat{f}\left(\lambda+k,y\right):=\int_{0}^{1}\overline{e_{\lambda+k}\left(t\right)}f\left(t,y\right)dt.
\]
By Theorem \ref{Sec-4-thm:Spectral-thm-for-P}, we have
\[
E_{P_{V}}\left(\left[k,k+1\right)\right)f\left(x,y\right)=\int_{\left[0,1\right)}e_{\lambda+k}\left(x\right)E_{V}\left(d\lambda\right)\hat{f}\left(\lambda+k,y\right);
\]
and 
\[
P_{V}E_{P_{V}}\left(\left[k,k+1\right)\right)f\left(x,y\right)=\int_{\left[0,1\right)}\left(\lambda+k\right)e_{\lambda+k}\left(x\right)E_{V}\left(d\lambda\right)\hat{f}\left(\lambda+k,y\right).
\]
Let $U_{k}:H_{k}\rightarrow H_{0}$ be the unitary operator determined
by 
\begin{eqnarray*}
U_{k}E_{P_{V}}\left(\left[k,k+1\right)\right)f\left(x,y\right) & = & U_{k}\left(\int_{\left[0,1\right)}e_{\lambda+k}\left(x\right)E_{V}\left(d\lambda\right)\hat{f}\left(\lambda+k,y\right)\right)\\
 & = & \int_{\left[0,1\right)}e_{\lambda}\left(x\right)E_{V}\left(d\lambda\right)\hat{f}\left(\lambda+k,y\right).
\end{eqnarray*}
A direct computation shows that
\[
\left(P_{V}+k\right)U_{k}E_{P_{V}}\left(\left[k,k+1\right)\right)f=U_{k}P_{V}E_{P_{V}}\left(\left[k,k+1\right)\right)f,
\]
i.e.,
\[
\left(P_{V}+k\right)U_{k}=U_{k}P_{V}E_{P_{V}}\left(\left[k,k+1\right)\right).
\]
we then get 
\[
\left(L_{0}+k\right)U_{k}=U_{k}L_{k}.
\]
Notice that $P_{V}U_{k}=P_{V}E_{P_{V}}\left(\left[0,1\right)\right)U_{k}$. 

Let $U:=\bigoplus_{k\in\mathbb{Z}}U_{k}$, and it follows that 
\[
UP_{V}U^{*}=U\left(\bigoplus_{k\in\mathbb{Z}}L_{k}\right)U^{*}=\bigoplus_{k\in\mathbb{Z}}U_{k}L_{k}U_{k}^{*}=\bigoplus_{k\in\mathbb{Z}}\left(L_{0}+k\right).
\]
This completes the proof. \end{proof}
\begin{rem}
The analogue of Theorem \ref{Sec-4-thm:Sum-decomp-of-P} for $P_{\theta}$
in $L^{2}\left([0,1]\right),$ $0\le\theta<1,$ determined by $P_{\theta}f=(i2\pi)^{-1}f'$
and $f(1)=e(\theta)f(0),$ states that $P_{\theta}$ is unitary equivalent
to $\bigoplus\left(L_{0}+k\right)$ in $\ell^{2}=\bigoplus H_{0},$
where $H_{k}=\mathbb{C}$ for all $k,$ and $L_{k}z=\left(\theta+k\right)z$
for $z\in\mathbb{C}=H_{k}.$ 
\end{rem}

\section{Spectral Pairs\label{Sec-5-sec:Spectral-Pairs}}

In this section we consider momentum operators on product domains
$[0,1]\times\Omega$ in $\mathbb{R}^{2},$ in the cases where $\Omega=\mathbb{R},$
$\Omega=[0,1],$ and where $\Omega$ is a certain fractal. We investigate
when the momentum operators in the $x$ and $y$ directions commute
in terms of the boundary unitaries. 

Recall, two (unbounded) selfadjoint operators $A$ and $B$ commute
if and only if their spectral measures commute. This is equivalent
to the commutation of the unitary one-parameter groups $e(aA)$ and
$e(bB)$ in the sense that 
\[
e(aA)e(bB)=e(bB)e(aA)
\]
for all $a,b$ in $\mathbb{R}.$ See, e.g., \cite{ReSi81}.

\subsection{The Infinite Strip }

In this section we consider the infinite strip $[0,1]\times\mathbb{R}.$
We obtain a complete classification of the commuting selfadjoint extensions
of $\left(i2\pi\right)^{-1}\partial_{x}$ and $\left(i2\pi\right)^{-1}\partial_{y}$
acting in $C_{c}^{\infty}\left([0,1]\times\mathbb{R}\right)$ in terms
of the boundary unitary associated with $\left(i2\pi\right)^{-1}\partial_{x}.$
Our method yields a complete list of the spectra of the infinite strip.
This set was shown to be a spectral set in \cite{Ped87}, but the
approach there only yields a partial list of the possible spectra.
 
\begin{thm}
\label{Sec-5-thm:Strip}Let $\mathscr{H}:=L^{2}\left(\left[\alpha,\beta\right],L^{2}\left(\mathbb{R}\right)\right)$.
Suppose $P=P_{U}:=\tfrac{1}{i2\pi}\frac{\partial}{\partial x}\Big|_{\mathrm{dom}\left(P_{U}\right)}$
is the selfadjoint extension corresponding to the unitary operator
$U:L^{2}\left(\mathbb{R}\right)\rightarrow L^{2}\left(\mathbb{R}\right)$.
Define $Q:=\tfrac{1}{i2\pi}\frac{\partial}{\partial y}\Big|_{\mathrm{dom}\left(Q\right)}$,
whose domain $\mathrm{dom}\left(Q\right)$ consists of all $f\in\mathscr{H}$,
such that $\frac{\partial}{\partial y}f$ (in the sense of distribution)
is in $\mathscr{H}$. Then $P$ and $Q$ commute if and only if $U$
is diagonalized via Fourier transform, as 
\begin{equation}
U=\int_{\mathbb{R}}e\left(\gamma\left(\lambda\right)\right)\left|e_{\lambda}\left\rangle \right\langle e_{\lambda}\right|d\lambda\label{Sec-5-eq:Urep}
\end{equation}
where $\gamma:\mathbb{R}\rightarrow[0,1)$ is a Borel function. \end{thm}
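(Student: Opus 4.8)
The plan is to reduce the commutativity of the two selfadjoint operators $P$ and $Q$ to a single statement about the boundary unitary $U$ and the translation group on $L^{2}(\mathbb{R})$, and then to invoke the classical description of translation-invariant operators as Fourier multipliers. Since $Q$ is the momentum operator in the $y$-direction on all of $\mathbb{R}$, with no boundary conditions, its one-parameter unitary group is simply translation in $y$: writing $(S_{b}h)(y):=h(y+b)$ for $h\in L^{2}(\mathbb{R})$, we have $e(bQ)=I\otimes S_{b}$, where $I$ is the identity on $L^{2}([\alpha,\beta])$. As recalled at the start of this section, $P$ and $Q$ commute if and only if $e(aP)e(bQ)=e(bQ)e(aP)$ for all $a,b\in\mathbb{R}$, so I would work throughout with the unitary groups.

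For the ``only if'' direction I would use Lemma \ref{sec-2-lem:expH}: specializing the group to $a=\beta-\alpha$ gives $e((\beta-\alpha)P)=I\otimes U$. If $P$ and $Q$ commute, then $e((\beta-\alpha)P)$ commutes with every $e(bQ)$, that is $(I\otimes U)(I\otimes S_{b})=(I\otimes S_{b})(I\otimes U)$, which forces $US_{b}=S_{b}U$ for all $b\in\mathbb{R}$. Thus $U$ commutes with every translation. For the converse, suppose $US_{b}=S_{b}U$ for all $b$; taking adjoints (and $S_{b}^{*}=S_{-b}$) shows $U^{*}$ commutes with every translation as well, so $U^{n}S_{b}=S_{b}U^{n}$ for all $n\in\mathbb{Z}$. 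Using the explicit formula (\ref{sec-2-eq:unitary-group-formula}), namely $e(aP)f(x,\cdot)=U^{\lfloor x+a\rfloor}f(\langle x+a\rangle,\cdot)$, a direct computation of $e(aP)e(bQ)f$ and $e(bQ)e(aP)f$ shows the two agree, pointwise in $x$, precisely because the integer power $U^{\lfloor x+a\rfloor}$ commutes with $S_{b}$ for a.e.\ $x$ and all $a,b$. Hence the groups commute, and so do $P$ and $Q$.

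It therefore remains to identify the unitaries $U$ commuting with every translation $S_{b}$. This is the classical characterization of the commutant of the translation group: conjugating by the Fourier transform $\mathcal{F}$, the operator $\mathcal{F}S_{b}\mathcal{F}^{-1}$ is multiplication by $e(\lambda b)$, and a bounded operator commuting with all such modulations must itself be multiplication by some $m\in L^{\infty}(\mathbb{R})$. Unitarity forces $|m(\lambda)|=1$ for a.e.\ $\lambda$, so $m(\lambda)=e(\gamma(\lambda))$ for a suitable Borel function $\gamma:\mathbb{R}\to[0,1)$. Transporting back through $\mathcal{F}$ and writing this multiplier in spectral form, using that $S_{b}e_{\lambda}=e(\lambda b)e_{\lambda}$, yields exactly $U=\int_{\mathbb{R}}e(\gamma(\lambda))\,|e_{\lambda}\rangle\langle e_{\lambda}|\,d\lambda$, which is (\ref{Sec-5-eq:Urep}); the same computation verifies the converse implication that any $U$ of this form commutes with each $S_{b}$.

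The forward and converse reductions are routine once the group formula of Proposition \ref{sec-2-prop:formula-unitary-group} and Lemma \ref{sec-2-lem:expH} are in hand, so the substantive content lies in the translation-invariance characterization. The one genuine subtlety I expect is the passage from the a.e.-defined unimodular symbol $m$ to a genuine Borel representative $\gamma$ with values in $[0,1)$ — a measurable-selection step — so that the integral in (\ref{Sec-5-eq:Urep}) is literally well-defined; this is where I would be most careful.
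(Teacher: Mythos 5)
Your proof is correct, and the forward direction is essentially the paper's: both specialize Lemma \ref{sec-2-lem:expH} to get $e\left(\left(\beta-\alpha\right)P\right)=I\otimes U$, deduce $U\tau_{t}=\tau_{t}U$ from commutation with $e(tQ)=I\otimes\tau_{t}$, and then invoke the classical commutant-of-translations characterization (the paper simply cites \cite[Theorem 3.16]{SW71}, while you re-derive it from maximal abelianness of multiplication operators after Fourier conjugation; your caveat about choosing a Borel unimodular representative and setting $\gamma$ equal to the fractional part of $\tfrac{1}{2\pi}\arg m$ is easily handled and is the right thing to flag). Where you genuinely diverge is the converse. The paper fixes $f$ and $t$, compares the Fourier coefficients $\left\langle f,e(tQ)\left(1\otimes U^{n}\right)f\right\rangle$ and $\left\langle f,\left(1\otimes U^{n}\right)e(tQ)f\right\rangle$, $n\in\mathbb{Z}$, of the two scalar measures $\left\langle f,e(tQ)E_{U}(d\lambda)f\right\rangle$ and $\left\langle f,E_{U}(d\lambda)e(tQ)f\right\rangle$, concludes the measures coincide, and then integrates against $e(s\lambda)$ to commute the groups -- a spectral-measure argument whose last step is somewhat terse, since $e(sP)$ is not literally $\int_{[0,1)}e(s\lambda)E_{U}(d\lambda)$ but must be unpacked through Theorem \ref{Sec-4-thm:Spectral-thm-for-P}. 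You instead verify the group commutation pointwise from the explicit formula of Proposition \ref{sec-2-prop:formula-unitary-group}: both $e(aP)e(bQ)f(x,\cdot)$ and $e(bQ)e(aP)f(x,\cdot)$ equal $U^{\left\lfloor x+a\right\rfloor }S_{b}f\left(\left\langle x+a\right\rangle ,\cdot\right)$ once $U$ (hence every integer power $U^{n}$, by taking adjoints) commutes with $S_{b}$. This is more elementary, bypasses the measure-theoretic step entirely, and isolates the cleaner intermediate equivalence that $P$ and $Q$ commute if and only if $U$ commutes with all translations; it also mirrors the computation the paper itself uses in the converse half of Theorem \ref{Sec-2-thm:Unitary-equivalence}. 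The paper's converse, by contrast, is structured so that the same Fourier-coefficient technique transfers to the unit-square case (Theorem \ref{Sec-5-thm:spectral-square}), where no such pointwise argument is available; your route buys simplicity here at the cost of that uniformity.
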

\begin{proof}
Note that $Q$ is selfadjoint, and the unitary one-parameter group
$e\left(tQ\right)$, $t\in\mathbb{R}$, is given by 
\[
e\left(tQ\right)f\left(x,y\right)=f\left(x,y+t\right)
\]
for all $f\in\mathscr{H}$. That is, 
\begin{equation}
e\left(tQ\right)=I\otimes\tau_{t}\label{eq:tR}
\end{equation}
where $\tau_{t}$ is the translation group in $L^{2}\left(\mathbb{R}\right)$.
Further, $Q$ is diagonalized via the Fourier transform 
\begin{equation}
Q=I\otimes\int_{-\infty}^{\infty}\lambda\,\left|e_{\lambda}\left\rangle \right\langle e_{\lambda}\right|d\lambda.\label{eq:Q}
\end{equation}
Here, $d\lambda$ denotes the Lebesgue measure on $\mathbb{R}$.

Now, suppose the two unitary one-parameter groups commute. By Lemma
\ref{sec-2-lem:expH}, 
\[
e\left(\left(\beta-\alpha\right)P\right)=I\otimes U.
\]
It follows that $I\otimes U$ commutes with $e\left(tQ\right)$, for
all $t\in\mathbb{R}$. From (\ref{eq:tR})-(\ref{eq:Q}), we see that
\begin{alignat*}{1}
\left(I\otimes U\right)e\left(tQ\right) & =\left(I\otimes U\right)\left(I\otimes\tau_{t}\right)=I\otimes U\tau_{t}\\
e\left(tQ\right)\left(I\otimes U\right) & =\left(I\otimes\tau_{t}\right)\left(I\otimes U\right)=I\otimes\tau_{t}U;
\end{alignat*}
hence $U\tau_{t}=\tau_{t}U$, for all $t\in\mathbb{R}$. Consequently,
e.g., \cite[Theorem 3.16]{SW71}, $U$ is diagonalized via the Fourier
transform, as in (\ref{Sec-5-eq:Urep}).

Conversely, suppose $U$ is given by (\ref{Sec-5-eq:Urep}). Fix $f\in\mathscr{H}$,
$t\in\mathbb{R}$. For all $n\in\mathbb{Z}$, we have 

\begin{equation}
\left\langle f,e\left(tQ\right)\left(1\otimes U^{n}\right)f\right\rangle =\int_{\left[0,1\right)}e_{n}\left(\lambda\right)\left\langle f,e\left(tQ\right)E_{U}\left(d\lambda\right)f\right\rangle ;\label{eq:m1}
\end{equation}
and 
\begin{equation}
\left\langle f,\left(1\otimes U^{n}\right)e\left(tQ\right)f\right\rangle =\int_{\left[0,1\right)}e_{n}\left(\lambda\right)\left\langle f,E_{U}\left(d\lambda\right)e\left(tQ\right)f\right\rangle .\label{Sec-5-eq:m2}
\end{equation}
Note that, by assumption, $U$ is diagonalized via Fourier transform,
and so $1\otimes U^{n}$ commutes with $e\left(tQ\right)$, for all
$n\in\mathbb{Z}$. Therefore, the two Borel measures, on the right-hand-side
of (\ref{eq:m1}) and (\ref{Sec-5-eq:m2}), have the same Fourier
coefficients; thus 
\begin{equation}
\left\langle f,e\left(tQ\right)E_{U}\left(d\lambda\right)f\right\rangle =\left\langle f,E_{U}\left(d\lambda\right)e\left(tQ\right)f\right\rangle .\label{eq:emeas}
\end{equation}
Multiplying $e\left(s\lambda\right)$ on both sides of (\ref{eq:emeas})
and integrating over $\left[0,1\right)$, we get 
\[
\int_{\left[0,1\right)}e\left(s\lambda\right)\left\langle f,e\left(tQ\right)E_{U}\left(d\lambda\right)f\right\rangle =\int_{\left[0,1\right)}e\left(s\lambda\right)\left\langle f,E_{U}\left(d\lambda\right)e\left(tQ\right)f\right\rangle 
\]
i.e., 
\[
\left\langle f,e\left(tQ\right)e\left(sP\right)f\right\rangle =\left\langle f,e\left(sP\right)e\left(tQ\right)f\right\rangle 
\]
for all $s\in\mathbb{R}$. 

Since $f$ and $t$ are arbitrary, we conclude that $e\left(sP\right)$
commutes with $e\left(tQ\right)$, for all $s,t\in\mathbb{R}$.
\end{proof}

\begin{rem}
\label{Sec-5-Remark5.2}To put $U$ in (\ref{Sec-5-eq:Urep}) into
the standard projection-valued measure form (\ref{Sec-4-eq:spV}),
let $E\left(d\lambda\right):=\left|e_{\lambda}\left\rangle \right\langle e_{\lambda}\right|d\lambda$,
and $E_{U}:=E\circ\gamma^{-1}$. Hence, 
\[
U=\int_{\mathbb{R}}e\left(\gamma\left(\lambda\right)\right)E\left(d\lambda\right)=\int_{[0,1)}e\left(\lambda\right)E\left(\gamma^{-1}\left(d\lambda\right)\right)=\int_{[0,1)}e\left(\lambda\right)E_{U}\left(d\lambda\right).
\]
Note that, for all $\varphi\in L^{2}\left(\mathbb{R}\right)$, and
all Borel set $\triangle$ in $\mathbb{R}$, 
\begin{eqnarray*}
\left\Vert E\left(\triangle\right)\varphi\right\Vert ^{2} & = & \int_{\triangle}\left|\widehat{\varphi}\left(\lambda\right)\right|^{2}d\lambda\\
\left\Vert E_{U}\left(\triangle\right)\varphi\right\Vert ^{2} & = & \int_{\gamma^{-1}\left(\triangle\right)}\left|\widehat{\varphi}\left(\lambda\right)\right|^{2}d\lambda.
\end{eqnarray*}

\begin{rem}
It follows from Theorem \ref{Sec-4-thm:Spectral-thm-for-P} and Remark
\ref{Sec-5-Remark5.2} that
\begin{align*}
P_{U} & =\int_{[0,1)}\sum_{m\in\mathbb{Z}}\left(\lambda+m\right)\left|e_{\lambda+m}\left\rangle \right\langle e_{\lambda+m}\right|\otimes E_{U}\left(d\lambda\right)\\
 & =\int_{\mathbb{R}}\sum_{m\in\mathbb{Z}}\left(\gamma(\lambda)+m\right)\left|e_{\gamma(\lambda)+m}\left\rangle \right\langle e_{\gamma(\lambda)+m}\right|\otimes\left|e_{\lambda}\left\rangle \right\langle e_{\lambda}\right|d\lambda
\end{align*}
Moreover, $Q$ is diagonalized via the Fourier transform, see (\ref{eq:Q}).
Therefore, the joint spectrum of $P_{U}$ and $Q$ is the closure
of the set 
\[
\Lambda_{\gamma}:=\left\{ \left(\begin{array}{c}
\gamma(\lambda)+m\\
\lambda
\end{array}\right)\:\Big|\: m\in\mathbb{Z},\lambda\in\mathbb{R}\right\} ,
\]
provided $\gamma$ has been chosen such that $e(\gamma(\lambda))$
is in the spectrum of $U$ for all $\lambda.$ 
\end{rem}
\end{rem}

\subsection{The Unit Square }

In this section we consider the unit square $[0,1]^{2}$. We obtain
a complete classification of the commuting extensions of $\tfrac{1}{i2\pi}\partial_{x}$
and $\tfrac{1}{i2\pi}\partial_{y}$ acting in $C_{c}^{\infty}\left([0,1]^{2}\right)$
in term of the boundary unitaries, see also \cite{JP00}. As a consequence
we recover the list of all possible spectra of $[0,1]^{2}$ first
obtained in \cite{JP99}. 
\begin{lem}
\label{lem:disI}Let $\left(X,\mathfrak{M}_{X},\mu\right)$ and $\left(Y,\mathfrak{M}_{Y},\nu\right)$
be measure spaces, where $\mu$ is a complex measure on $\mathfrak{M}_{X}$
and $\nu$ a positive measure on $\mathfrak{M}_{Y}$. Let $\pi:X\rightarrow Y$
be a measurable function.

Suppose there is a family of measures $\left\{ \psi\left(y,\cdot\right)\right\} _{y\in Y}$,
such that, 
\begin{enumerate}
\item \label{enu:c1}For all $y\in Y$, $\psi\left(y,\cdot\right)$ is supported
in $\pi^{-1}\left(y\right)$;
\item \label{enu:c2}For all $B\in\mathfrak{M}_{X}$, $\psi\left(\cdot,B\right)\in L^{1}\left(d\nu\right)$;
and
\begin{equation}
\mu\left(B\right)=\int\psi\left(y,B\right)\nu\left(dy\right).\label{Sec-5-eq:disI}
\end{equation}

\end{enumerate}

\begin{flushleft}
Then, for each $B\in\mathfrak{M}_{X}$, $\psi\left(\cdot,B\right)$
is uniquely determined. That is, if $\left\{ \psi'\left(y,\cdot\right)\right\} _{y\in Y}$
is another family of measures satisfying (\ref{enu:c1})-(\ref{enu:c2}),
then for all $B\in\mathfrak{M}_{X}$, 
\[
\psi\left(\cdot,B\right)=\psi'\left(\cdot,B\right),\quad\nu\mbox{ - }a.e.
\]

\par\end{flushleft}

\end{lem}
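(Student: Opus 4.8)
The plan is to prove a uniqueness (essentially a.e.-uniqueness) statement for a disintegration-type identity. Given two families $\{\psi(y,\cdot)\}$ and $\{\psi'(y,\cdot)\}$ satisfying conditions (\ref{enu:c1})--(\ref{enu:c2}), I fix a measurable set $B\in\mathfrak{M}_{X}$ and aim to show $\psi(\cdot,B)=\psi'(\cdot,B)$ for $\nu$-almost every $y$. The obvious first move, subtracting the two representations of $\mu(B)$, only gives $\int\bigl(\psi(y,B)-\psi'(y,B)\bigr)\,\nu(dy)=0$, which is far too weak on its own: vanishing of a single integral cannot force the integrand to vanish a.e. The whole point must be to leverage the support condition (\ref{enu:c1}), namely that $\psi(y,\cdot)$ and $\psi'(y,\cdot)$ both live on the fiber $\pi^{-1}(y)$, together with the freedom to apply (\ref{Sec-5-eq:disI}) not just to $B$ but to sets of the form $B\cap\pi^{-1}(A)$ for arbitrary $A\in\mathfrak{M}_{Y}$.

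\medskip

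\noindent\textbf{Main steps.} First I would fix $B$ and, for an arbitrary measurable set $A\subseteq Y$, apply the disintegration identity (\ref{Sec-5-eq:disI}) to the set $B\cap\pi^{-1}(A)\in\mathfrak{M}_{X}$. The key observation is that, because $\psi(y,\cdot)$ is supported in $\pi^{-1}(y)$, one has
\[
\psi\bigl(y,\,B\cap\pi^{-1}(A)\bigr)=\chi_{A}(y)\,\psi(y,B)
\]
for $\nu$-almost every $y$: if $y\in A$ then $\pi^{-1}(y)\subseteq\pi^{-1}(A)$ so the intersection with $B$ is seen in full, whereas if $y\notin A$ the fiber $\pi^{-1}(y)$ is disjoint from $\pi^{-1}(A)$ and the measure $\psi(y,\cdot)$, being concentrated on that fiber, assigns mass zero. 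The identical identity holds for $\psi'$. Substituting into (\ref{Sec-5-eq:disI}) yields
\[
\mu\bigl(B\cap\pi^{-1}(A)\bigr)=\int_{A}\psi(y,B)\,\nu(dy)=\int_{A}\psi'(y,B)\,\nu(dy).
\]
Thus the two $L^{1}(d\nu)$ functions $y\mapsto\psi(y,B)$ and $y\mapsto\psi'(y,B)$ have equal integrals over every measurable set $A$.

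\medskip

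\noindent\textbf{Conclusion.} Since both functions lie in $L^{1}(d\nu)$ by (\ref{enu:c2}), equality of their integrals over all $A\in\mathfrak{M}_{Y}$ forces them to agree $\nu$-almost everywhere; concretely, apply the equality to $A=\{y:\psi(y,B)>\psi'(y,B)\}$ and to its complementary-type set to conclude that the nonnegative and nonpositive parts of the difference each integrate to zero, hence the difference vanishes $\nu$-a.e. This gives $\psi(\cdot,B)=\psi'(\cdot,B)$ $\nu$-a.e., as claimed.

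\medskip

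\noindent\textbf{Main obstacle.} The step requiring care is the fiberwise identity $\psi(y,B\cap\pi^{-1}(A))=\chi_{A}(y)\psi(y,B)$. Making this rigorous for $\nu$-a.e.\ $y$ depends on interpreting ``supported in $\pi^{-1}(y)$'' correctly and on the measurability (in $y$) of $\psi(y,B\cap\pi^{-1}(A))$, which is supplied by (\ref{enu:c2}) applied to the set $B\cap\pi^{-1}(A)$. One must also confirm that $\pi^{-1}(A)\in\mathfrak{M}_{X}$ so that $B\cap\pi^{-1}(A)$ is an admissible argument; this is exactly the measurability of $\pi$. Beyond these points the argument is routine, and no completeness or $\sigma$-finiteness hypotheses beyond those stated are needed because we work one fixed $B$ at a time.
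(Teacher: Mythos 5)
Your proposal is correct and follows essentially the same route as the paper: both arguments hinge on the identity $\psi\left(y,B\cap\pi^{-1}\left(A\right)\right)=\chi_{A}\left(y\right)\psi\left(y,B\right)$, derived from the support condition, to obtain $\mu\left(B\cap\pi^{-1}\left(A\right)\right)=\int_{A}\psi\left(y,B\right)\nu\left(dy\right)$ for every $A\in\mathfrak{M}_{Y}$; the paper then packages the conclusion as uniqueness of the Radon--Nikodym derivative of $\mu\left(B\cap\pi^{-1}\left(\cdot\right)\right)$ with respect to $\nu$, which is exactly your elementary ``equal integrals over all sets'' argument. The only small adjustment needed is that, since $\mu$ (and hence the values $\psi\left(y,B\right)$) may be complex, your final choice of test set $A=\left\{ y:\psi\left(y,B\right)>\psi'\left(y,B\right)\right\} $ should be applied separately to the real and imaginary parts of the difference.
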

\begin{proof}
Fix $B\in\mathfrak{M}_{X}$. For all $F\in\mathfrak{M}_{Y}$, we have
\begin{equation}
\int_{F}\psi\left(y,B\right)\nu\left(dy\right)=\int\psi\left(y,B\cap\pi^{-1}\left(F\right)\right)\nu\left(dy\right)=\mu\left(B\cap\pi^{-1}\left(F\right)\right).\label{eq:mn1}
\end{equation}
Note the first equality follows from the assumption that $\psi\left(y,\cdot\right)$
is supported in $\pi^{-1}\left(y\right)$, for all $y\in Y$.

Consequently, $\mu\left(B\cap\pi^{-1}\left(\cdot\right)\right)\ll\nu$,
and the associated Radon-Nikodym derivative is $\psi\left(\cdot,B\right)$.
If $\left\{ \psi'\left(y,\cdot\right)\right\} _{y\in Y}$ is another
family of measures as stated, then (\ref{eq:mn1}) holds with $\psi'$
on the left-hand-side. The uniqueness of Radon-Nikodym derivative
then implies that $\psi\left(\cdot,B\right)=\psi'\left(\cdot,B\right)$,
$\nu$-a.e.\end{proof}
\begin{thm}
\label{Sec-5-thm:spectral-square}Let $\mathscr{H}:=L^{2}\left[0,1\right]\otimes L^{2}\left[0,1\right]$.
Let $P=P_{U}:=\tfrac{1}{i2\pi}\frac{\partial}{\partial x}\Big|_{\mathrm{dom}\left(P_{U}\right)}$
and $Q=Q_{V}:=\tfrac{1}{i2\pi}\frac{\partial}{\partial y}\Big|_{\mathrm{dom}\left(Q\right)}$
be the selfadjoint extensions corresponding to the boundary unitary
operators $U:L^{2}\left(I_{y}\right)\rightarrow L^{2}\left(I_{y}\right)$,
$V:L^{2}\left(I_{x}\right)\rightarrow L^{2}\left(I_{x}\right)$, respectively. 

Then $P$ and $Q$ commute if and only if there are $\alpha,\beta_{m}\in[0,1)$
such that 
\begin{equation}
Ve_{\alpha+m}=e(\beta_{m})e_{\alpha+m}\text{ and }U=e(\alpha)I\label{Sec-5-eq:1}
\end{equation}
or 
\begin{equation}
V=e(\alpha)I\text{ and }Ue_{\alpha+m}=e(\beta_{m})e_{\alpha+m}\label{Sec-5-eq:2}
\end{equation}
for all $m\in\mathbb{Z}.$ \end{thm}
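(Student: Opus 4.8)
The plan is to prove both implications by passing to the unitary one-parameter groups, since $P$ and $Q$ commute if and only if $e(sP)$ and $e(tQ)$ commute for all $s,t\in\mathbb{R}$. The point of departure is Lemma \ref{sec-2-lem:expH}, which (with $\beta-\alpha=1$) gives $e(P)=I\otimes U$ and $e(Q)=V\otimes I$, where in the factorization $\mathscr{H}=L^{2}(I_{x})\otimes L^{2}(I_{y})$ the operator $U$ acts on the $y$-factor and $V$ on the $x$-factor. Specializing the group commutation to $t=1$ shows that $V\otimes I$ commutes with the group $e(sP)$, hence with $P_{U}$; specializing to $s=1$ shows that $I\otimes U$ commutes with $Q_{V}$. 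I would extract the entire rigidity of the statement from just these two commutation relations, and then verify sufficiency directly.

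For the forward direction I would write the spectral resolution $U=\int_{[0,1)}e(\lambda)E_{U}(d\lambda)$ and diagonalize $U$ on the $y$-factor, decomposing $\mathscr{H}$ as a direct integral over $\mathrm{supp}(E_{U})$. Under this decomposition $P_{U}$ becomes decomposable with fiber the scalar momentum operator $P_{(\lambda)}:=\frac{1}{i2\pi}\frac{d}{dx}$ on $L^{2}(I_{x})$ with boundary condition $g(1)=e(\lambda)g(0)$ --- this fiber identification is exactly Lemma \ref{sec-2-lem:expH} together with Proposition \ref{Sec-3-prop:Eigenvalues-P_V} --- while $V\otimes I$ acts fiberwise as $V$. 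Hence $V$ commutes with $P_{(\lambda)}$ for $E_{U}$-a.e.\ $\lambda$. The crux is the following rigidity lemma, which I would prove by hand: \emph{if a bounded operator $V$ on $L^{2}[0,1]$ commutes with both $P_{(\lambda)}$ and $P_{(\lambda')}$ for some $\lambda-\lambda'\notin\mathbb{Z}$, then $V$ is a scalar.} Since $P_{(\lambda)}$ has simple spectrum $\{\lambda+m\}$ with eigenvectors $e_{\lambda+m}$, commuting with it forces $Ve_{\lambda+m}=d_{m}e_{\lambda+m}$; expanding $e_{\lambda'+k}=\sum_{m}a_{m}e_{\lambda+m}$ and computing $a_{m}=\int_{0}^{1}e((\lambda'-\lambda+k-m)x)\,dx$, which is nonzero for every $m$ precisely because $\lambda'-\lambda\notin\mathbb{Z}$, the two diagonalizations are compatible only if all $d_{m}$ coincide.

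This lemma produces the dichotomy. If $\mathrm{supp}(E_{U})$ contains at least two points, then $V$ commutes with two such $P_{(\lambda)}$ and is therefore a scalar $e(\alpha)I$; feeding $V=e(\alpha)I$ into the relation that $I\otimes U$ commutes with $Q_{V}=I\otimes Q_{(\alpha)}$ shows that $U$ commutes with the simple-spectrum operator $Q_{(\alpha)}$, whence $Ue_{\alpha+m}=e(\beta_{m})e_{\alpha+m}$, giving (\ref{Sec-5-eq:2}). Otherwise $E_{U}$ is a single atom at some $\alpha$, i.e.\ $U=e(\alpha)I$, and the fiber relation directly gives $Ve_{\alpha+m}=e(\beta_{m})e_{\alpha+m}$, giving (\ref{Sec-5-eq:1}). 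For the converse I would verify commutation by a block decomposition: in case (\ref{Sec-5-eq:1}) the basis $\{e_{\alpha+m}\}$ of $L^{2}(I_{x})$ simultaneously diagonalizes $P$, which acts as the scalar $\alpha+m$ on the block $e_{\alpha+m}\otimes L^{2}(I_{y})$, and decouples the $Q$-boundary condition into $f_{m}(1)=e(\beta_{m})f_{m}(0)$, so that each $e(tQ)$ preserves the blocks while each $e(sP)$ is scalar on them; scalars and block-preserving operators commute, and case (\ref{Sec-5-eq:2}) is symmetric.

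The main obstacle I anticipate is the continuous-spectrum bookkeeping in the forward direction: making the passage ``$V\otimes I$ commutes with $P_{U}$ $\Rightarrow$ $V$ commutes with $P_{(\lambda)}$ for a.e.\ $\lambda$'' rigorous requires the direct-integral reduction (equivalently, the spectral-projection formula of Theorem \ref{Sec-4-thm:Spectral-thm-for-P}) rather than a naive eigenvector argument, since $U$ need not possess any eigenvectors at all. Once that reduction is in place, the two-basis rigidity lemma and the scalar-case computations are elementary.
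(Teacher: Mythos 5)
Your argument is correct, and it reaches the same two pivot points as the paper's proof --- (i) the a.e.\ fiberwise statement that $V$ commutes with the scalar momentum operator $P_{(\lambda)}$ (the paper's relation $V\Psi\left(\lambda,\triangle\right)=\Psi\left(\lambda,\triangle\right)V$, $\nu$-a.e.), and (ii) the rigidity coming from the nonvanishing of $\left\langle e_{\lambda'+m'},e_{\lambda+m}\right\rangle$ when $\lambda-\lambda'\notin\mathbb{Z}$, which is verbatim the computation in the paper's Case 2 --- but you get to (i) by a genuinely different mechanism. The paper does not invoke direct-integral theory: it takes the explicit disintegration of $E_{P}$ from Theorem \ref{Sec-4-thm:Spectral-thm-for-P}, forms the scalar measures $\triangle\mapsto\left\langle f\otimes g,\left(V\otimes I\right)E_{P}\left(\triangle\right)f\otimes g\right\rangle$ and $\triangle\mapsto\left\langle f\otimes g,E_{P}\left(\triangle\right)\left(V\otimes I\right)f\otimes g\right\rangle$, and proves a bespoke uniqueness-of-disintegration lemma (Lemma \ref{lem:disI}, via Radon--Nikodym, using that $\Psi\left(\lambda,\cdot\right)$ is carried by $\pi^{-1}\left(\lambda\right)=\lambda+\mathbb{Z}$) to force the two disintegrands to agree $\nu$-a.e. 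Your appeal to decomposable-operator theory (global commutation of decomposable operators is equivalent to fiberwise a.e.\ commutation, applied to $e\left(sP_{U}\right)$, which is decomposable over the spectral decomposition of $U$ by the group formula of Proposition \ref{sec-2-prop:formula-unitary-group}) is a legitimate substitute, and you correctly identify it as the technical crux; the trade-off is that you import standard but nontrivial direct-integral machinery where the paper stays self-contained at the cost of hand-rolling Lemma \ref{lem:disI}. Two further points in your favor: isolating the two-basis computation as a standalone rigidity lemma makes the dichotomy cleaner, and running the case split on $\mathrm{supp}\left(E_{U}\right)$ rather than on the paper's set $S$ (which is defined only up to a $\nu$-null set and then treated in Case 1 as if it were the support of $E_{U}$) quietly repairs a small imprecision in the paper's Case 1/Case 2 division. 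Your converse via the blocks $e_{\alpha+m}\otimes L^{2}\left(I_{y}\right)$, on which $e\left(sP\right)$ is the scalar $e\left(s\left(\alpha+m\right)\right)$ and which $e\left(tQ\right)$ preserves, is equivalent to the paper's one-line observation that $e_{\alpha+m}\otimes e_{\beta_{m}+n}$ is a complete family of joint eigenfunctions.
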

\begin{proof}
Suppose 
\begin{eqnarray}
U & = & \int_{[0,1)}e\left(\lambda\right)E_{U}\left(d\lambda\right)\label{eq:spU1}\\
P & = & \int_{\mathbb{R}}\lambda E_{P}\left(d\lambda\right)\label{eq:spP1}
\end{eqnarray}
where $E_{U}$ and $E_{P}$ are the respective projection-valued measures.
By Theorem 4.3, for all Borel set $\triangle\subset\mathbb{R}$, 
\begin{eqnarray}
E_{P}\left(\triangle\right) & = & \int_{\left[0,1\right)}\Psi\left(\lambda,\triangle\right)\otimes E_{U}\left(d\lambda\right);\mbox{ where}\label{eq:Epk}\\
\Psi\left(\lambda,\triangle\right) & := & \sum_{m\in\mathbb{Z}}\chi_{\triangle}\left(\lambda+m\right)\left|e_{\lambda+m}\left\rangle \right\langle e_{\lambda+m}\right|.\label{Sec-5-eq:psik}
\end{eqnarray}
Let $f\otimes g\in\mathscr{H}$, then 
\begin{eqnarray}
\left\langle f\otimes g,\left(V\otimes I\right)E_{P}\left(\triangle\right)f\otimes g\right\rangle  & = & \int_{\left[0,1\right)}\left\langle f,V\Psi\left(\lambda,\triangle\right)f\right\rangle \left\Vert E_{U}\left(d\lambda\right)g\right\Vert ^{2}\label{eq:cm1-1}\\
\left\langle f\otimes g,E_{P}\left(\triangle\right)\left(V\otimes I\right)f\otimes g\right\rangle  & = & \int_{\left[0,1\right)}\left\langle f,\Psi\left(\lambda,\triangle\right)Vf\right\rangle \left\Vert E_{U}\left(d\lambda\right)g\right\Vert ^{2}.\label{eq:cm2-1}
\end{eqnarray}

Now, suppose $e\left(sP\right)$ and $e\left(tQ\right)$ commute,
for all $s,t\in\mathbb{R}$. In particular, by Lemma \ref{sec-2-lem:expH}
\[
V\otimes I=e\left(Q\right)
\]
so that $V\otimes I$ commutes with $e\left(sP\right)$, $s\in\mathbb{R}$.
Similarly, $I\otimes U$ commutes with $e\left(tQ\right)$, $t\in\mathbb{R}$.

Hence, the two complex Borel measures on the left-hand-side of (\ref{eq:cm1-1})-(\ref{eq:cm2-1})
are identical. We denote this measure by $\mu$. Also, let 
\[
\nu\left(d\lambda\right):=\left\Vert E_{U}\left(d\lambda\right)g\right\Vert ^{2}.
\]
Define $\pi:\left(\mathbb{R},\mu\right)\rightarrow\left(\mathbb{T}\cong\left[0,1\right),\nu\right)$
as the quotient map. Set 
\begin{eqnarray*}
\psi_{1}\left(\lambda,\cdot\right) & := & \left\langle f,V\Psi\left(\lambda,\cdot\right)f\right\rangle \\
\psi_{2}\left(\lambda,\cdot\right) & := & \left\langle f,\Psi\left(\lambda,\cdot\right)Vf\right\rangle .
\end{eqnarray*}
Then, for $j=1,2$, we have
\begin{enumerate}
\item For all $\lambda\in\left[0,1\right)$, $\psi_{j}\left(\lambda,\cdot\right)$
is supported in $\pi^{-1}\left(\lambda\right)=\lambda+\mathbb{Z}$;
see (\ref{Sec-5-eq:psik});
\item $\psi_{j}\left(\cdot,\triangle\right)\in L^{\infty}\left(\nu\right)$,
and 
\[
\mu\left(\triangle\right)=\int_{\left[0,1\right)}\psi_{j}\left(\lambda,\triangle\right)\nu\left(d\lambda\right);
\]
see (\ref{eq:cm1-1})-(\ref{eq:cm2-1}).
\end{enumerate}
Thus, by Lemma \ref{lem:disI}, $\left\langle f,V\Psi\left(\lambda,\triangle\right)f\right\rangle =\left\langle f,\Psi\left(\lambda,\triangle\right)Vf\right\rangle $,
$\nu$-a.e. Since $f$ is arbitrary, we conclude that 
\begin{equation}
V\Psi\left(\lambda,\triangle\right)=\Psi\left(\lambda,\triangle\right)V,\quad\nu\mbox{ - }a.e.\label{Sec-5eq:aeV}
\end{equation}
for each Borel set $\triangle\subset\mathbb{R}$.

Note that $\Psi\left(\lambda,\cdot\right)$ is a resolution of identity
in $L^{2}\left[0,1\right]$, thus (\ref{Sec-5eq:aeV}) implies that
there exists $\beta_{\lambda+m}\in\left[0,1\right)$, $m\in\mathbb{Z}$,
such that 
\begin{equation}
Ve_{\lambda+m}=e\left(\beta_{\lambda+m}\right)e_{\lambda+m},\quad\nu\mbox{ - }a.e.\label{eq:digV}
\end{equation}
Let $S$ be the set of $\lambda\in\left[0,1\right)$ such that (\ref{Sec-5eq:aeV})
holds, thus, $\nu\left(S^{c}\right)=0$. We proceed to show there
are two possibilities:

\textbf{Case 1.} $S=\left\{ \alpha\right\} $, i.e., a singleton.
Then (\ref{eq:digV}) yields 
\[
V=\sum_{m\in\mathbb{Z}}e\left(\beta_{m}\right)\left|e_{\alpha+m}\left\rangle \right\langle e_{\alpha+m}\right|.
\]
Moreover, since $\nu_{g}\left(d\lambda\right)=\left\Vert E_{U}\left(d\lambda\right)g\right\Vert ^{2}$
is supported at $\left\{ \alpha\right\} $ and $g$ was arbitrary,
it follows that 
\[
U=e\left(\alpha\right)I.
\]
This yields (\ref{Sec-5-eq:1}).

\textbf{Case 2.} $S$ consists of more than one point. Let $\lambda,\lambda'$
be distinct points in $S$. Then 
\begin{eqnarray*}
 &  & \left(e\left(\beta_{\lambda+m}\right)-e\left(\beta_{\lambda'+m'}\right)\right)\left\langle e_{\lambda'+m'},e_{\lambda+m}\right\rangle \\
 & = & \left\langle e_{\lambda'+m'},Ve_{\lambda+m}\right\rangle -\left\langle V^{*}e_{\lambda'+m'},e_{\lambda+m}\right\rangle \\
 & = & 0.
\end{eqnarray*}
Since $\left\langle e_{\lambda'+m'},e_{\lambda+m}\right\rangle \neq0$,
there is a constant $\alpha\in\left[0,1\right)$, such that $\beta_{\lambda+m}=\alpha$,
for all $\lambda\in\left[0,1\right)$, and $m\in\mathbb{Z}$. That
is,
\[
V=e\left(\alpha\right)I.
\]

We then run through the argument used in the proof, starting with
the fact that $I\otimes U$ commutes with $e\left(tQ\right)$, $t\in\mathbb{R}$.
It follows that
\[
U=\sum_{m\in\mathbb{Z}}e\left(\beta_{n}\right)\left|e_{\alpha+n}\left\rangle \right\langle e_{\alpha+n}\right|.
\]
This yields (\ref{Sec-5-eq:2}).

The converse is essentially trivial. For example, if (\ref{Sec-5-eq:1}),
then the functions $e_{\alpha+m}\otimes e_{\beta_{m}+n},$ $m,n\in\mathbb{Z}$
is a complete set of joint eigenfunctions for $P_{U}$ and $Q_{V}.$ \end{proof}
\begin{rem}
In case (\ref{Sec-5-eq:1}) the joint spectrum of $P$ and $Q$ is
\begin{equation}
\left\{ \left(\begin{array}{c}
\alpha+m\\
\beta_{m}+n
\end{array}\right)\mid m,n\in\mathbb{Z}\right\} ,\label{eq:sp1}
\end{equation}
since Theorem \ref{Sec-4-thm:Spectral-thm-for-P} in this case states
that 
\begin{eqnarray*}
P & = & \sum_{m\in\mathbb{Z}}\left(m+\alpha\right)\left|e_{\alpha+m}\left\rangle \right\langle e_{\alpha+m}\right|\otimes I\\
 & = & \sum_{m,n\in\mathbb{Z}}\left(m+\alpha\right)\left|e_{\alpha+m}\left\rangle \right\langle e_{\alpha+m}\right|\otimes\Bigl(\left|e_{\beta_{m}+n}\left\rangle \right\langle e_{\beta_{m}+n}\right|\Bigr)\\
Q & = & \sum_{m,n\in\mathbb{Z}}\left(\beta_{m}+n\right)\Bigl(\left|e_{\alpha+m}\left\rangle \right\langle e_{\alpha+m}\right|\Bigr)\otimes\Bigl(\left|e_{\beta_{m}+n}\left\rangle \right\langle e_{\beta_{m}+n}\right|\Bigr).
\end{eqnarray*}
Similarly, in case (\ref{Sec-5-eq:2}) the joint spectrum is 
\begin{equation}
\left\{ \left(\begin{array}{c}
\beta_{n}+m\\
\alpha+n
\end{array}\right)\mid m,n\in\mathbb{Z}\right\} .\label{eq:sp2}
\end{equation}
That this is the possible joint spectra was established in \cite{JP00}
by a different method. 
\begin{rem}
\label{Sec-5:Remark-Square-geometric}Suppose (\ref{Sec-5-eq:2}),
then $U$ is unitary equivalent to $\widetilde{U}e_{m}=e\left(\beta_{m}\right)e_{m}.$
Hence $P_{U}$ is unitary equivalent to $P_{\widetilde{U}}$ by Theorem
\ref{Sec-2-thm:Unitary-equivalence}. Furthermore, $\widetilde{U}$
is a geometric boundary condition, more precisely, a rotation if and
only if there is a real number $r,$ such that $\beta_{m}$ is the
fractional part $\left\langle rm\right\rangle $ of $rm$ for all
$m.$ See Remark \ref{Sec-3-remark:Geometric-Boundary-Condition}
and Remark \ref{Sec-3-Example:rotations}.
\end{rem}
\end{rem}

\subsection{A Fractal}

Let $\mu$ be a probability measure with support $C\subset\mathbb{R}.$
Suppose the functions 
\[
e_{\lambda},\lambda\in\Lambda
\]
form an orthonormal basis for $L^{2}(\mu).$ Let $Q$ be the selfadjoint
operator determined by 
\[
Q\left(\sum_{\lambda}c_{\lambda}g_{\lambda}\otimes e_{\lambda}\right)=\sum_{\lambda}\lambda c_{\lambda}g_{\lambda}\otimes e_{\lambda}
\]
whose domain is the set of all $g\in L^{2}\left(\left[0,1\right]\right)$
and all finite sums $\sum_{\lambda}c_{\lambda}g_{\lambda}\otimes e_{\lambda}$
with $c_{\lambda}\in\mathbb{C},$ $g_{\lambda}\in L^{2}\left(\left[0,1\right]\right),$
and $\lambda\in\Lambda.$ Then $Q$ is essentially selfadjoint and
$Qf=\tfrac{1}{i2\pi}\partial_{y}f$ for any $f=g\otimes\sum_{\lambda}c_{\lambda}e_{\lambda}.$
See also Appendix \ref{sec-B:Questions}. We also denote the closure
of this operator by $Q.$ 
\begin{thm}
\label{Sec-5-thm:Fractal}Let $U$ be a unitary on $H=L^{2}(\mu)$
and let $P_{U}$ be the corresponding selfadjoint extension of $P_{0}$,
in $L^{2}\left(\left[0,1\right]\right)\otimes H$ determined by (\ref{eq:P_U})
and (\ref{eq:dom-P_U}). Then $P_{U}$ and $Q$ commute if and only
if the function $e_{\lambda},$ $\lambda\in\Lambda$ are eigenfunctions
for $U.$ \end{thm}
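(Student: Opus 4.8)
The plan is to reduce the commutation of the unbounded selfadjoint operators $P_U$ and $Q$ to a single operator identity on $H=L^{2}(\mu)$ by passing to their unitary one-parameter groups. First I would observe that $Q=I\otimes D$, where $D$ is the selfadjoint operator on $H$ diagonalized by the orthonormal basis $\left\{ e_{\lambda}\right\}$ via $De_{\lambda}=\lambda e_{\lambda}$; consequently its unitary group is $e\left(tQ\right)=I\otimes e\left(tD\right)$ with $e\left(tD\right)e_{\lambda}=e\left(t\lambda\right)e_{\lambda}$. Since two selfadjoint operators commute precisely when their unitary one-parameter groups commute (as recalled at the start of this section), it suffices to decide when $e\left(sP_{U}\right)e\left(tQ\right)=e\left(tQ\right)e\left(sP_{U}\right)$ for all $s,t\in\mathbb{R}$.

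Next I would insert the explicit formula for $e\left(sP_{U}\right)$ from Proposition \ref{sec-2-prop:formula-unitary-group}, namely $e\left(sP_{U}\right)g(x)=U^{\left\lfloor x+s\right\rfloor }g(\left\langle x+s\right\rangle )$. Applying both compositions to an arbitrary $f\in\mathscr{H}$ and evaluating at a.e.\ $x$, and using that $e\left(tQ\right)$ acts only on the $H$-component, the group commutation becomes
\[
U^{\left\lfloor x+s\right\rfloor }e\left(tD\right)f(\left\langle x+s\right\rangle )=e\left(tD\right)U^{\left\lfloor x+s\right\rfloor }f(\left\langle x+s\right\rangle ).
\]
The key reduction is to strip off the $x$-dependence: choosing $s$ and $x$ so that $\left\lfloor x+s\right\rfloor =1$ (for instance $s=\tfrac{1}{2}$ and $x\in[\tfrac{1}{2},1)$) and letting $f$ range over all of $\mathscr{H}$, so that $f(\left\langle x+s\right\rangle )$ exhausts $H$, this collapses to the clean identity $Ue\left(tD\right)=e\left(tD\right)U$ for every $t\in\mathbb{R}$. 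The remaining integer powers $U^{n}$ then commute with $e\left(tD\right)$ automatically.

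Finally I would translate this identity into the eigenfunction statement. That $U$ commutes with $e\left(tD\right)$ for all $t$ is equivalent, by the spectral theorem, to $U$ commuting with $D$; and since the elements of $\Lambda\subset\mathbb{R}$ are distinct, $D$ has simple point spectrum, so any operator commuting with $D$ must be diagonal in the basis $\left\{ e_{\lambda}\right\}$, that is, each $e_{\lambda}$ is an eigenfunction of $U$. For the converse, if $Ue_{\lambda}=e\left(c_{\lambda}\right)e_{\lambda}$ for each $\lambda$, then $U$ is diagonal in $\left\{ e_{\lambda}\right\}$, hence commutes with $D$ and with every $e\left(tD\right)$; since then $U^{n}$ also commutes with $e\left(tD\right)$ for all $n\in\mathbb{Z}$, running the computation of the second paragraph backwards reestablishes the group commutation, whence $P_{U}$ and $Q$ commute.

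I expect the main obstacle to be the middle step: rigorously justifying that the $x$- and $f$-dependence may be removed to produce a genuine operator identity on $H$, keeping careful track of the measure-preserving change of variable $x\mapsto\left\langle x+s\right\rangle$ and of the fact that the various values of $\left\lfloor x+s\right\rfloor$ are realized for suitable $s$, rather than any difficulty in the algebra, which is routine once the groups are in hand.
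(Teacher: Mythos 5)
Your proposal is correct and takes essentially the same route as the paper's proof: reduce commutation to the unitary groups, write $e\left(tQ\right)=I\otimes e\left(t\widetilde{Q}\right)$ with $\widetilde{Q}e_{\lambda}=\lambda e_{\lambda}$, extract the operator identity $Ue\left(t\widetilde{Q}\right)=e\left(t\widetilde{Q}\right)U$ on $H$, conclude diagonality of $U$ from the simplicity of the eigenvalues $\lambda\in\Lambda$, and verify the converse directly from the joint eigenfunctions $e_{\gamma\left(\lambda\right)+m}\otimes e_{\lambda}$. The one step you flag as the main obstacle --- stripping the $x$-dependence with $s=\tfrac{1}{2}$ and $x\in[\tfrac{1}{2},1)$ --- disappears entirely if you instead take $s=\beta-\alpha=1$, where $\left\lfloor x+1\right\rfloor =1$ and $\left\langle x+1\right\rangle =x$ for all $x$, which is exactly the paper's Lemma \ref{sec-2-lem:expH}, $e\left(P_{U}\right)=I\otimes U$.
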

\begin{proof}
Since $e(tQ)f\otimes e_{\lambda}=e(t\lambda)f\otimes e_{\lambda}$
for all $f\in L^{2}\left(\left[0,1\right]\right)$ and all $\lambda\in\Lambda$
it follows that 
\begin{equation}
e(tQ)=I\otimes e(t\widetilde{Q})\label{Sec-5-eq:random}
\end{equation}
where $\widetilde{Q}$ acting in $L^{2}(\mu)$ is determined by $\widetilde{Q}e_{\lambda}=\lambda e_{\lambda}$
for $\lambda\in\Lambda.$ 

By Lemma \ref{sec-2-lem:expH} $e(P)=I\otimes U,$ so it follows from
$e(tQ)e(P)=e(P)e(tQ)$ and (\ref{Sec-5-eq:random}) that 
\[
e(t\widetilde{Q})Ue_{\lambda}=e(t\lambda)Ue_{\lambda}.
\]
Consequently, $Ue_{\lambda}$ is a multiple of $e_{\lambda}$. 

The converse is trivial, see the proof of Theorem \ref{Sec-5-thm:spectral-square}.\end{proof}
\begin{rem}
\label{Sec-5:remark-spectrum} If $\gamma:\Lambda\to[0,1)$ is such
that $Ue_{\lambda}=e(\gamma(\lambda))e_{\lambda},$ then the functions
\[
e_{\gamma(\lambda)+m}\otimes e_{\lambda},m\in\mathbb{Z},\lambda\in\Lambda
\]
form an orthonormal basis for $L^{2}\left(\left[0,1\right]\right)\otimes L^{2}\left(\mu\right)$
consisting of joint eigenfunctions for $P_{U}$ and $Q$. Consequently,
the joint spectrum of $P_{U}$ and $Q$ is the closure of the set
of (joint) eigenvalues 
\begin{equation}
\Lambda_{\gamma}:=\left\{ \left(\begin{array}{c}
\gamma(\lambda)+m\\
\lambda
\end{array}\right)\mid m\in\mathbb{Z},\lambda\in\Lambda\right\} .\label{Sec-5-eq:Cantor-spectrum}
\end{equation}
\end{rem}
\begin{example}
Consider the Cantor set 
\[
C:=\left\{ \sum_{k=1}^{\infty}d_{k}4^{-k}\mid d_{k}\in\left\{ 0,3\right\} \right\} 
\]
and the set 
\[
\Lambda:=\left\{ \sum_{k=0}^{n}d_{k}4^{k}\mid d_{k}\in\left\{ 0,1\right\} \right\} .
\]
If $\mu$ is the measure determined by 
\[
\mu\left(\left[\sum_{k=1}^{n}d_{k}4^{-k},\sum_{k=1}^{n}d_{k}4^{-k}+4^{-n}\right]\right)=2^{-n}
\]
 for all $n\geq1$, where $d_{k}\in\{0,3\},$ then $C$ is the support
of $C$ and it was shown in \cite{JP98} that the functions $e_{\lambda},$
$\lambda\in\Lambda$ form an orthonormal basis for $L^{2}(\mu).$
The set $\Lambda$ is called a \emph{spectrum} of $\mu.$ If $\nu=m\otimes\mu,$
where $m$ is Lebesgue measure on the interval $[0,1]$, then it follows
from \cite{JP99} that 
\[
\left(\nu,\Lambda_{\gamma}\right)
\]
is a spectral pair, where $\Lambda_{\gamma}$ is determined by (\ref{Sec-5-eq:Cantor-spectrum}).
This, combined with Remark \ref{Sec-5:remark-spectrum}, gives an
explicit formula for the possible joint spectra of commuting pairs
$P_{U},Q$ in terms of the choice of a function $\gamma$ and the
spectrum $\Lambda$ of $\mu.$ Not all exponential basis for $L^{2}(\mu)$
are know, see the paper \cite{DHS09} and its references for constructions
of other exponential basis for $L^{2}(\mu).$ 
\end{example}
\appendix

\section{Selfadjoint Extensions\label{sec-A-:Selfadjoint-Extensions}}

Fix real numbers $\alpha<\beta$ and a Hilbert space $H.$ Consider
the Hilbert space 
\[
\mathscr{H}:=L^{2}\left([\alpha,\beta],H\right)=L^{2}\left(\left[\alpha,\beta\right]\right)\otimes H
\]
of $L^{2}-$functions $\left[\alpha,\beta\right]\to H$ equipped with
the inner product 
\[
\left\langle f\mid g\right\rangle :=\int_{\alpha}^{\beta}\left\langle f(x)\mid g(x)\right\rangle dx,
\]
where $\left\langle f(x)\mid g(x)\right\rangle $ is the inner product
in $H.$ We will consider selfadjoint restrictions of the operator
$P=P_{\max}$ determined by 
\begin{equation}
Pf:=\frac{1}{i2\pi}\frac{d}{dx}f=\frac{1}{i2\pi}f',\label{sec-A-eq:P-action}
\end{equation}
with the (maximal) domain 
\begin{equation}
\mathrm{dom}(P):=\left\{ f\in L^{2}\left([\alpha,\beta],H\right):f'\in L^{2}\left([\alpha,\beta],H\right)\right\} .\label{sec-A-eq:P-domain}
\end{equation}
Let $P_{\min}$ be the restriction of $P$ to the (minimal) domain
$\mathrm{dom}\left(P_{\min}\right):=C_{c}^{\infty}\left([\alpha,\beta]\right)\otimes H.$
Finally, let $P_{0}$ be the restriction of $P$ to the domain
\[
\mathrm{dom}\left(P_{0}\right):=\left\{ f\in\mathrm{dom}\left(P\right)\mid f(\alpha)=f(\beta)=0\right\} .
\]
Integrations by parts shows that $\left\langle P_{0}f\mid g\right\rangle =\left\langle f\mid P_{0}g\right\rangle $
for all $f,g$ in $\mathrm{dom}\left(P_{0}\right)$ and consequently
also $\left\langle P_{\min}f\mid g\right\rangle =\left\langle f\mid P_{\min}g\right\rangle $
for all $f,g$ in $\mathrm{dom}\left(P_{\min}\right).$ Hence, $P_{0}$
and $P_{\min}$ are densely defined symmetric operators in $L^{2}(\left[\alpha,\beta\right],H).$ 

Clearly, $P_{\min}$ is a restriction of $P_{0}.$ A consequence of
the next lemma is that $P_{\min}$ and $P_{0}$ have the same selfadjoint
extensions. 
\begin{lem}
\label{sec-A-lem:the-adjoint-of-P}We have 
\[
P_{\min}^{*}=P_{0}^{*}=P.
\]
Recall, $P=P_{\max}.$ \end{lem}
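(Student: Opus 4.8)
The plan is to establish both displayed equalities by a sandwich argument built on the operator inclusions $P_{\min}\subseteq P_{0}\subseteq P$. Since passing to adjoints reverses inclusions, $P_{\min}\subseteq P_{0}$ gives immediately $P_{0}^{*}\subseteq P_{\min}^{*}$. It therefore suffices to prove the two outer inclusions $P\subseteq P_{0}^{*}$ and $P_{\min}^{*}\subseteq P$: together with $P_{0}^{*}\subseteq P_{\min}^{*}$ these close the chain $P\subseteq P_{0}^{*}\subseteq P_{\min}^{*}\subseteq P$ and force all three operators to coincide with $P$.

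First I would prove $P\subseteq P_{0}^{*}$ by integration by parts. Fix $f\in\mathrm{dom}(P)$ and $g\in\mathrm{dom}(P_{0})$; both have $L^{2}$ derivatives, hence absolutely continuous representatives with well-defined endpoint values in $H$. Integrating $\langle P_{0}g\mid f\rangle$ by parts produces a boundary term that is a multiple of $\langle g(\beta)\mid f(\beta)\rangle-\langle g(\alpha)\mid f(\alpha)\rangle$, which vanishes because $g(\alpha)=g(\beta)=0$ by the definition of $\mathrm{dom}(P_{0})$. What remains is exactly $\langle g\mid Pf\rangle$, so $f\in\mathrm{dom}(P_{0}^{*})$ with $P_{0}^{*}f=Pf$. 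The same computation run with $g\in C_{c}^{\infty}([\alpha,\beta])\otimes H$ simultaneously yields $P\subseteq P_{\min}^{*}$, though we will not need this separately.

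Next I would prove the reverse inclusion $P_{\min}^{*}\subseteq P$ via distribution theory. Suppose $f\in\mathrm{dom}(P_{\min}^{*})$ and set $h:=P_{\min}^{*}f$, so that $\langle P_{\min}g\mid f\rangle=\langle g\mid h\rangle$ for every $g\in C_{c}^{\infty}([\alpha,\beta])\otimes H$. Unwinding $P_{\min}g=\tfrac{1}{i2\pi}g'$ and testing against all such compactly supported $g$, this identity says precisely that the distributional derivative of $f$ equals $i2\pi\,h$. Since $h\in L^{2}$, we conclude $f'\in L^{2}([\alpha,\beta],H)$, whence $f\in\mathrm{dom}(P)$ with $Pf=\tfrac{1}{i2\pi}f'=h$. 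This gives $P_{\min}^{*}\subseteq P$ and completes the sandwich.

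The one point requiring care, and the main technical obstacle, is the regularity input underlying the integration-by-parts step: that a function in $\mathrm{dom}(P)$, namely an $L^{2}$ $H$-valued function whose distributional derivative is again in $L^{2}$, admits an absolutely continuous representative with genuine pointwise traces $f(\alpha),f(\beta)\in H$ for which the boundary formula holds. This is the one-dimensional Sobolev embedding $W^{1,2}\hookrightarrow C$ in the Hilbert-space-valued setting, and it is exactly what makes the boundary conditions defining $\mathrm{dom}(P_{0})$ and the trace map $f\mapsto(f(\alpha),f(\beta))$ meaningful. Once this is available, every step above is a routine computation.
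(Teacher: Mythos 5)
Your proof is correct, and its two pillars are the same as the paper's: the inclusion $P\subseteq P_{0}^{*}$ (respectively $P\subseteq P_{\min}^{*}$) by integration by parts with the boundary term killed by $g(\alpha)=g(\beta)=0$, and the reverse inclusion by reading the adjoint relation against test functions as a statement about the weak derivative of $f$. Two organizational differences are worth recording. First, the paper proves both inclusions for $P_{\min}^{*}$ and then disposes of $P_{0}^{*}$ with the remark ``repeating this argument shows that $P_{0}^{*}=P$''; your sandwich $P\subseteq P_{0}^{*}\subseteq P_{\min}^{*}\subseteq P$, using that taking adjoints reverses $P_{\min}\subseteq P_{0}$, closes both equalities in one pass and eliminates that repetition --- a genuine, if small, streamlining. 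Second, where you invoke the distributional definition of $\mathrm{dom}(P)$ together with the vector-valued Sobolev embedding $W^{1,2}\hookrightarrow C$ (which you correctly identify as the crux), the paper proves this regularity inline: it sets $G(x):=\int_{\alpha}^{x}\bigl(P_{\min}^{*}f\bigr)(t)\,dt$, derives $\int_{\alpha}^{\beta}\bigl\langle h'(x)\mid-\tfrac{1}{i2\pi}f(x)+G(x)\bigr\rangle\,dx=0$ for all test functions $h$, and concludes by the du Bois-Reymond constancy lemma that $\tfrac{1}{i2\pi}f-G$ is constant, whence $f$ is an $H$-valued antiderivative with $f'\in L^{2}$. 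Your citation is legitimate --- $H$ being Hilbert, Bochner integration and the fundamental theorem of calculus behave as in the scalar case --- but note that the paper's explicit computation is not dead weight: the representation $f(x)=h+\int_{\alpha}^{x}Pf(t)\,dt$ it yields is quoted again in the proof of Proposition \ref{sec-A-prop:sa-extension-boundary} to make the traces $f(\alpha),f(\beta)$ well defined, so the hands-on version keeps the appendix self-contained where your version leans on an external standard fact.
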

\begin{proof}
Fix $f\in L^{2}\left([\alpha,\beta],H\right)$ with $f'\in L^{2}\left([\alpha,\beta],H\right).$
For $g\in C_{c}^{\infty}([\alpha,\beta])\otimes H$ integration by
parts yields 
\begin{align*}
\left\langle P_{\min}g\mid f\right\rangle  & =\tfrac{1}{i2\pi}\int_{\alpha}^{\beta}\left\langle g'(x)\mid f(x)\right\rangle dx\\
 & =-\tfrac{1}{i2\pi}\int_{\alpha}^{\beta}\left\langle g(x)\mid f'(x)\right\rangle dx,
\end{align*}
since $g(\alpha)=g(\beta)=0.$ Consequently, $f$ is in $\mathrm{dom}\left(P_{\min}^{*}\right)$
and $P_{\min}^{*}f=\frac{1}{i2\pi}f'$. 

Conversely, fix $f\in D\left(P_{\min}^{*}\right).$ Let $g:=P_{\min}^{*}f$
and $G(x):=\int_{\alpha}^{x}g(t)dt.$ For $h\in C_{c}^{\infty}([\alpha,\beta)\otimes H$
we have $\left\langle P_{\min}h\mid f\right\rangle =\left\langle h\mid P_{\min}^{*}f\right\rangle =\left\langle h\mid g\right\rangle ,$
hence integration by parts leads to 
\begin{align*}
\int_{\alpha}^{\beta}\tfrac{1}{i2\pi}\left\langle h'(x)\mid f(x)\right\rangle dx & =\int_{\alpha}^{\beta}\left\langle h(x)\mid g(x)\right\rangle dx\\
 & =-\int_{\alpha}^{\beta}\left\langle h'(x)\mid G(x)\right\rangle dx
\end{align*}
since $h(\alpha)=h(\beta)=0.$ Consequently, 
\[
\int_{\alpha}^{\beta}\left\langle h'(x)\left|\,\frac{-1}{i2\pi}f(x)+G(x)\right.\right\rangle dx=0,
\]
for all $h\in C_{c}^{\infty}([\alpha,\beta])\otimes H.$ It follows
that $-\tfrac{1}{i2\pi}f(x)+G(x)$ is constant. Using the definition
of $G$ we conclude $f'$ exists and 
\[
\frac{1}{i2\pi}f'=G'=g=P_{0}^{*}f.
\]
Hence $f$ is in $\mathrm{dom}\left(P\right)$ and $P_{\min}^{*}f=Pf.$ 

Repeating this argument shows that $P_{0}^{*}=P.$ 
\end{proof}
When working with the von Neumann parametrization of the selfadjoint
extensions of a symmetric operator, it is important to start with
a closed operator, hence the following lemma is important. 
\begin{lem}
\label{Sec-A-lem:P-is-closed}The closure $\overline{P_{\min}}$ of
$P_{\min}$ equals $P_{0},$ in particular, $P_{0}$ is closed. \end{lem}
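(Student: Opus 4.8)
The plan is to reduce the equality $\overline{P_{\min}}=P_{0}$ to the single assertion that $P_{0}$ is closed, and then to establish that closedness directly from a graph-norm bound on the boundary evaluations. Both $P_{\min}$ and $P_{0}$ are densely defined and symmetric, hence closable, with $\overline{P_{\min}}=P_{\min}^{**}$ and $\overline{P_{0}}=P_{0}^{**}$. By Lemma \ref{sec-A-lem:the-adjoint-of-P} we have $P_{\min}^{*}=P_{0}^{*}=P$; taking adjoints of this identity yields $P_{\min}^{**}=P_{0}^{**}$, that is, $\overline{P_{\min}}=\overline{P_{0}}$. Thus it suffices to show that $P_{0}$ is already closed, for then $\overline{P_{0}}=P_{0}$ and the claimed equality $\overline{P_{\min}}=P_{0}$ follows at once.

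The key analytic input is that the two boundary evaluation maps $f\mapsto f(\alpha)$ and $f\mapsto f(\beta)$, defined on $\mathrm{dom}(P)$, are continuous in the graph norm of $P$. To see this I would use that every $f\in\mathrm{dom}(P)$, i.e.\ with $f,f'\in L^{2}\left([\alpha,\beta],H\right)$, has an absolutely continuous representative for which the fundamental theorem of calculus holds, so that $f(\beta)=f(x)+\int_{x}^{\beta}f'(t)\,dt$ for every $x$. Averaging this identity over $x\in[\alpha,\beta]$ and estimating each resulting term by Cauchy--Schwarz gives
\[
\left\Vert f(\beta)\right\Vert _{H}\leq\left(\beta-\alpha\right)^{-1/2}\left\Vert f\right\Vert _{L^{2}}+\left(\beta-\alpha\right)^{1/2}\left\Vert f'\right\Vert _{L^{2}},
\]
and the same bound holds for $f(\alpha)$. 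This is the step I expect to be the main obstacle, since it is the only place where genuine regularity of Sobolev-type (vector-valued) functions is invoked; everything else is formal manipulation of adjoints.

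Finally I would combine this bound with the closedness of $P=P_{\max}$, which holds because $P=P_{\min}^{*}$ (Lemma \ref{sec-A-lem:the-adjoint-of-P}) and adjoints are always closed. Suppose $f_{n}\in\mathrm{dom}(P_{0})$ with $f_{n}\to f$ and $P_{0}f_{n}\to g$ in $\mathscr{H}$. Since $P_{0}\subset P$ and $P$ is closed, we get $f\in\mathrm{dom}(P)$ and $Pf=g$; in particular $f_{n}\to f$ in the graph norm of $P$. The boundary continuity just established then forces $f(\alpha)=\lim_{n}f_{n}(\alpha)=0$ and $f(\beta)=\lim_{n}f_{n}(\beta)=0$, so $f\in\mathrm{dom}(P_{0})$ and $P_{0}f=g$. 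Hence $P_{0}$ is closed, and the reduction in the first paragraph completes the proof.
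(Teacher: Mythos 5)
Your proof is correct, and its key step takes a genuinely different route from the paper's. The reduction is identical in both: Lemma \ref{sec-A-lem:the-adjoint-of-P} gives $P_{\min}^{*}=P_{0}^{*}=P$, hence $\overline{P_{\min}}=P_{\min}^{**}=P_{0}^{**}=\overline{P_{0}}$, so everything rests on $\overline{P_{0}}=P_{0}$. The paper establishes this by computing the double adjoint head-on: for $f\in\mathrm{dom}\left(P_{0}^{**}\right)$ it sets $g:=P_{0}^{**}f$ and $G(x):=\int_{\alpha}^{x}g(t)\,dt$, tests against $h\in\mathrm{dom}\left(P_{0}^{*}\right)$, exploits the freedom to add constants to $h$ to force $G(\alpha)=G(\beta)=0$, concludes that $\tfrac{1}{i2\pi}f-G$ is constant (so $P_{0}^{**}f=\tfrac{1}{i2\pi}f'$), and then kills the boundary values of $f$ through the vanishing of the boundary form $B(f,g)$ against functions $g$ prescribed arbitrarily at one endpoint. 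You instead prove closedness of $P_{0}$ directly: $P=P_{\max}$ is closed for free because $P=P_{\min}^{*}$ and adjoints are always closed, and your averaged fundamental-theorem-of-calculus bound $\left\Vert f(\beta)\right\Vert _{H}\leq\left(\beta-\alpha\right)^{-1/2}\left\Vert f\right\Vert _{L^{2}}+\left(\beta-\alpha\right)^{1/2}\left\Vert f'\right\Vert _{L^{2}}$ checks out (the derivative term in fact comes with the better constant $\tfrac{2}{3}\left(\beta-\alpha\right)^{1/2}$), so the trace maps $f\mapsto f(\alpha)$ and $f\mapsto f(\beta)$ are graph-norm continuous and the boundary conditions pass through graph limits. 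The one analytic ingredient you flag as the potential obstacle --- the absolutely continuous representative with $f(\beta)=f(x)+\int_{x}^{\beta}f'(t)\,dt$ --- is exactly what the proof of Lemma \ref{sec-A-lem:the-adjoint-of-P} already supplies (there $\tfrac{-1}{i2\pi}f+G$ is shown to be constant), and since that lemma precedes this one there is no circularity. As for what each approach buys: the paper's computation stays entirely within adjoint calculus and re-derives the regularity in the course of the proof, while yours is more modular and strictly more general, since graph-norm continuity of the traces shows at once that \emph{every} restriction of $P_{\max}$ by a closed condition on the pair $\left(f(\alpha),f(\beta)\right)$ is closed --- in particular each domain $\mathrm{dom}\left(P_{V}\right)$ of Proposition \ref{sec-A-prop:sa-extension-boundary}, a fact the paper obtains only indirectly from selfadjointness.
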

\begin{proof}
Using Lemma \ref{sec-A-lem:the-adjoint-of-P} we see that $\overline{P_{\min}}=P_{\min}^{**}=P^{*}=P_{0}^{**}=\overline{P_{0}}.$
Hence, it is sufficient to show that $P_{0}^{**}=P_{0}.$ Fix $f\in\mathrm{dom}\left(P_{0}^{**}\right).$
We must show $f\in\mathrm{dom}\left(P_{0}\right)$ and $P_{0}^{**}f=\tfrac{1}{i2\pi}f'.$
Let $g:=P_{0}^{**}f$ and $G(x):=\int_{\alpha}^{x}g(t)dt.$ For $h\in\mathrm{dom}\left(P_{0}^{*}\right)$
we have $\left\langle P_{0}^{*}h\mid f\right\rangle =\left\langle h\mid P_{0}^{**}f\right\rangle =\left\langle h\mid g\right\rangle .$
Hence integration by parts leads to
\begin{align*}
\int_{\alpha}^{\beta}\frac{1}{i2\pi}\left\langle h'(x)\left|\, f(x)\right.\right\rangle dx & =\int_{\alpha}^{\beta}\left\langle h(x)\left|\, g(x)\right.\right\rangle dx\\
 & =B(h,G)-\int_{\alpha}^{\beta}\left\langle h'(x)\left|\, G(x)\right.\right\rangle dx,
\end{align*}
where 
\[
B(h,G):=\left\langle h(\beta)\mid G(\beta)\right\rangle -\left\langle h(\alpha)\mid G(\alpha)\right\rangle .
\]
We can add a constant function $\phi(x)\equiv\psi$ to $h$ without
changing $h'.$ Hence, for such $\phi,$ 
\[
B(h,G)=B(h+\phi,G)=B(h,G)+B(\phi,G).
\]
 Consequently $B(\phi,G)=0$ for all constant functions $\phi(x)\equiv\psi.$
This means that 
\[
\left\langle \psi\mid G(\beta)-G)\alpha)\right\rangle =0
\]
for all $\psi\in H.$ So $G(\beta)=G(\alpha)=0.$ Now it follows,
as in the proof of the previous lemma, that $\tfrac{1}{i2\pi}f(x)-G(x)$
is constant in the $x$ variable, hence $f'$ exists and 
\[
\frac{1}{i2\pi}f'=G'=g=P_{0}^{**}f.
\]
In particular, $f'\in L^{2}\left([\alpha,\beta],H\right)$ and $P_{0}^{**}f=\tfrac{1}{i2\pi}f'.$

It remains to check that $f\left(\alpha\right)=f\left(\beta\right)=0.$
If $g\in\mathrm{dom}\left(P_{0}^{*}\right),$ then $\left\langle P_{0}^{*}g\mid f\right\rangle =\left\langle g\mid P_{0}^{**}f\right\rangle .$
This means that 
\[
\int_{\alpha}^{\beta}\left\langle g'(x)\left|\, f(x)\right.\right\rangle dx=-\int_{\alpha}^{\beta}\left\langle g(x)\left|\, f'(x)\right.\right\rangle dx.
\]
Integration by parts shows that the two sides of this equation differ
by
\[
B(f,g)=\left\langle g(\beta)\mid f(\beta)\right\rangle -\left\langle g(\alpha)\mid f(\alpha)\right\rangle .
\]
Hence, $B(f,g)=0$ for all $g\in\mathrm{dom}\left(P^{*}\right).$
Since, $\beta\neq\alpha,$ there are functions $g$ in $\mathrm{dom}\left(P^{*}\right)$
that are zero on one boundary point and an arbitrary element of $H$
on the other boundary point. Consequently, $f\left(\alpha\right)=f\left(\beta\right)=0.$ \end{proof}
\begin{lem}
\label{sec-A-lem:defect-spaces} The orthogonal complement of the
range of $P_{0}$ is the set of functions $f$ in $L^{2}(A)$ of the
form $f(x,y)=h(y)$ for some $h$ in $H.$ The orthogonal complement
of the range of $P_{0}\pm i$ is the set of all functions $f_{\pm}$
in $L^{2}\left([\alpha,\beta],H\right)$ such that $f_{\pm}(x)=\exp(\pm2\pi x)h,$
for $x$ in $[\alpha,\beta]$ and $h$ in $H.$ \end{lem}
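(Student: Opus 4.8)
The plan is to reduce all three orthogonal-complement computations to kernels of the maximal operator $P$, using the elementary identity $\left(\mathrm{range}\,T\right)^{\perp}=\ker\left(T^{*}\right)$, valid for any densely defined $T$. Since $P_{0}$ is densely defined and, by Lemma \ref{sec-A-lem:the-adjoint-of-P}, $P_{0}^{*}=P$, I would first record the adjoint identities $\left(P_{0}\pm i\right)^{*}=P_{0}^{*}\mp i=P\mp i$, the sign reversal coming from $\left(iI\right)^{*}=\overline{i}\,I=-iI$. This gives at once
\[
\left(\mathrm{range}\,P_{0}\right)^{\perp}=\ker P,\qquad\left(\mathrm{range}\left(P_{0}\pm i\right)\right)^{\perp}=\ker\left(P\mp i\right),
\]
so the whole lemma becomes the explicit description of these three kernels.

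For the first claim I would solve $Pf=0$. Any $f\in\mathrm{dom}(P)$ has $f'\in L^{2}\left(\left[\alpha,\beta\right],H\right)$, hence admits an absolutely continuous $H$-valued representative; then $\tfrac{1}{i2\pi}f'=0$ forces $f'=0$ a.e., and integrating (exactly as in the antiderivative step of Lemma \ref{sec-A-lem:the-adjoint-of-P}) shows $f(x)\equiv h$ for a fixed $h\in H$. Conversely every such constant lies in $\mathrm{dom}(P)$ with $Pf=0$. In the concrete situation $H=L^{2}\left([0,1]\right)$ this is precisely the set $f(x,y)=h(y)$, as stated.

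For the defect spaces I would solve the first-order equations describing $\ker\left(P\mp i\right)$, namely $Pf=\pm i f$, i.e. $f'\pm2\pi f=0$. The cleanest route is again the integrating-factor computation of Lemma \ref{sec-A-lem:the-adjoint-of-P}: multiplying by $e^{\pm2\pi x}$ gives $\tfrac{d}{dx}\left(e^{\pm2\pi x}f\right)=e^{\pm2\pi x}\left(f'\pm2\pi f\right)=0$, whence $e^{\pm2\pi x}f$ is a fixed element $h\in H$ and $f(x)=e^{\mp2\pi x}h$. This simultaneously produces the exponential solutions and shows there are no others, and every such $f$ is manifestly square integrable on the finite interval $\left[\alpha,\beta\right]$. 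Tracing back through the adjoint then identifies $\left(\mathrm{range}\left(P_{0}\pm i\right)\right)^{\perp}$ as the span of these one-parameter families of exponentials indexed by $h\in H$.

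The substantive analytic input is the vector-valued regularity fact that a distributional solution of $f'=cf$ with $f,f'\in L^{2}$ is exactly the classical exponential; this is already supplied by the integrating-factor argument cited above, so no new machinery is needed. The one point demanding genuine care is the sign bookkeeping: the shift $P_{0}\pm i$ passes, under the adjoint, to $\ker\left(P\mp i\right)$, whose solutions carry the exponent $e^{\mp2\pi x}$, so the exponent's sign comes out \emph{opposite} to that of the added $\pm i$. I would verify this chain of signs directly and match it against the stated form $f_{\pm}(x)=\exp(\pm2\pi x)h$, since a single misapplied conjugation would interchange the two defect subspaces.
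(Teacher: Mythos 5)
Your proposal is correct and takes essentially the same route as the paper's own (very terse) proof: identify each orthogonal complement as $\ker\left(P_{0}^{*}\mp i\right)=\ker\left(P\mp i\right)$ via Lemma \ref{sec-A-lem:the-adjoint-of-P}, then solve the resulting first-order ODEs by the integrating-factor argument. Your sign caution is well placed: the computation indeed gives $\left(\mathrm{range}\left(P_{0}\pm i\right)\right)^{\perp}=\left\{ e^{\mp2\pi x}h\mid h\in H\right\} $, so under the correlated reading the exponents in the lemma's statement are swapped, whereas Proposition \ref{sec-A-prop:sa-extensions-vN} (where $\mathscr{D}_{\pm}\left(P_{0}\right)$ consists of the functions $e^{\mp2\pi x}h$) is consistent with your result.
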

\begin{proof}
Suppose $f$ is in the orthogonal complement to the range of $P_{0}.$
Then 
\[
\left\langle P_{0}g\mid f\right\rangle =0
\]
for all $g\in D(P_{0}),$ hence $f\in D\left(P_{0}^{*}\right)$ and
$P_{0}^{*}f=0.$ By Lemma \ref{sec-A-lem:the-adjoint-of-P} $f'=0.$
Solving this differential equation gives the desired conclusion. The
calculation of the orthogonal complement of the range of $P_{0}\pm i$
is similar. \end{proof}
\begin{prop}
\label{sec-A-prop:sa-extensions-vN} The selfadjoint extensions of
$P_{0}$ are parametrized by the unitaries $V:H\to H$. The selfadjoint
extension $P_{V}$ of $P_{0}$ corresponding to the unitary $V$ is
the restriction of $P=P_{\max}$ whose domain $\mathrm{dom}\left(P_{V}\right)$
consists of the functions
\[
f(x)+e^{2\pi\left(\beta-x\right)}h+e^{2\pi\left(x-\alpha\right)}Vh,
\]
where $f\in\mathrm{dom}\left(P_{0}\right)$ and $h\in H.$ The action
of $P_{V}$ is 
\[
P_{V}\left(f(x)+e^{2\pi\left(\beta-x\right)}h+e^{2\pi\left(x-\alpha\right)}Vh\right)=\frac{1}{i2\pi}f'(x)+ie^{2\pi\left(\beta-x\right)}h-ie^{2\pi\left(x-\alpha\right)}Vh
\]
where $f$ and $h$ are as above. \end{prop}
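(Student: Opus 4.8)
The plan is to read the statement off von Neumann's theory of selfadjoint extensions. By Lemma~\ref{Sec-A-lem:P-is-closed} the operator $P_{0}$ is closed, densely defined and symmetric, and by Lemma~\ref{sec-A-lem:the-adjoint-of-P} its adjoint is $P_{0}^{*}=P=P_{\max}$. Hence the deficiency subspaces are $N_{\pm}:=\ker(P\mp i)$, and von Neumann's theorem (see, e.g., \cite{ReSi75}) asserts that the selfadjoint extensions of $P_{0}$ are in bijection with the unitaries $W\colon N_{+}\to N_{-}$: the extension $P_{W}$ attached to $W$ is the restriction of $P$ with domain $\{g+\phi+W\phi\mid g\in\mathrm{dom}(P_{0}),\ \phi\in N_{+}\}$ and action $P_{W}(g+\phi+W\phi)=P_{0}g+i\phi-iW\phi$. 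Thus the content of the proposition is to make $N_{\pm}$ explicit and to convert the unitaries $W$ into unitaries $V\colon H\to H$.

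For the first task I would solve $Pf=\pm if$ as in Lemma~\ref{sec-A-lem:defect-spaces}. A direct differentiation gives $P\bigl(e^{2\pi(\beta-x)}h\bigr)=i\,e^{2\pi(\beta-x)}h$ and $P\bigl(e^{2\pi(x-\alpha)}h\bigr)=-i\,e^{2\pi(x-\alpha)}h$, so $N_{+}$ consists exactly of the functions $x\mapsto e^{2\pi(\beta-x)}h$ and $N_{-}$ of the functions $x\mapsto e^{2\pi(x-\alpha)}h$, with $h$ ranging over $H$. Together with the action formula $P_{W}(g+\phi+W\phi)=P_{0}g+i\phi-iW\phi$ this already accounts for both exponential terms in the statement and for the signs $+i$ and $-i$ attached to them.

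For the second task I would introduce the bounded linear bijections $\iota_{+},\iota_{-}\colon H\to\mathscr{H}$ defined by $\iota_{+}h:=e^{2\pi(\beta-x)}h$ and $\iota_{-}h:=e^{2\pi(x-\alpha)}h$, which map $H$ onto $N_{+}$ and $N_{-}$ respectively. The key step --- and essentially the only computation in the argument --- is that the two embeddings have the same norm distortion: the substitutions $u=\beta-x$ and $u=x-\alpha$ give
\[
\|\iota_{+}h\|^{2}=\|h\|^{2}\int_{\alpha}^{\beta}e^{4\pi(\beta-x)}\,dx=\tfrac{1}{4\pi}\bigl(e^{4\pi(\beta-\alpha)}-1\bigr)\|h\|^{2}=\|h\|^{2}\int_{\alpha}^{\beta}e^{4\pi(x-\alpha)}\,dx=\|\iota_{-}h\|^{2}.
\]
Hence $\iota_{\pm}=c\,U_{\pm}$ for one common constant $c>0$ and unitaries $U_{\pm}\colon H\to N_{\pm}$, so $W\mapsto V:=\iota_{-}^{-1}W\iota_{+}=U_{-}^{*}WU_{+}$ is a bijection from the unitaries $W\colon N_{+}\to N_{-}$ onto the unitaries $V\colon H\to H$, and it satisfies $W\bigl(e^{2\pi(\beta-x)}h\bigr)=\iota_{-}Vh=e^{2\pi(x-\alpha)}Vh$.

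Finally I would substitute this last identity into the von Neumann domain and action formulas: with $\phi=e^{2\pi(\beta-x)}h$ a generic domain element $g+\phi+W\phi$ becomes $f(x)+e^{2\pi(\beta-x)}h+e^{2\pi(x-\alpha)}Vh$ (writing $f$ for the element $g\in\mathrm{dom}(P_{0})$), and $P_{W}(g+\phi+W\phi)=\tfrac{1}{i2\pi}f'+i\,e^{2\pi(\beta-x)}h-i\,e^{2\pi(x-\alpha)}Vh$, which are exactly the formulas in the statement. Since $V\mapsto W$ is a bijection, the resulting operators $P_{V}$ are precisely all selfadjoint extensions of $P_{0}$. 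The only point demanding genuine care is the equal-distortion identity above, since it is exactly what makes unitarity of $W$ equivalent to unitarity of $V$; the remainder is bookkeeping within von Neumann's formulas.
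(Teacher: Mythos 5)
Your proposal is correct and follows essentially the same route as the paper: von Neumann's extension theory applied to the closed symmetric operator $P_{0}$ (via Lemmas \ref{sec-A-lem:the-adjoint-of-P}, \ref{Sec-A-lem:P-is-closed}, and \ref{sec-A-lem:defect-spaces}), with the correspondence $W\bigl(e^{2\pi(\beta-x)}h\bigr)=e^{2\pi(x-\alpha)}Vh$ converting partial isometries between deficiency spaces into unitaries on $H$. Your equal-norm-distortion computation is exactly the paper's displayed identity $4\pi\|f_{+}\|^{2}=\bigl(e^{4\pi(\beta-\alpha)}-1\bigr)\|h\|_{H}^{2}$, spelled out slightly more explicitly via the embeddings $\iota_{\pm}$, so nothing is missing.
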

\begin{proof}
This is an application of the von Neumann index theory, see e.g.,
\cite{ReSi75} for an account of this theory. $P_{0}$ is densely
defined, since $C_{c}^{\infty}([\alpha,\beta])\otimes H\subseteq\mathrm{dom}(P_{0})$
and $P_{0}$ is closed by Lemma \ref{Sec-A-lem:P-is-closed}. The
deficiency spaces $\mathscr{D}_{\pm}\left(P_{0}\right):=\ker\left(P_{0}\mp iI\right)$
of $P_{0}$ are 
\begin{equation}
\mathscr{D}_{\pm}\left(P_{0}\right)=\left\{ f\in L^{2}\left([\alpha,\beta],H\right)\mid f(x)=\exp\left(\mp2\pi x\right)h\,,h\in H\right\} \label{sec-A-eq:defect-spaces}
\end{equation}
according to Lemma \ref{sec-A-lem:defect-spaces}. In particular,
$\dim\mathscr{D}_{+}\left(P_{0}\right)=\infty=\dim\mathscr{D}_{-}\left(P_{0}\right),$
and consequently, $P_{0}$ has selfadjoint extensions. 

By the von Neumann theory, the selfadjoint extensions of $P_{0}$
are parametrized by the partial isometries $W$ with initial space
$\mathscr{D}_{+}\left(P_{0}\right)$ and final space $\mathscr{D}_{-}\left(P_{0}\right).$
Specitically, the selfadjoint extension $P_{W}$ corresponding to
the partial isometry $W$ is the restriction of $P=P_{0}^{*}$ to
the domain 
\[
\mathrm{dom}\left(P_{W}\right):=\left\{ f+f_{+}+Wf_{+}\mid f\in\mathrm{dom}(P_{0}),f_{+}\in\mathscr{D}_{+}\left(P_{0}\right)\right\} .
\]
If $f_{+}(x):=e^{2\pi\left(\beta-x\right)}h$ and $f_{-}(x):=e^{2\pi\left(x-\alpha\right)}Vh$,
then
\[
4\pi\|f_{+}\|_{L^{2}\left([\alpha,\beta],H\right)}^{2}=\left(e^{4\pi(\beta-\alpha)}-1\right)\|h\|_{H}^{2}=4\pi\|Vh\|_{H}^{2}.
\]
The correspondance between $W$ and $V$ is determined by
\[
We^{2\pi\left(\beta-x\right)}h=e^{2\pi\left(x-\alpha\right)}Vh.
\]
The claims are now immediate. 
\end{proof}
Another way to describe the selfadjoint extensions of $P$ are through
boundary conditions.
\begin{prop}
\label{sec-A-prop:sa-extension-boundary} The selfadjoint extensions
of $P_{0}$ are parametrized by the unitaries $V:H\to H.$ The selfadjoint
extension $P_{V}$ of $P_{0}$ corresponding to the unitary $V$ is
the restriction of $P=P_{\max}$ with domain
\begin{equation}
\mathrm{dom}\left(P_{V}\right):=\left\{ f\in\mathrm{dom}\left(P\right)\mid f(\beta)=Vf(\alpha)\right\} .\label{Sec-A-eq:dom H_V}
\end{equation}
\end{prop}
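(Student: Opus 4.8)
The plan is to reconcile the boundary-condition description in the statement with the von Neumann description already established in Proposition~\ref{sec-A-prop:sa-extensions-vN}, organizing the argument in two stages: first that every operator $P_V$ defined by the boundary condition $f(\beta)=Vf(\alpha)$ is genuinely selfadjoint, and second that these operators exhaust all selfadjoint extensions of $P_0$.

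For the first stage I would work from the boundary form. By Lemma~\ref{sec-A-lem:the-adjoint-of-P}, every $f\in\mathrm{dom}(P)$ is an $H$-valued function with $L^{2}$ derivative, so the endpoint values $f(\alpha),f(\beta)\in H$ are well defined, and integration by parts gives
\[
\langle Pf\mid g\rangle-\langle f\mid Pg\rangle=\kappa\bigl(\langle f(\beta)\mid g(\beta)\rangle_{H}-\langle f(\alpha)\mid g(\alpha)\rangle_{H}\bigr)
\]
for a fixed nonzero scalar $\kappa$ and all $f,g\in\mathrm{dom}(P)$. If $f,g\in\mathrm{dom}(P_V)$, then $f(\beta)=Vf(\alpha)$ and $g(\beta)=Vg(\alpha)$, and unitarity of $V$ makes the right-hand side vanish; hence $P_V$ is symmetric, with $P_0\subseteq P_V\subseteq P=P_0^{*}$. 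To promote this to selfadjointness I would identify $\mathrm{dom}(P_V^{*})$: being a restriction of $P_0^{*}=P$, the adjoint $P_V^{*}$ acts as $P$, and $g$ lies in its domain precisely when the boundary form above vanishes against every $f\in\mathrm{dom}(P_V)$. Taking the affine interpolants $f(x)=a+\tfrac{x-\alpha}{\beta-\alpha}(Va-a)$, which satisfy $f(\alpha)=a$, $f(\beta)=Va$ and lie in $\mathrm{dom}(P_V)$, shows that $f(\alpha)$ ranges over all of $H$; substituting into the boundary form and using unitarity of $V$ forces $g(\beta)=Vg(\alpha)$, i.e.\ $g\in\mathrm{dom}(P_V)$. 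Thus $P_V^{*}=P_V$. The same interpolation also shows that $V\mapsto P_V$ is injective.

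For the second stage I would compare with Proposition~\ref{sec-A-prop:sa-extensions-vN}. Evaluating a general element $g(x)=f(x)+e^{2\pi(\beta-x)}h+e^{2\pi(x-\alpha)}Vh$ of the von Neumann domain at the endpoints, using $f(\alpha)=f(\beta)=0$, yields
\[
g(\alpha)=(cI+V)h,\qquad g(\beta)=(I+cV)h,\qquad c:=e^{2\pi(\beta-\alpha)}.
\]
Since $c>1$ exceeds the spectral radius of the unitary $V$, both $cI+V$ and $I+cV$ are boundedly invertible; hence $h\mapsto g(\alpha)$ is onto $H$ and $g(\beta)=\widetilde V\,g(\alpha)$ with $\widetilde V:=(I+cV)(cI+V)^{-1}$. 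The algebraic identity
\[
(I+cV^{*})(I+cV)=(1+c^{2})I+c(V+V^{*})=(cI+V^{*})(cI+V)
\]
gives $\widetilde V^{*}\widetilde V=I$, and invertibility of the two factors makes $\widetilde V$ unitary; solving $\widetilde V(cI+V)=I+cV$ for $V$ yields the explicit inverse $V=(\widetilde V-cI)^{-1}(I-c\widetilde V)$, so $V\mapsto\widetilde V$ is a bijection of the unitary group of $H$. Therefore the von Neumann extension attached to $V$ is exactly the boundary operator $P_{\widetilde V}$, and as $V$ runs over all unitaries so does $\widetilde V$; together with the first stage this identifies $\{P_V : V\text{ unitary}\}$ with the full family of selfadjoint extensions, proving the proposition.

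The step I expect to be the main obstacle is the matching in the second stage: one must check not merely that each boundary operator is selfadjoint (the routine part) but that the endpoint data of the von Neumann domain assemble into a genuine unitary boundary condition. Concretely, the work lies in verifying that $cI+V$ is invertible and that the Cayley-type transform $\widetilde V=(I+cV)(cI+V)^{-1}$ is unitary with unitary inverse map, so that the two parametrizations match bijectively rather than one merely being contained in the other.
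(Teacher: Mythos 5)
Your proof is correct, but it takes a different route from the one the paper actually prints. The paper's proof is short: after showing that the endpoint traces $f\mapsto f(\alpha)$, $f\mapsto f(\beta)$ are well defined on $\mathrm{dom}(P)$ (via $f(x)=h+\int_{\alpha}^{x}Pf(t)\,dt$), computing the boundary form $B(f,g)$, and noting the trace maps have dense range, it outsources the entire classification to a cited general theorem (Theorem 7.1.13 of the de Oliveira reference). You instead prove everything by hand: in your first stage you get symmetry of $P_{V}$ from unitarity of $V$ killing the boundary form, and then pin down $\mathrm{dom}(P_{V}^{*})$ using the affine interpolants $f(x)=a+\tfrac{x-\alpha}{\beta-\alpha}(Va-a)$ to make the boundary data $(a,Va)$ sweep out everything, forcing $g(\beta)=Vg(\alpha)$ and hence $P_{V}^{*}=P_{V}$; in your second stage you match the von Neumann domain of Proposition \ref{sec-A-prop:sa-extensions-vN} to a boundary condition via the operator M\"obius transform $\widetilde{V}=(I+cV)(cI+V)^{-1}$, $c=e^{2\pi(\beta-\alpha)}$, checking invertibility of $cI+V$ and $I+cV$ (since $c>1$), unitarity of $\widetilde{V}$ through the identity $(I+cV^{*})(I+cV)=(cI+V^{*})(cI+V)$, and the explicit inverse $V=(\widetilde{V}-cI)^{-1}(I-c\widetilde{V})$. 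Amusingly, this is exactly the alternative the authors sketch in the discussion \emph{after} their proof, where they derive $V_{B}=\left(1+e^{2\pi(\beta-\alpha)}V_{vN}\right)\left(e^{2\pi(\beta-\alpha)}+V_{vN}\right)^{-1}$ and its inverse and remark that the proposition ``could have been established without appealing to'' the cited theorem; you have fleshed that remark out into a complete argument. What each route buys: the paper's citation gives brevity; yours gives a self-contained proof, and your first stage alone already yields selfadjointness of each $P_{V}$ and injectivity of $V\mapsto P_{V}$ without any von Neumann machinery, which the paper's version does not isolate. The only point worth a sentence more in a final write-up is surjectivity of $V\mapsto\widetilde{V}$: either verify (by the symmetric identity $(I-c\widetilde{V}^{*})(I-c\widetilde{V})=(\widetilde{V}^{*}-cI)(\widetilde{V}-cI)$) that the inverse formula returns a unitary, or observe that it follows for free since every boundary operator $P_{W}$ is selfadjoint by your first stage, hence equals some von Neumann extension, hence $W=\widetilde{V}$ for some $V$ by your injectivity.
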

\begin{proof}
\textcolor{black}{If $f\in\mathrm{dom}\left(P\right),$ then we saw
in the proof of Lemma \ref{sec-A-lem:the-adjoint-of-P} that 
\[
f(x)=h+\int_{\alpha(y)}^{x}Pf(t)dt,
\]
for some $h$ in $H.$ In particular, $f(\alpha)$ and $f(\beta)$
are well-defined elements of $H.$ }Integration by parts shows that
for $f,g\in\mathrm{dom}\left(P\right)$ we have 
\[
\left\langle Pf\mid g\right\rangle =B(f,g)+\left\langle f\mid Pg\right\rangle 
\]
where 
\[
B(f,g):=\frac{1}{i2\pi}\left\langle f(\beta)\mid g(\beta)\right\rangle -\left\langle f(\alpha)\mid g(\alpha)\right\rangle .
\]
\textcolor{black}{Since $h$ is arbitrary in $H$, the maps 
\[
f\in\mathrm{dom}(P)\to f(\alpha)\in H\text{ and }f\in\mathrm{dom}(P)\to f(\beta)\in H
\]
have dense ranges. Consequently,} the result follows from \cite[Theorem 7.1.13]{dO09}. 
\end{proof}
Let $V_{vN}$ be the unitary from Proposition \ref{sec-A-prop:sa-extensions-vN}
and let $V_{B}$ be the unitary from Proposition \ref{sec-A-prop:sa-extension-boundary}. 

The function 
\[
g(x)=f(x)+e^{2\pi\left(\beta-x\right)}h+e^{2\pi\left(x-\alpha\right)}V_{vN}h,
\]
from Proposition \ref{sec-A-prop:sa-extensions-vN} has boundary values
$f(\alpha)=e^{2\pi(\beta-\alpha)}h+V_{nN}h$ and $f(\beta)=h+e^{2\pi(\beta-\alpha)}V_{vN}h.$
Hence, $h=\left(e^{2\pi(\beta-\alpha)}+V_{vN}\right)^{-1}f(\alpha)$
and $f(\beta)=\left(1+e^{2\pi(\beta-\alpha)}V_{vN}\right)h.$ It follows
that
\[
V_{B}=\left(1+e^{2\pi(\beta-\alpha)}V_{vN}\right)\left(e^{2\pi(\beta-\alpha)}+V_{vN}\right)^{-1}.
\]
Conversely, 
\[
V_{vN}=\left(e^{2\pi(\beta-\alpha)}-V_{B}\right)^{-1}\left(e^{2\pi(\beta-\alpha)}V_{B}-1\right).
\]
Hence, Proposition \ref{sec-A-prop:sa-extensions-vN} and Proposition
\ref{sec-A-prop:sa-extension-boundary} are, in fact, equivalent.
In particular, we could have established Proposition \ref{sec-A-prop:sa-extension-boundary}
without appealing to \cite[Theorem 7.1.13]{dO09}.

\section{Questions\label{sec-B:Questions}}
\begin{problem}
A set $\Lambda$ is a tiling set for the square $\left[0,1\right]^{2}$
if $\bigcup_{\lambda\in\Lambda}\left(\lambda+\left[0,1\right]^{2}\right)=\mathbb{R}^{2}$
and the overlaps are null sets. It is known that $\Lambda$ is the
joint spectrum for some commuting extensions $P,Q$ as in Theorem
\ref{Sec-5-thm:spectral-square} if and only if $\Lambda$ is a tiling
set for the square, see \cite{JP99}, \cite{IP98}, \cite{LRW00}.
In case (\ref{Sec-5-eq:1}) with $\alpha=0$ and $\beta_{m}=\left\langle rm\right\rangle $
for some $r\in\mathbb{R}$ we see that both $U$ and $V$ are determined
by the geometric boundary conditions from Remark \ref{Sec-3-remark:Geometric-Boundary-Condition}.
I might be of interest to investigate the relationship between geometric
boundary conditions and the boundary unitary operators $U$ and $V$
in more detail. 
\begin{problem}
Suppose $(\mu,\nu)$ is a spectral pair. Then $f(x)=\int\widehat{f}(\lambda)e(\lambda x)d\nu(\lambda).$
Let 
\begin{equation}
e\left(bQ\right)f(x):=\int e\left(b\lambda\right)\widehat{f}(\lambda)e(\lambda x)d\nu(\lambda).\label{Appendix-eq:1}
\end{equation}
 Then $e\left(bQ\right)f(x)=f(x+b)$ for a.e. $x$ such that $x+b$
is in the support of $\mu.$ So, ignoring null sets, if $b_{n}\to0$
and $x+b_{n}$ is in the support of $\mu$ for all $n,$ then 
\begin{equation}
\frac{e\left(b_{n}Q\right)f(x)-f(x)}{b_{n}}\to f'(x)\label{Appendix-eq:2}
\end{equation}
but the limit also equals $i2\pi Qf(x).$ Hence, $Q$ determined by
$(\ref{Appendix-eq:1})$ can be thought of as a selfadjoint realization
of $\tfrac{1}{i2\pi}\tfrac{d}{dx}$ in $L^{2}(\mu).$ Theorem \ref{Sec-5-thm:Strip}
considers the case where $\mu$ is Lebesgue measure on the real line
and Theorem \ref{Sec-5-thm:Fractal} a certain restriction of $1/2-$dimensional
Hausdorff measure. A common generalization of these two cases is: \end{problem}
\begin{thm*}
Suppose $(\mu,\nu)$ is a spectral pair of measures on $\mathbb{R}$
and $Q$ is determined by $(\ref{Appendix-eq:1})$. Let $\mathscr{H}:=L^{2}\left(\left[0,1\right]\right)\otimes L^{2}(\mu),$
let $U$ be a boundary unitary in $L^{2}\left(\mu\right)$ and let
$P_{U}$ be the corresponding selfadjoint extension of $P_{0}.$ Then
$P_{U}$ and $I\otimes Q$ commute if and only if 
\[
Uf(x)=\int e(\gamma(\lambda))\widehat{f}(\lambda)e(\lambda x)d\nu(\lambda)
\]
 for some $\nu-$measurable function $\gamma:\mathbb{R}\to[0,1).$ 
\end{thm*}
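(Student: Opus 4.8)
The plan is to pass from the two unbounded selfadjoint operators to their one-parameter unitary groups and to diagonalize $Q$ through the spectral-pair Fourier transform $F\colon L^2(\mu)\to L^2(\nu)$, $Ff=\widehat f$. Because $(\mu,\nu)$ is a spectral pair, $F$ is unitary, and the defining relation (\ref{Appendix-eq:1}) reads exactly $F\,e(tQ)\,F^{-1}=M_{e(t\lambda)}$, multiplication by $e(t\lambda)$ on $L^2(\nu)$; in particular $Q=F^{-1}M_{\lambda}F$ is selfadjoint, with $M_{\lambda}$ multiplication by the coordinate. Hence $e\left(t(I\otimes Q)\right)=I\otimes e(tQ)$ acts only in the second tensor factor. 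By the criterion recalled at the opening of Section \ref{Sec-5-sec:Spectral-Pairs}, $P_U$ and $I\otimes Q$ commute if and only if $e(sP_U)$ commutes with $I\otimes e(tQ)$ for all $s,t\in\mathbb{R}$, so I would work entirely with the groups.

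For the forward implication, assume the groups commute. Choosing $s=\beta-\alpha=1$ and using Lemma \ref{sec-2-lem:expH}, which gives $e(P_U)=I\otimes U$, yields $(I\otimes U)(I\otimes e(tQ))=(I\otimes e(tQ))(I\otimes U)$, that is $U\,e(tQ)=e(tQ)\,U$ for every $t$. Conjugating by $F$, the unitary $\widetilde U:=FUF^{-1}$ commutes with $M_{e(t\lambda)}$ for all $t$, hence with the Stone generator $M_{\lambda}$ of this group. Since the von Neumann algebra generated by $M_{\lambda}$ is the multiplication MASA $\{M_\phi:\phi\in L^\infty(\nu)\}$, its commutant is that same MASA, so $\widetilde U=M_{e(\gamma)}$ for some $\nu$-measurable $\gamma\colon\mathbb{R}\to[0,1)$, where the modulus-one phase $e(\gamma)$ comes from unitarity of $\widetilde U$. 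Undoing the conjugation with the inversion $F^{-1}g(x)=\int g(\lambda)e(\lambda x)\,d\nu(\lambda)$ gives precisely $Uf(x)=\int e(\gamma(\lambda))\widehat f(\lambda)e(\lambda x)\,d\nu(\lambda)$.

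For the converse, suppose $U$ has the stated form, so that $\widetilde U=FUF^{-1}=M_{e(\gamma)}$ commutes with every $M_{e(t\lambda)}$; thus $U\,e(tQ)=e(tQ)\,U$, and taking adjoints as well gives $U^{n}e(tQ)=e(tQ)U^{n}$ for all $n\in\mathbb{Z}$ and $t\in\mathbb{R}$. I would then invoke Proposition \ref{sec-2-prop:formula-unitary-group}, by which $e(sP_U)f(x)=U^{\lfloor x+s\rfloor}f(\langle x+s\rangle)$, and compute, using that $e(tQ)$ acts only in the $L^2(\mu)$ variable,
\[
\bigl(I\otimes e(tQ)\bigr)e(sP_U)f(x)=e(tQ)\,U^{\lfloor x+s\rfloor}f(\langle x+s\rangle)=U^{\lfloor x+s\rfloor}e(tQ)f(\langle x+s\rangle)=e(sP_U)\bigl(I\otimes e(tQ)\bigr)f(x),
\]
the middle step being the commutation of $U^{\lfloor x+s\rfloor}$ with $e(tQ)$. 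Hence the groups commute, and so do $P_U$ and $I\otimes Q$.

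The hard part will be the multiplier step in the forward direction: identifying the commutant of the one-parameter group $\{M_{e(t\lambda)}\}_t$ on $L^2(\nu)$ with the full multiplication MASA. This is the abstract substitute for the classical Fourier-multiplier theorem \cite[Theorem 3.16]{SW71} used in Theorem \ref{Sec-5-thm:Strip}; for a general spectral-pair measure $\nu$ it is cleanest to argue through the Stone generator $M_{\lambda}$ and the fact that $\{M_{\lambda}\}'=L^\infty(\nu)$. The Lebesgue case recovers Theorem \ref{Sec-5-thm:Strip} and the atomic case recovers Theorem \ref{Sec-5-thm:Fractal}, where the MASA conclusion specializes to the statement that the exponentials $e_\lambda$ are eigenfunctions of $U$.
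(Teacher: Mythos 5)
Your proof is correct, and it is worth pointing out that the paper itself never proves this statement: it appears in Appendix \ref{sec-B:Questions} as an unproved ``common generalization'' of Theorems \ref{Sec-5-thm:Strip} and \ref{Sec-5-thm:Fractal}, so the only fair comparison is with the proofs of those two special cases. Your forward direction is the same argument as in Theorem \ref{Sec-5-thm:Strip}: reduce to the unitary groups, set $s=1$ and use Lemma \ref{sec-2-lem:expH} to get $Ue(tQ)=e(tQ)U$ for all $t$, then identify all operators commuting with the group $e(tQ)$. What you do differently there is to replace the citation of \cite[Theorem 3.16]{SW71} --- which is tied to Lebesgue measure and the ordinary translation group --- by the abstract commutant computation: conjugation by the spectral-pair transform $F$ turns $e(tQ)$ into $M_{e(t\lambda)}$ on $L^{2}(\nu)$, the von Neumann algebra generated by this group is the full multiplication algebra (via the Stone generator $M_{\lambda}$ and its spectral projections), and maximal abelianness forces $FUF^{-1}=M_{e(\gamma)}$. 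This is exactly the abstraction the general statement requires; the one point you leave tacit is that maximal abelianness of $L^{\infty}(\nu)$ acting on $L^{2}(\nu)$ needs $\nu$ to be $\sigma$-finite (or at least localizable), which does hold for spectral-pair measures on $\mathbb{R}$ but deserves a sentence. Your converse is genuinely different from, and more robust than, either special case in the paper: the converse of Theorem \ref{Sec-5-thm:Strip} matches Fourier coefficients of two measures and then ``integrates $e(s\lambda)$'' against the boundary spectral measure --- a step that, as written there, identifies $e(sP)$ with $\int_{[0,1)}e(s\lambda)\,\bigl(I\otimes E_{U}\bigr)(d\lambda)$, which is only a function of $I\otimes U$, and really needs the spectral formula of Theorem \ref{Sec-4-thm:Spectral-thm-for-P} to finish --- while Theorem \ref{Sec-5-thm:Fractal} disposes of the converse via joint eigenfunctions, unavailable when $\nu$ has continuous part. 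Your direct computation with the explicit group formula of Proposition \ref{sec-2-prop:formula-unitary-group}, using only that $e(tQ)$ commutes fibrewise with the fixed integer power $U^{\lfloor x+s\rfloor}$ for a.e.\ $x$, handles all cases uniformly and simultaneously repairs that wobble; it buys a single argument covering the strip, the fractal, and everything in between.
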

Is there a way to generalize this to also include Theorem \ref{Sec-5-thm:spectral-square}
as a special case? 
\end{problem}

\section*{Acknowledgement}

The co-authors thank Palle Jorgensen for many useful conversations
related to this paper. \vfill\pagebreak

\bibliographystyle{amsalpha}
\bibliography{square}

\end{document}